\DeclareRobustCommand{\SkipTocEntry}[5]{}
\newtheorem*{thm*}{Theorem}
\newtheorem{thm}{Theorem}[section]{\bf}{\it}
\newtheorem{prop}[thm]{Proposition}
\newtheorem{lemma}[thm]{Lemma}
\newtheorem{cor}[thm]{Corollary}
\theoremstyle{definition}
\newtheorem{dfn}[thm]{Definition}
\theoremstyle{remark}
\newtheorem{rmk}[thm]{Remark}
\theoremstyle{remark}
\newtheorem{exm}[thm]{Example}
\newtheorem{assu}[thm]{Assumption}
\newcommand{\B}{\mathbb{B}}
\newcommand{\C}{\mathbb{C}}
\newcommand{\HH}{\mathbb{H}}
\newcommand{\LL}{\mathbb{L}}
\newcommand{\N}{\mathbb{N}}
\newcommand{\Q}{\mathbb{Q}}
\newcommand{\RR}{\mathbb{R}}
\newcommand{\R}{\mathbb{R}}
\newcommand{\Z}{\mathbb{Z}}
\newcommand{\cat}{\mathbf{C}}
\newcommand{\catD}{\mathbf{D}}
\newcommand{\catT}{\mathbf{T}}
\newcommand{\ra}{\rightarrow}
\newcommand{\mcF}{\mathcal{F}}
\newcommand{\mcG}{\mathcal{G}}
\newcommand{\mcO}{\mathcal{O}}
\newcommand{\mcU}{\mathcal{U}}
\newcommand{\mcV}{\mathcal{V}}
\newcommand{\mcX}{\mathcal{X}}
\newcommand{\MW}{\mathcal{MW}}
\newcommand{\del}{\partial}
\newcommand{\adj}[4]{#1\negmedspace: #2\rightleftarrows #3:\negmedspace #4}
\DeclareMathOperator{\an}{an}
\DeclareMathOperator{\Aff}{Aff}
\DeclareMathOperator{\AffSm}{AffSm}
\DeclareMathOperator{\Berk}{Berk}
\DeclareMathOperator{\car}{char}
\DeclareMathOperator{\cp}{cp}
\DeclareMathOperator{\ct}{ct}
\DeclareMathOperator{\dR}{dR}
\DeclareMathOperator{\eff}{eff}
\DeclareMathOperator{\et}{\acute{e}t}
\DeclareMathOperator{\Ev}{Ev}
\DeclareMathOperator{\FormDA}{\bf{FormDA}}
\DeclareMathOperator{\Frac}{Frac}
\DeclareMathOperator{\gm}{gm}
\DeclareMathOperator{\Hom}{Hom}
\DeclareMathOperator{\id}{id}
\DeclareMathOperator{\im}{Im}
\DeclareMathOperator{\Int}{Int}
\DeclareMathOperator{\odR}{odR}
\DeclareMathOperator{\rig}{rig}
\DeclareMathOperator{\Rig}{Rig}
\DeclareMathOperator{\RigSm}{RigSm}
\DeclareMathOperator{\RHom}{Hom_\bullet}
\DeclareMathOperator{\Sing}{Sing}
\DeclareMathOperator{\Sm}{Sm}
\DeclareMathOperator{\Spa}{Spa}
\DeclareMathOperator{\Spec}{Spec}
\DeclareMathOperator{\Sus}{Sus}
\DeclareMathOperator{\Tot}{Tot}
\DeclareMathOperator{\uhom}{\underline{Hom}}
\DeclareMathOperator{\Ch}{\bf{Ch}}
\DeclareMathOperator{\DA}{\bf{DA}}
\DeclareMathOperator{\FSH}{\bf{FSH}}
\DeclareMathOperator{\FormSm}{{FormSm}}
\DeclareMathOperator{\LRS}{\bf{LRS}}
\DeclareMathOperator{\Mod}{\bf{-Mod}}
\DeclareMathOperator{\RigDA}{\bf{RigDA}}
\DeclareMathOperator{\Psh}{\bf{Psh}}
\DeclareMathOperator{\Sh}{\bf{Sh}}
\DeclareMathOperator{\SH}{\bf{SH}}
\DeclareMathOperator{\SSpect}{\bf{Spt}}
\begin{document}
	\title{The Monsky-Washnitzer and the overconvergent realizations}
	\author{Alberto Vezzani} 
	\address{%
		LAGA, Universit\'e Paris 13\\
		99 avenue Jean-Baptiste Cl\'ement \\
		93430 Villetaneuse, France}
	\email{vezzani@math.univ-paris13.fr}
	\begin{abstract}   
		We construct the dagger realization functor for analytic motives over non-archimedean fields of mixed characteristic, as well as the Monsky-Washnitzer realization functor for algebraic motives over a discrete field of positive characteristic. In particular,  the motivic language on the classic \'etale site    provides a new direct definition of the overconvergent de Rham cohomology and rigid cohomology and shows that their finite dimensionality follows formally from  one of Betti cohomology for smooth projective complex varieties. 
	\end{abstract}

	\maketitle

	%
	%
	%
	%

	\tableofcontents

	\section{Introduction}
	
	The problem of the definition of a well-behaved cohomology theory ``\`a la de Rham'' for  analytic varieties over a non-archimedean field lies on the pathological properties of the de Rham complex in this context. Even though it behaves as expected when applied to proper varieties, or analytifications of algebraic varieties, its (hyper)cohomology computed on very basic affinoid smooth rigid varieties  (such as the closed disc $\B^1$) is oddly infinite dimensional. This is related to the impossibility to integrate general holomorphic rigid forms while preserving their radius of convergence. 
	
	This classical problem has been studied and positively resolved by  different authors (see \cite{fvdp}, \cite{gk-dR}, \cite{mw-fc1}). The common strategy is to consider the (hyper)cohomology of an alteration of the de Rham complex, namely the  \emph{overconvergent} complex $\Omega^{\dagger}$ that is, the subcomplex of those forms which can be extended on a ``strict neighborhood'' of the variety. In order to  give sense to this definition one needs first to consider an affinoid variety and endow it with an overconvergent structure, which amounts to embedding it in the interior of a bigger one. Secondly, one has to prove than two different choices would induce two canonically equivalent cohomology groups, and that this definition can be extended functorially to arbitrary varieties.

	The technical tool which is behind these facts is  the rigid version of Artin's approximation theorem proved by Bosch \cite{bosch-artin} stating that any map of varieties can be approximated with a new one preserving two given overconvergent structures, combined with a homotopic argument. In this article, we follow the approach of Gro\ss e-Kl\"onne \cite{gk-over} and \cite{gk-dR} and we  give a motivic version of this procedure which can be stated as follows (see Theorem \ref{main}):
	
	\begin{thm*}
		For any ring of coefficients $\Lambda$ the canonical functor $l\colon \RigSm^\dagger\!/K\ra\RigSm/K$ from smooth rigid varieties with an overconvergent structure to rigid varieties  induces a monoidal, triangulated equivalence of the associated categories of motives:
		$$
		\LL l^*\colon \RigDA^{\dagger\eff}_{\et}(K,\Lambda)\cong\RigDA^{\eff}_{\et}(K,\Lambda)\colon \RR l_*.
		$$
	\end{thm*}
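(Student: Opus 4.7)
The plan is to establish the pair $(\LL l^*, \RR l_*)$ as a Quillen adjunction at the model level and then verify that both the unit and counit become invertible after passing to the localized motivic categories. Since $l\colon \RigSm^\dagger\!/K\to\RigSm/K$ commutes with fibre products and sends étale covers to étale covers, it is continuous and cocontinuous for the étale topology; moreover $l$ carries the object $\B^1_\dagger$ to $\B^1$, so the induced presheaf adjunction descends along the $(\et,\B^1)$-localization to a well-defined adjoint pair between $\RigDA^{\dagger\eff}_{\et}(K,\Lambda)$ and $\RigDA^{\eff}_{\et}(K,\Lambda)$, which is automatically monoidal.

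Both target categories are compactly generated by motives of smooth affinoid varieties (with, resp.\ without, a choice of dagger structure), so it suffices to check the unit and counit on these generators. For the counit $\LL l^*\RR l_* M(X)\to M(X)$, I would use the classical fact (Gro\ss e-Kl\"onne) that every smooth affinoid $X$ admits a dagger presentation $X^\dagger$ refining it, obtained by embedding $X$ into the interior of a larger affinoid; this produces a section $M(X)\to \LL l^* M(X^\dagger)$ in the rigid motivic category. The independence of this section from the chosen dagger structure is the heart of the matter: given two presentations $X^\dagger_1$ and $X^\dagger_2$, Bosch's rigid Artin approximation theorem allows one to approximate the identity of $X$ by morphisms $X^\dagger_1\to X^\dagger_2$ (and vice versa) in the dagger category, and the discrepancy between the approximation and the original map is absorbed by a $\B^1$-homotopy constructed from a radius-shrinking parameter. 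Iterating this procedure and taking the induced map on motives yields canonical isomorphisms between all dagger liftings of $M(X)$ in $\RigDA^{\dagger\eff}_{\et}$.

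For the unit $M(X^\dagger)\to\RR l_*\LL l^* M(X^\dagger)$, a symmetric argument applies: one computes $\RR l_*\LL l^* M(X^\dagger)$ using a dagger-fibrant replacement of $l(X^\dagger)$ and then uses the same Bosch approximation plus $\B^1$-homotopy machinery to identify it with $M(X^\dagger)$. Passing from smooth affinoids to general smooth rigid (resp.\ dagger) varieties is then a formal consequence of étale descent together with the fact that both $\LL l^*$ and $\RR l_*$ commute with the relevant colimits, $\LL l^*$ because it is a left adjoint of a Quillen pair, and $\RR l_*$ because $l$ is, in addition, cocontinuous so that $l_*$ preserves étale hypercovers up to the implicit $\B^1$-localization.

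The main obstacle, and the point that requires the most care, is the coherent globalization of the local approximation-homotopy argument. One has to produce the $\B^1$-homotopies fitting Bosch's approximations into a simplicial/hypercover framework that is compatible with the étale topology used to define $\RigDA_{\et}$ and with the $\B^1$-localization, without destroying the functoriality of the assignment $X\mapsto X^\dagger$. In practice this is handled by working at the level of representable presheaves, replacing strict commutativity of diagrams of dagger varieties by $\B^1$-homotopy coherence, and invoking the universal property of the motivic localization to upgrade these homotopy-coherent data to honest isomorphisms in $\RigDA^{\dagger\eff}_{\et}(K,\Lambda)$.
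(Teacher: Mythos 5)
Your overall architecture (a Quillen adjunction at the level of localized model categories, reduction to compact generators, then an approximation-plus-homotopy argument on affinoids) matches the paper's, and the formal reductions you describe are essentially right. Two remarks on the formal part: once one knows that $\LL l^*$ carries a set of compact generators of $\RigDA^{\dagger\eff}_{\et}$ to a set of compact generators of $\RigDA^{\eff}_{\et}$ (Proposition \ref{genRigDA} together with Corollaries \ref{etaleoverball} and \ref{genRigDArig}), only the unit $\id\Rightarrow\RR l_*\LL l^*$ needs to be checked; invertibility of the counit is then automatic by \cite[Lemma 1.3.32]{ayoub-rig}, so your separate counit discussion is unnecessary. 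Moreover that discussion leans on the claim that \emph{every} smooth affinoid admits a dagger structure, which is neither needed nor established in that generality: the paper only uses affinoids \'etale over poly-discs, which do admit dagger structures by Corollary \ref{etaleoverball} and which suffice as generators.

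The genuine gap is precisely the step you flag as ``the main obstacle'' and then dispatch with a sentence. After localizing on the base and using $\RR l_*=l_*$ (Proposition \ref{Dff2}), verifying the unit on a generator reduces to showing that $(\Sing^{\B^{1\dagger}}\Lambda(Y))(X)\ra(\Sing^{\B^{1}}\Lambda(\hat{Y}))(\hat{X})$ is a quasi-isomorphism (Proposition \ref{qiso}). Because elements of the normalized cubical complex are $\Lambda$-linear combinations of morphisms $\hat{X}\times\B^{n}\ra\hat{Y}$, one must approximate a finite \emph{family} of such maps by overconvergent ones through explicit $\B^1$-homotopies that are compatible with the cubical face maps: whenever $f_k\circ d_{r,\epsilon}=f_{k'}\circ d_{r,\epsilon}$ the corresponding homotopies must agree on that face, and they must be constant on faces that are already overconvergent (Proposition \ref{sollevacocu}, resting on the quantitative lifting of Proposition \ref{2dH=02}). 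Bosch's approximation theorem only gives closeness of a single approximant; it produces neither the connecting homotopy you invoke nor these face compatibilities, and appealing to ``homotopy coherence'' and ``the universal property of the motivic localization'' does not construct them — without them one cannot conclude that cycles and boundaries of the normalized complexes correspond, which is the whole content of surjectivity and injectivity in Proposition \ref{qiso}. This is why the paper deliberately replaces Artin--Bosch approximation by an argument based on the analytic inverse function theorem (Proposition \ref{implicitC}): it yields the required cubical compatibilities directly, and it also avoids the hypotheses ($\car K=0$, or $\car K=p$ with $[K:K^p]<\infty$) under which the Bosch-type approximation is available, so the theorem holds for an arbitrary non-archimedean base field $K$.
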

	
	We will actually prove a relative statement, where the base dagger variety is not necessarily the spectrum of the field $K$. We remark that, in particular, it is possible to define the spectrum of the overconvergent de Rham cohomology as the motive $\LL l^*\Omega^{\dagger}$. %
	
	The main technical ingredient for the proof is Proposition \ref{sollevacocu} which provides a way to approximate maps of rigid analytic varieties having an overconvergent structure, with maps that preserve such structures. It is reminiscent of (a cubical version of) Artin's approximation lemma, but completely independent of it. 
	
	We can apply the previous theorem to give a new definition of rigid cohomology, which is a good cohomological theory ``\`a la de Rham'' for algebraic varieties $X$ over a discrete field $k$ of positive characteristic. The idea, due to Monsky and Washnitzer \cite{mw-fc1} is to find (whenever possible) a smooth formal model $\mcX$ of  $X$ over the ring of integers $ K^\circ$ of a mixed-characteristic valued field $K$ with residue $k$, and then consider the overconvergent de Rham cohomology of the associated generic fiber $\mcX_\eta$. Also in this case, the major task which is solved in the  literature consists in proving that this definition does not depend on the various choices made at each step, as well as in generalizing it to arbitrary varieties. Classically, the tools which are used include the   convergent site of Ogus \cite{ogus-ct} and the crystalline site of Berthelot \cite{berth-cris}  for proper varieties, or the overconvergent site developed by Le Stum \cite{lestum-os}.

	Using the motivic language, these problems are alternatively  solved by the following remark: a theorem of Ayoub \cite[Corollary 1.4.24]{ayoub-rig} states that the special-fiber functor $(\cdot)_\sigma\colon \FormSm/ K^\circ\ra\Sm/k$ from the category  of smooth formal schemes over  $ K^\circ$ to the category of smooth varieties over the residue field $k$ induces an equivalence of motives. In particular, by letting $(\cdot)_\eta\colon \FormSm/ K^\circ\ra\RigSm/K$  be the generic-fiber functor, there is a motivic triangulated monoidal  functor:{\small
	$$\MW^*\colon\DA^{\eff}_{\et}(k,\Lambda)\xrightarrow[\sim]{\RR(\cdot)_{\sigma*}}\FormDA^{\eff}_{\et}( K^\circ,\Lambda)\xrightarrow{\LL(\cdot)^*_\eta}\RigDA^{\eff}_{\et}(K,\Lambda)\xrightarrow[\sim]{\RR l_*}\RigDA^{\dagger\eff}_{\et}(K,\Lambda).$$
}
	
	As a whole, 
	by considering the functor $\MW^*$ and the complex $\Omega^\dagger$ we therefore obtain automatically a functorial cohomology theory on algebraic varieties over $k$ satisfying \'etale descent and homotopy invariance, which coincides with the one of Monsky-Washnitzer whenever this one is defined. It is formal to show that $\MW^*$ has a right adjoint $\MW_*$ and that 
	the motive $\MW_*\Omega^\dagger$ represents the ``classic" rigid cohomology, providing an alternative to its usual definition and to the rigid spectrum considered by Deglise-Mazzari \cite{dm} and Milne-Ramachandran \cite{mr} following Besser \cite{besser}. Our construction only uses canonical, explicit functors,  the classic \'etale sites on algebraic and analytic varieties and no hypothesis on the valuation of $K$. 
	
	Another crucial fact which is proved in the  literature concerns the finite dimensionality of the cohomological theories mentioned above (the most general statements are in \cite{kedlaya-fin}), as well as their compatibility with base extensions. The classic proofs rely on several reduction procedures, involving resolutions of singularities, localizations and homotopy. They decompose  the general statement into direct, computable checks on varieties of a special kind, such as the ones which are projective and smooth. We remark that these \textit{ad hoc} constructions are encapsulated in a fundamental theorem of Ayoub \cite[Theorem 2.5.35]{ayoub-rig}. When combined with the results of \cite{vezz-DADM}, it states that the category of rigid analytic motives with rational coefficients over a base field  $\RigDA^{\eff}_{\et}(K,\Q)$ is generated, in a suitable sense, by the motives $M(X)$ associated with smooth projective \emph{algebraic} varieties $X$ over $K$. Admittedly, the proof of the theorem consists in an elaborated composition of the standard reduction procedures, enhanced with the triangulated language allowed by the motivic setting. 
	
	As shown above, the main outcome of this article is proving that the overconvergent de Rham cohomology and rigid cohomology factor over the triangulated category $\RigDA^{\eff}_{\et}(K,\Q)$. In particular, using the theorem of Ayoub, we deduce  (see Corollary \ref{findim}) their finite-dimensionality as well as their compatibility with base change  by reducing to the  motives $M(X)$ of the aforementioned form, and hence to well-known facts related to the  classic de Rham cohomology of {complex smooth projective} varieties $X(\C)$. Our proof makes no distinction between the discrete-valuation case and the general case, and is independent on the  classic proofs (see \cite{berk-int}, and partial results in \cite{berth-fin}, \cite{gk-fin}, \cite{mebkhout-fin}).

	In the Appendix, we prove that an overconvergent structure of a variety corresponds to a presentation of its (adic) compactification as an inverse limit (in a weak sense defined by Huber) of strict inclusions of rigid varieties. This connects the theory of dagger spaces of Gro\ss e-Kl\"onne \cite{gk-over} to the theory of adic spaces of Huber \cite{huber2} and strengthens the parallel between the techniques used in this paper and the ones of \cite{vezz-fw} where smooth perfectoid spaces arise as inverse limits of finite maps of rigid varieties.

	\section{Overconvergent Rigid Varieties}\label{ovar}
	
	From now on we fix a complete valued field $K$ endowed with a non-archimedean valuation of rank $1$ and residue characteristic $p>0$. We denote by $\pi$ a pseudo-uniformizer of $K$ that is, an invertible, topologically nilpotent element. We also denote by $K^\circ$ the ring of integers and by $k$ the residue field.  We consider rigid analytic varieties as adic spaces, using the language of Huber \cite{huber2}. In particular, when we consider a point  $x\in X$ for a variety $X$ we mean a point in the sense of Huber (or, equivalently, a point of the $G$-topos of $X$). We only consider rigid analytic varieties  over $K$ which  are separated and taut (that is,  the closure of a quasi-compact subset is quasi-compact, see \cite[Definition 5.1.2]{huber}). If $R$ is a Tate algebra, we sometimes denote by $\Spa R$ the associated affinoid space $\Spa(R,R^\circ)$.

	The starting point to define \emph{overconvergent}, or \emph{dagger} varieties are the so-called  dagger algebras. For the sake of completeness, we report here their definition and some basic properties, proved in \cite{gk-over} and \cite{mw-fc1}. We refer to the Appendix for a link  between these definitions and the language of adic spaces of Huber.

	\begin{dfn}[{\cite{mw-fc1}, \cite{gk-over}}]\label{defdag}
		For $c\in K$ and $m,d\in\N_{>0}$ we denote by $K\langle c^{m/d}{\tau_1,\ldots,c^{m/d}\tau_n}\rangle $ or simply by $K\langle c^{m/d}\underline{\tau}\rangle  $ the subring of $K\langle{\tau_1,\ldots,\tau_n}\rangle$ of those power series   $\sum a_\alpha \underline{\tau}^\alpha$ such that $\lim |a_\alpha| \lambda^{|\alpha|}=0$ for $\lambda=|c|^{m/d}$. 
		We denote by $K\langle\underline{\tau}\rangle^\dagger=K\langle \tau_1,\ldots,\tau_n\rangle^\dagger$ the following topological subring of $K\langle \tau_1\ldots,\tau_n\rangle$  $$\varinjlim_h K\langle\pi^{1/h}\underline{\tau}\rangle=\varinjlim_h K\langle\pi^{1/h}{\tau_1,\ldots,\pi^{1/h}\tau_n}\rangle$$ 
		that is, the ring of those power series $\sum a_\alpha \underline{\tau}^\alpha$ such that $\lim |a_\alpha| \lambda^{|\alpha|}=0$ for some $\lambda\in\R_{>1}$. 
		A \emph{dagger algebra }is a topological $K$-algebra $R $ isomorphic to a quotient  $K\langle\underline{\tau}\rangle^\dagger\!/I$ of $K\langle\underline{\tau}\rangle^\dagger$. Its completion $\hat{R}$ is the Tate algebra $K\langle\underline{\tau}\rangle/I$.  A morphism of dagger algebras $R  \ra S$ is a $K$-linear (hence continuous) morphism. The category  $\Aff^\dagger$ of \emph{affinoid dagger spaces} is  the opposite category of dagger algebras. We denote by $\Spa^\dagger R $ the object on $\Aff^\dagger$ associated with $R $.  We say that its \emph{limit} is $\Spa \hat{R}$ where $\hat{R} $ is the completion of $R $ and vice-versa we say that $\Spa^\dagger R $ is a dagger structure of $\Spa \hat{R}$. We say that $\Spa^\dagger R $ [resp. a morphism $\Spa^\dagger R'\ra\Spa^\dagger R$] has the property $\mathbf{P}$ if $\Spa \hat{R} $ [resp. the induced morphism of affinoid spaces $\Spa\hat{R}' \ra\Spa \hat{R} $] has the property $\mathbf{P}$. The category of smooth morphisms of affinoid dagger spaces $X \ra S $ to a  fixed affinoid dagger space $S=\Spa^\dagger R $  is denoted by $\AffSm^\dagger\!/S $.  Similarly, a collection of morphisms $\{\Spa^\dagger R_i \ra\Spa^\dagger R \}$ in $\AffSm/S $ is a \emph{cover} if the induced collection $\{\Spa \hat{R}_i \ra\Spa \hat{R} \}$  is a topological cover, that is, if   $\bigsqcup\Spa \hat{R}_i \ra\Spa \hat{R}$ is surjective.
	\end{dfn}
	
	\begin{rmk}
		The functor $\Spa^\dagger R\mapsto\Spa \hat{R}$ is faithful, and $\Hom(\Spa^\dagger R',\Spa^\dagger R)$ corresponds to those maps in $\Hom(\Spa \hat{R}',\Spa\hat{R})$ such that the image of $R\subset \hat{R}$ lies in $R'\subset\hat{R}'$.
	\end{rmk}
	
	\begin{exm}\label{daggerballname}
		We denote by $\B^{n\dagger}$ the smooth affinoid dagger space $\Spa^\dagger  K\langle\tau_1,\ldots,\tau_n\rangle^\dagger$.  
	\end{exm}

	\begin{prop}[{\cite[Paragraph 1.4]{gk-over}}]
		The dagger algebra $K\langle\underline{\tau}\rangle^\dagger$ is  a Noetherian factorial Jacobson ring. In particular, any dagger algebra  is isomorphic to a quotient $K\langle\tau_1,\ldots,\tau_n\rangle^\dagger\!/(f_1,\ldots,f_k)$ with $f_i\in K\langle \pi^{1/N}\underline{\tau}\rangle$ for a sufficiently big $N$.
	\end{prop}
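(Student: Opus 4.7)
The strategy is to reduce these properties to the analogous classical facts for Tate algebras, the main extra ingredient being a Weierstrass preparation/division theorem in the overconvergent setting. Setting $R_h := K\langle \pi^{1/h}\underline{\tau}\rangle$, one has $K\langle\underline{\tau}\rangle^\dagger = \varinjlim_h R_h$, and after an invertible rescaling of the coordinates each $R_h$ is isomorphic to a Tate algebra in $n$ variables, hence is already Noetherian, factorial, and Jacobson. However, these properties do not pass formally to a filtered colimit, so an additional argument is required.

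The key step I would establish is the following Weierstrass division theorem. After a suitable $K$-linear change of coordinates, any nonzero $f \in R_\infty := K\langle\underline{\tau}\rangle^\dagger$ can be assumed $\tau_n$-distinguished of some degree $d$, and then every $g \in R_\infty$ admits a unique decomposition $g = qf + r$ with $q \in R_\infty$ and $r \in K\langle\tau_1,\ldots,\tau_{n-1}\rangle^\dagger[\tau_n]$ of degree $< d$. The classical Tate proof (a contraction-mapping argument in a suitable Banach space) can be run inside a single $R_h$ containing both $f$ and $g$, provided one checks that the construction only causes a controlled loss of overconvergence radius, so that $q$ and $r$ still lie in some $R_{h'}$ with $h' \geq h$. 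A complementary Weierstrass preparation statement then factors such an $f$ as $u\cdot P$ with $u \in R_\infty^\times$ and $P \in K\langle\tau_1,\ldots,\tau_{n-1}\rangle^\dagger[\tau_n]$ monic of degree $d$ in $\tau_n$. Getting this radius bookkeeping to work is the main technical obstacle; it is the point at which the dagger setting genuinely differs from the Tate one.

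Granting the division theorem, the three properties follow by induction on $n$ exactly as in the classical case. For Noetherianity: given a non-zero $f$ in an ideal $I \subset R_\infty$, arrange it to be distinguished; by Weierstrass division $R_\infty/(f)$ is finite as a module over $K\langle\tau_1,\ldots,\tau_{n-1}\rangle^\dagger$, which is Noetherian by the inductive hypothesis, so $I/(f)$ and hence $I$ are finitely generated. For factoriality: reduce to unique factorization of distinguished polynomials and apply Gauss' lemma to $K\langle\tau_1,\ldots,\tau_{n-1}\rangle^\dagger[\tau_n]$ using the inductive UFD hypothesis. For the Jacobson property: identify the maximal ideals of $R_\infty$ with Galois orbits of classical points of $\Spa \hat{R}$ via the faithful inclusion $R_\infty \hookrightarrow \hat{R}$ (as for Tate algebras, using the Nullstellensatz), and use density of these classical points in the closure of any non-maximal prime to conclude that every prime is an intersection of maximal ideals.

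Finally, the ``in particular'' clause is immediate from Noetherianity. Any dagger algebra is by definition of the form $R = K\langle\underline{\tau}\rangle^\dagger/I$; by what was just proved, $I$ is generated by finitely many elements $f_1,\ldots,f_k$. Since $R_\infty$ is a filtered colimit of the $R_h$, each $f_i$ belongs to some $K\langle \pi^{1/N_i}\underline{\tau}\rangle$, and taking $N := \max_i N_i$ places all the generators simultaneously in $K\langle \pi^{1/N}\underline{\tau}\rangle$, as required.
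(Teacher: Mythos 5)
This proposition is not proved in the paper at all: it is quoted from Gro\ss e-Kl\"onne \cite[Paragraph 1.4]{gk-over}, so there is no internal argument to compare against. Your sketch reconstructs essentially the route taken in that reference (and, ultimately, in Monsky--Washnitzer): a Weierstrass division and preparation theorem for overconvergent power series, followed by induction on the number of variables to get Noetherianity, factoriality and the Jacobson property, with the ``in particular'' clause deduced correctly from Noetherianity plus the presentation $K\langle\underline{\tau}\rangle^\dagger=\varinjlim_h K\langle\pi^{1/h}\underline{\tau}\rangle$. Two caveats. First, the step you yourself flag as the main obstacle --- running the contraction-mapping division inside a single $K\langle\pi^{1/h}\underline{\tau}\rangle$ with only a controlled loss of radius, so that the quotient and remainder remain overconvergent --- is precisely the content of the cited result; as written your argument assumes rather than establishes it, so the sketch is an outline of the known proof rather than a self-contained one. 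Second, the remark that each $R_h=K\langle\pi^{1/h}\underline{\tau}\rangle$ becomes a Tate algebra ``after an invertible rescaling of the coordinates'' is inaccurate unless $K$ contains an element of norm $|\pi|^{1/h}$ (it fails, e.g., for discretely valued $K$); in general $R_h$ is only an affinoid algebra of a polydisc with radii in $\sqrt{|K^\times|}$, which is still Noetherian and Jacobson but is not literally a Tate algebra. This slip is harmless, since you do not use it downstream: your induction runs on the colimit via Weierstrass division, not on the individual $R_h$.
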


	\begin{dfn}
		Choose a presentation of a  dagger algebra $R \cong K\langle\tau_1,\ldots,\tau_n\rangle^\dagger\!/(f_1,\ldots,f_k)$ with completion $\hat{R}$. We   denote by $\hat{R}_h$ the Tate algebra $K\langle\pi^{1/(H+h)}\underline{\tau}\rangle/(f_i)$. It is well defined for all $h\geq1$ for a sufficiently big $H$. 
		The ring $R $ is the union $\varinjlim_h \hat{R}_h$. If we denote  by $X $ the space $\Spa^\dagger R $ we also denote by $X_h$ the space $\Spa \hat{R}_h$ and by $\hat{X}$ the space $\Spa \hat{R}$. 
	\end{dfn}
	
	\begin{rmk}
		The definition of the algebras $\hat{R}_h$  above depend on the presentation of the dagger algebra $R$. Whenever we use this notation, we consider a possible choice of presentation of $R$. If we let $R^{ +}$ be $\varinjlim \hat{R}_h^\circ$ then the affinoid Huber space $\Spa(R ,R^{ +})$ is the compactification of $\hat{X}$ over $K$ and is a (weak) inverse limit of adic spaces $\varprojlim X_h$ following Huber's definition \cite[Definition 2.4.2]{huber}. We refer to the Appendix for the details (see Proposition \ref{moraffdag}).
	\end{rmk}
	
	\begin{rmk}\label{dagislim2}
		Let $U$ and $V$ be two open subvarieties of $X$. We write $U\Subset_XV$ if the closure of $U$ lies in $V$ (see \cite[Proposition 2.1.13]{ayoub-rig}). By \cite[Proposition 2.1.16]{ayoub-rig} we have that $\hat{X}\Subset_{X_1} X_{h}\Subset_{X_1}X_1$. Moreover, the sequence $\{X_h\}$ is coinitial with respect to $\Subset$ among rational subspaces of $X_1$ with this property.
	\end{rmk}
	
	We now recall some basic facts about the category of dagger spaces. In particular, we isolate in the following proposition the fundamental Artin's approximation lemma. It will not be used  under this general form, but rather in a smooth ``cubical'' fashion (see \ref{sollevacocu}).
	
	\begin{prop}[{\cite[Corollary 7.5.10]{fvdp}}]Suppose $\car K=0$ or $\car K=p>0$ and $[K\colon K^p]<\infty$. Let $X  $ and $Y  $ be two  affinoid dagger spaces with limit $\hat{X}$ and $\hat{Y}$ respectively. We fix a Banach norm $||\cdot||$ on $\mcO(\hat{X})$ and $\mcO(\hat{Y})$.  
		For any [iso-]morphism $\phi\colon \hat{X}\ra \hat{Y}$ and any $\varepsilon>0$ there exists a [iso-]morphism $\psi\colon X \ra Y  $ such that $||\mcO(\phi)(f)-\mcO(\hat{\psi})(f)||\leq\epsilon$ for all $f\in \mcO(\hat{Y})$ with $||f||\leq1$
	\end{prop}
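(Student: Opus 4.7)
The plan is to reduce everything to Bosch's rigid Artin approximation theorem \cite{bosch-artin}, which is the engine behind this statement. First I would fix presentations: write $Y=\Spa^\dagger R_Y$ with $R_Y\cong K\langle\sigma_1,\ldots,\sigma_m\rangle^\dagger/(g_1,\ldots,g_\ell)$ and $X=\Spa^\dagger R_X$ with a chosen set of generators. Under such a presentation, a morphism $\hat{X}\to\hat{Y}$ is completely determined by the tuple $a_i:=\mcO(\hat\phi)(\sigma_i)\in\mcO(\hat{X})$, subject to the constraints $g_j(a_1,\ldots,a_m)=0$ in $\mcO(\hat{X})$ and to $|a_i|\le\rho$ for suitable $\rho$ (so that the series defining the $g_j$ converge). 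Finding a morphism $\psi\colon X\to Y$ which $\varepsilon$-approximates $\hat\phi$ in the sense of the statement is then equivalent to finding a tuple $b_i\in\mcO(X)=R_X\subset\mcO(\hat{X})$ inside the overconvergent subring, satisfying the same equations $g_j(b_1,\ldots,b_m)=0$ and with $\|a_i-b_i\|\le\varepsilon'$ for a suitable $\varepsilon'$ depending on $\varepsilon$ and on the presentation of $R_Y$.

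The existence of such a $b$ is precisely the content of Bosch's theorem: the dagger algebra $R_X=\varinjlim_h\mcO(X_h)$ is Henselian inside its Tate completion $\mcO(\hat{X})$, and any solution in $\mcO(\hat{X})$ of a system of overconvergent analytic equations can be approximated, in the Banach norm and to arbitrary precision, by a solution lying in the dagger subring. The hypothesis $\car K=0$ or $[K:K^p]<\infty$ is what guarantees that the Tate (hence dagger) algebras involved are excellent, so that Artin approximation applies. Applied to the equations $g_j$ and to the known solution $(a_i)$, this produces the required $(b_i)$, and hence the morphism $\psi\colon X\to Y$ of dagger spaces.

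For the isomorphism variant one has to ensure that the approximation remains an isomorphism. If $\hat\phi$ is invertible and $\hat\psi$ is close enough to it, then $\mcO(\hat\psi)\circ\mcO(\hat\phi)^{-1}$ is a bounded operator on $\mcO(\hat{Y})$ of the form $\id+\delta$ with $\|\delta\|<1$, so that it is an automorphism of the Tate algebra via the usual geometric series. Since the series defining the inverse has coefficients of rapidly decreasing norm, it preserves the overconvergent subring, and one concludes that $\psi$ is itself an isomorphism of dagger spaces. Alternatively, one may apply the approximation theorem again to the inverse of $\hat\phi$ and combine the two approximations, using the same geometric-series trick to correct the composite to the identity.

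The main obstacle is the invocation of Artin approximation in the rigid analytic setting, which is exactly the nontrivial input of \cite{bosch-artin}; once that is granted, the passage from general affinoid dagger spaces to the formulation of the lemma is a matter of bookkeeping with presentations and Banach norms. Comparing the norms on $\mcO(\hat Y)$ induced by different presentations (and relating $\varepsilon$ to the $\varepsilon'$ above) is routine but must be done carefully, since the constants depend on how tightly the $g_j$ bound the $\sigma_i$ on $\hat Y$; this accounts for the freedom to fix the Banach norms in the statement. The paper will in fact not use this general form, but a cubical smooth version tailored to the motivic argument (Proposition \ref{sollevacocu}), which is handled directly and avoids this deep input.
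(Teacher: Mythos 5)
Be aware of what you are actually proving against: the paper offers no proof of this proposition. It is recalled verbatim from \cite[Corollary 7.5.10]{fvdp}, and the surrounding text stresses that it will never be used in this general form; what the paper proves and actually uses is the smooth ``cubical'' substitute (Corollary \ref{artins} and Proposition \ref{sollevacocu}), built on the analytic inverse function theorem (Proposition \ref{implicitC}) and on Proposition \ref{roots3}, deliberately \emph{independent} of Bosch's approximation theorem. Your proposal therefore reconstructs the classical literature route through \cite{bosch-artin}, which is essentially how the cited statement is obtained in \cite{fvdp}, rather than paralleling anything proved in the paper. As a reconstruction it is sound in outline for the morphism case: present $R_Y=K\langle\underline{\sigma}\rangle^\dagger/(g_1,\ldots,g_\ell)$, encode $\phi$ by the power-bounded tuple $a_i=\mcO(\phi)(\sigma_i)$, and approximate by an overconvergent tuple; note that power-boundedness survives an $\varepsilon\leq 1$ perturbation by the ultrametric inequality, and that a power-bounded overconvergent tuple killing the $g_j$ defines a dagger morphism (Proposition \ref{daggerballrep}). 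One caveat: Bosch's theorem is stated over the Washnitzer algebras $K\langle\underline{\tau}\rangle^\dagger\subset K\langle\underline{\tau}\rangle$, so for a general dagger algebra $R_X=K\langle\underline{\tau}\rangle^\dagger/I$ you must lift the relations, i.e.\ approximate a Tate solution of the system $g_j(\underline{A})-\sum_k c_{jk}h_k=0$ (with $h_k$ overconvergent generators of $I$, and the $c_{jk}$ as auxiliary unknowns) in the free dagger algebra. Your phrase ``$R_X$ is Henselian inside its Tate completion'' papers over exactly this step: Henselianity is neither the relevant property nor sufficient, and the reduction should be written out.

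The isomorphism case, however, has a genuine gap. If $\hat\psi$ is close to the isomorphism $\hat\phi$, the geometric series does show that $\mcO(\hat\psi)=\bigl(\id+\delta\bigr)\circ\mcO(\hat\phi)$ is bijective, hence $\hat\psi$ is an isomorphism of rigid spaces; but the statement asks for $\psi$ to be an isomorphism of \emph{dagger} spaces, so the inverse must again respect the overconvergent structure. Your justification --- that ``the series defining the inverse has coefficients of rapidly decreasing norm, so it preserves the overconvergent subring'' --- is not an argument: $(\id+\delta)^{-1}=\sum(-\delta)^n$ is an identity of bounded operators on the completed algebra $\mcO(\hat Y)$, its individual terms need not preserve $R_Y$, and the limit is taken in the completed topology, so nothing forces the value on an overconvergent element to be overconvergent. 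What is missing is a lemma to the effect that a morphism of dagger spaces whose completion is an isomorphism is itself an isomorphism (or, equivalently, direct control of the overconvergence of the inverse tuple, e.g.\ via a dagger inverse function theorem or an argument in the spirit of Proposition \ref{roots3}, as the paper does in its smooth setting in Corollary \ref{artins}); this is part of what \cite{fvdp} and \cite{gk-over} actually supply. Your alternative suggestion (approximate $\hat\phi^{-1}$ separately and correct the composite) runs into exactly the same unproved claim about dagger endomorphisms close to the identity, since the mutual-inversion conditions involve compositions in the unknowns and are not a fixed analytic system to which Bosch applies directly. The norm bookkeeping you defer is, as you say, routine.
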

	
	\begin{rmk}
		Following the notations of the previous proposition, the property $||\mcO(\phi)(f)-\mcO(\hat{\psi})(f)||\leq\epsilon$ for all $f\in \mcO(\hat{Y})$ with $||f||\leq1$ is typically denoted by $||{\phi} - \hat{\psi}||\leq\epsilon$. We will also follow this convention in what follows.
	\end{rmk}
	
	\begin{prop}[{\cite[Paragraph 1.16]{gk-over}}]
		The category of dagger algebras has coproducts $\otimes^\dagger$. The categories $\Aff^\dagger$ and $\AffSm/S $ have fibered products.
	\end{prop}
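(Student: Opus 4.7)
The plan is to construct coproducts of dagger algebras and then deduce the existence of fibered products in $\Aff^\dagger$ and $\AffSm/S$ by duality, together with the stability of smoothness under base change. Given dagger algebras $R$ and $S$, I would choose presentations $R \cong K\langle\tau_1,\ldots,\tau_n\rangle^\dagger/I$ and $S \cong K\langle\sigma_1,\ldots,\sigma_m\rangle^\dagger/J$ and define
$$R \otimes^\dagger S := K\langle\tau_1,\ldots,\tau_n,\sigma_1,\ldots,\sigma_m\rangle^\dagger/(I + J),$$
together with the obvious structure maps from $R$ and $S$. By construction its completion is naturally isomorphic to the complete tensor product $\hat R \,\hat\otimes\, \hat S$ of the Tate algebras.

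To verify the universal property, let $T$ be a dagger algebra together with maps $f\colon R \to T$ and $g\colon S \to T$. Their completions induce, by the universal property of $\hat R \,\hat\otimes\, \hat S$, a unique continuous $K$-algebra map $\hat R \,\hat\otimes\, \hat S \to \hat T$. By the remark following Definition \ref{defdag} it suffices to check that $R \otimes^\dagger S$, viewed inside $\hat R \,\hat\otimes\, \hat S$, is sent into $T$. Since $T = \varinjlim_h \hat T_h$ relative to a fixed presentation of $T$, the finitely many elements $f(\tau_i)$ and $g(\sigma_j)$ lie in $\hat T_{h_0}$ for some $h_0$. Elementary norm estimates on Tate algebras then extend the assignment $\tau_i \mapsto f(\tau_i)$, $\sigma_j \mapsto g(\sigma_j)$ to a continuous map $K\langle \pi^{1/h}\underline\tau, \pi^{1/h}\underline\sigma\rangle \to \hat T_{h_0}$ for all sufficiently large $h$. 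Passing to the colimit in $h$ yields the desired factorization $K\langle\underline\tau, \underline\sigma\rangle^\dagger \to T$, and the relations in $I+J$ are killed there because they are already killed by $\hat f$ and $\hat g$. Independence of the chosen presentations follows formally from this universal property.

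For the relative version, define $R \otimes^\dagger_T S$ as the coequalizer of the two natural maps $T \otimes^\dagger R \otimes^\dagger S \rightrightarrows R \otimes^\dagger S$ induced by the structure maps $T \to R$ and $T \to S$. This produces a dagger algebra representing the fibered product in $\Aff^\dagger$, with completion $\hat R \,\hat\otimes_{\hat T}\, \hat S$. For $\AffSm/S$ one invokes that smoothness of a morphism of dagger spaces is, by definition, a property of its completion, and that smoothness of morphisms of Tate algebras is stable under complete tensor products.

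The main obstacle is the extension step in the universal property: one needs a \emph{uniform overconvergence radius} in the target $T$ sufficient to absorb the radius of the source. This is exactly what the colimit description $T = \varinjlim_h \hat T_h$ provides, so in the end the whole argument reduces to continuity estimates on Tate algebras combined with the characterization of dagger-algebra morphisms recalled in the remark above.
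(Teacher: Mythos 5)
There is a genuine gap, and it sits exactly at the step you yourself single out as the ``main obstacle.'' (Note first that the paper does not prove this statement: it quotes it from Gro\ss e-Kl\"onne [gk-over, Paragraph 1.16] and only records, just after the statement, the explicit description of $R\otimes^\dagger_T S$ as the image of $K\langle\underline{\tau},\underline{\sigma}\rangle^\dagger$ in $\hat{R}\widehat{\otimes}_{\hat{T}}\hat{S}$; so any proof has to be supplied from scratch, as you attempt.) Your extension step is quantified the wrong way around and, as stated, is false. Knowing that $f(\tau_i),g(\sigma_j)$ lie in some fixed $\hat{T}_{h_0}$ does \emph{not} allow you to extend $\tau_i\mapsto f(\tau_i)$, $\sigma_j\mapsto g(\sigma_j)$ to a continuous map $K\langle\pi^{1/h}\underline{\tau},\pi^{1/h}\underline{\sigma}\rangle\to\hat{T}_{h_0}$ for large $h$: for that you would need the sup-norms of these elements \emph{computed in $\hat{T}_{h_0}$} to be at most the radius $|\pi|^{-1/h}$ of the source polydisc, and membership in $\hat{T}_{h_0}$ gives no such bound. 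Concretely, take $T=K\langle\chi\rangle^\dagger$ with its standard presentation, and $t=\chi\in\hat{T}_1\cap\hat{T}^\circ$: the norm of $\chi$ in $\hat{T}_1$ is $|\pi|^{-1}$, so $\tau\mapsto\chi$ does not extend to a continuous map $K\langle\pi^{1/h}\tau\rangle\to\hat{T}_1$ for $h\geq 2$. The correct statement has the opposite quantification: for each $h$ there is $h'=h'(h)$ such that the substitution lands in $\hat{T}_{h'}$. This is \emph{not} a consequence of the colimit description $T=\varinjlim_h\hat{T}_h$ alone (which is all you invoke); it requires knowing that an element $t\in T\cap\hat{T}^\circ$ satisfies $\pi t^n\in\varinjlim_h\hat{T}_h^\circ$ for all $n$, equivalently that its sup-norm on $X_{h'}$ drops arbitrarily close to $1$ for $h'$ large. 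That is precisely the content (and the proof) of Proposition \ref{daggerballrep}, which rests on the chain $\hat{X}\subset U(t/1)\Subset_{X_1}U(\pi t/1)$, Ayoub's Proposition 2.1.16, and the coinitiality of $\{X_h\}$ among $\Subset$-neighborhoods of $\hat{X}$ --- a maximum-modulus/strict-neighborhood argument, not an ``elementary norm estimate on Tate algebras.'' With that input your extension step (and hence the universal property) does go through, but as written the key analytic ingredient is asserted rather than proved.

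Two secondary points. First, your claims that the completion of $K\langle\underline{\tau},\underline{\sigma}\rangle^\dagger/(I+J)$ is $\hat{R}\widehat{\otimes}\hat{S}$, and that $R\otimes^\dagger S$ may be ``viewed inside'' $\hat{R}\widehat{\otimes}\hat{S}$, use that completion of a dagger algebra commutes with quotients and that a dagger algebra injects into its completion modulo the extended ideal; these are true but are theorems about dagger algebras (flatness of the completion, closedness of ideals, cf.\ [gk-over, Section 1] and [BDR]) not established in the paper, and the paper's own formulation of $R\otimes^\dagger_T S$ as an image inside $\hat{R}\widehat{\otimes}_{\hat{T}}\hat{S}$ is phrased precisely so as to sidestep them; you should either cite these facts or argue the universal property directly on the quotient without passing through the completion. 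Second, the relative case via coequalizers (quotient by the ideal generated by the elements $f(t)\otimes 1-1\otimes g(t)$, using Noetherianity to get finite generation) and the remark that $\AffSm^\dagger\!/S$ is stable under fibre products over $S$ because smoothness is tested on completions and is stable under base change and composition are fine as sketched.
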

	
	We recall how coproducts of dagger algebras are formed. Suppose given three dagger algebras $T$,  $R=K\langle\underline{\tau}\rangle^\dagger/I$ and $S=K\langle\underline{\sigma}\rangle^\dagger/J$ and two maps $T\ra R$, $T\ra S$. The dagger algebra $R\otimes^\dagger_TS$ is the image of $K\langle\underline{\tau},\underline{\sigma}\rangle^\dagger$ under the canonical map to the Tate algebra $\hat{R}\widehat{\otimes}_{\hat{T}}\hat{S}$.

	\begin{dfn}
		Let $R $ be	a dagger algebra. We denote by $R \langle\tau_1\,\ldots,\tau_n\rangle^\dagger$ the dagger algebra $R \otimes^\dagger K\langle\tau_1\,\ldots,\tau_n\rangle^\dagger$.
	\end{dfn}
	
	Whenever $f_1,\ldots,f_n,g$ are elements of a Tate algebra $R$ generating the unit ideal, we  denote by $U(f_1,\ldots,f_n/g)$ the rational space of the affinoid variety $\Spa R$ defined by the conditions $|f_i(x)|\leq|g(x)|$.
	
	\begin{prop}[{\cite[Proposition 2.6, Paragraph 2.11]{gk-over}}]\label{eqrational}
		Let $X =\Spa^\dagger R $ be an affinoid dagger space. Any rational open subset $U$ of $\Spa \hat{R}$ can be written as $U(f_1,\ldots,f_n/g)$ with $f_i,g\in R $ generating the unit ideal. The dagger space $U =\Spa^\dagger \mcO^\dagger(U)$ with 
		$$\mcO^\dagger(U)=R \langle\tau_1,\ldots,\tau_n\rangle^\dagger\!/(g\tau_i-f_i)$$ 
		is an open rational subspace of $\Spa^\dagger R $ canonically independent on the choice of $f_i,g$. Moreover $\mcO^\dagger$ is a sheaf of topological $K$-algebras on $\Spa \hat{R}$.
	\end{prop}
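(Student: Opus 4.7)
The plan is to reduce each of the four assertions---representability of rational subsets in $R$, well-definedness of $\mcO^\dagger(U)$ as a dagger algebra, independence from the presentation, and the sheaf condition---to its analogue for the Tate completion $\hat R$, using only density of $R\subset\hat R$, stability of rational subdomains under small perturbations, and Tate's acyclicity theorem for the affinoid algebras $\hat R_h$.

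For representability, recall that $R=\varinjlim_h\hat R_h$ is dense in $\hat R$. Given $f_1,\ldots,f_n,g\in\hat R$ generating the unit ideal and defining $U$, I would approximate them by $f_1',\ldots,f_n',g'\in R$; for sufficiently small approximation, $f_i',g'$ still generate the unit ideal (a perturbation of a unit remains a unit) and define the same rational subset, by a standard perturbation argument. For the dagger structure, $\mcO^\dagger(U)=R\langle\tau_1,\ldots,\tau_n\rangle^\dagger/(g\tau_i-f_i)$ is a dagger algebra by construction, being a quotient of $R\otimes^\dagger K\langle\tau\rangle^\dagger$, and a direct computation identifies its completion with the Tate rational localization $\hat R\langle\tau\rangle/(g\tau_i-f_i)=\mcO(U)$, since completion commutes with finitely generated quotients and with the inclusion $K\langle\tau\rangle^\dagger\hookrightarrow K\langle\tau\rangle$.

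For independence I would pass to an intrinsic description. Fix a presentation of $R$ yielding Tate algebras $\hat R_h$; for $h$ large enough, $f_i,g\in\hat R_h$ still generate the unit ideal and cut out a rational subdomain $U_h\subset X_h$. One verifies the identification
$$\mcO^\dagger(U)=\varinjlim_h\mcO(U_h),$$
exhibiting $\mcO^\dagger(U)$ as the ring of analytic functions on $U$ extending to some strict neighborhood of $U$ in $\hat X$ (cf.\ Remark \ref{dagislim2}). Any two choices of defining functions for $U$, and any two presentations of $R$, produce cofinal systems of such strict neighborhoods, so this colimit is canonically determined by $U$ alone.

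The sheaf condition is the main obstacle. Given a finite rational cover $\{V_j\}$ of $\hat X$, I would apply the first assertion to choose defining functions in $R$, lifting $\{V_j\}$ to a compatible family of rational covers $\{V_j^h\}$ of the $X_h$ for $h$ large. Tate's acyclicity applied to each $\hat R_h$ yields an exact \v{C}ech complex at every level; by the previous step each term of the \v{C}ech complex for $\mcO^\dagger$ is the filtered colimit (over $h$) of the corresponding Tate term, and since filtered colimits are exact the \v{C}ech complex for $\mcO^\dagger$ is also exact. The delicate point is that the \emph{same} defining functions must simultaneously cut out rational covers of every $X_h$ and assemble the resulting Tate \v{C}ech complexes into a filtered system indexed by $h$; both are secured precisely by having chosen all defining functions once and for all in $R$, which is why the first assertion is the technical cornerstone of the proof.
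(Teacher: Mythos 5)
The paper itself offers no proof of this proposition: it is quoted from Gro\ss e-Kl\"onne (\cite[Proposition 2.6, Paragraph 2.11]{gk-over}), so your attempt can only be measured against that argument, which does indeed run through density of $R$ in $\hat R$, stability of rational subdomains under small perturbations, and a colimit of Tate acyclicity. Your steps for representability of $U$ by functions in $R$ and for identifying the completion of $R\langle\underline{\tau}\rangle^\dagger\!/(g\tau_i-f_i)$ with $\mcO(U)$ are fine in outline.

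There is, however, a genuine error in your intrinsic description, and it propagates into both the independence and the sheaf arguments. The identification $\mcO^\dagger(U)=\varinjlim_h\mcO(U_h)$, with $U_h\subset X_h$ cut out by the \emph{same} inequalities $|f_i|\le|g|$, is false. Take $R=K\langle t\rangle^\dagger$ with its standard presentation $X_h=\Spa K\langle \pi^{1/h}t\rangle$ (Example \ref{daggerball}) and $U=U(t/1)=\B^1=\hat X$: then $\mcO^\dagger(U)=R\langle\tau\rangle^\dagger\!/(\tau-t)\cong K\langle t\rangle^\dagger$, whereas $U_h=\{x\in X_h:|t(x)|\le1\}=\B^1$ for every $h$, so $\varinjlim_h\mcO(U_h)=K\langle t\rangle$. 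The point is that your system $\{U_h\}$ enlarges $U$ only in the ambient direction of the presentation of $R$, while $R\langle\underline{\tau}\rangle^\dagger\!/(g\tau_i-f_i)=\varinjlim_h \hat R_h\langle\pi^{1/h}\underline{\tau}\rangle/(g\tau_i-f_i)$ (up to reindexing), whose $h$-th term is the ring of the \emph{relaxed} subdomain $\{x\in X_h:|f_i(x)|\le|\pi|^{-1/h}|g(x)|\}$; one must also loosen the defining inequalities. Equivalently, the correct intrinsic description is $\mcO^\dagger(U)=\varinjlim_V\mcO(V)$ over rational $V\subset X_1$ with $U\Subset_{X_1}V$ (such relaxed subdomains are coinitial by Proposition \ref{cofinal}, cf.\ Remark \ref{dagislim2} and Example \ref{daggerrat}), and your $U_h$ are in general not strict neighborhoods of $U$: in the example above $U\not\Subset_{X_1} U_h$, since the closure of $\B^1$ in $X_1$ contains rank-$2$ specializations of the Gauss point lying outside $\B^1$. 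With the corrected colimit, your independence argument and the filtered-colimit-of-Tate-\v{C}ech-complexes argument for the sheaf property do go through (this is essentially Gro\ss e-Kl\"onne's route), but you should then also check that the relaxed lifted covers still cover $X_h$ for $h$ large (quasi-compactness plus the coinitiality of Remark \ref{dagislim2}), and say a word about topologies, since the assertion is that $\mcO^\dagger$ is a sheaf of \emph{topological} $K$-algebras, not merely of $K$-algebras.
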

	
	\begin{dfn}\label{Odag}
		Let $X $ be an affinoid dagger space with limit $\hat{X}$. We denote by $\mcO^\dagger_{X }$ or simply by $\mcO^\dagger $ the sheaf of topological algebras on the rational site of $X$ as well as the sheaf of topological algebras on $\hat{X}$ introduced in Proposition \ref{eqrational}.
	\end{dfn}

	In the category of affinoid rigid analytic spaces, the functor $\Spa R\mapsto R^\circ$ is represented by $\B^1$. The next proposition shows the role of the dagger disc $\B^{1\dagger}$ introduced above.

	\begin{prop}\label{daggerballrep}
		The  affinoid dagger space $\B^{1\dagger}$  represents the functor $\Spa^\dagger R \mapsto \hat{R}^\circ\cap R $. 
	\end{prop}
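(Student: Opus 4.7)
The plan is to unwind both sides of the claimed bijection. By the remark immediately following Definition~\ref{defdag}, a dagger morphism $\varphi\colon \Spa^\dagger R \to \B^{1\dagger}$ is the same datum as a morphism of completions $\hat\varphi\colon \Spa \hat R \to \B^1$ whose pullback sends $K\langle\tau\rangle^\dagger \subset K\langle\tau\rangle$ into $R \subset \hat R$. The classical representability of $\B^1$ (recalled just before the proposition) identifies $\hat\varphi$ with an element $f := \hat\varphi^*(\tau) \in \hat R^\circ$. Thus morphisms $\Spa^\dagger R \to \B^{1\dagger}$ correspond to elements $f \in \hat R^\circ$ with the property that $g(f) \in R$ for every $g \in K\langle\tau\rangle^\dagger$. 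Taking $g = \tau$ already forces $f \in R$, so these morphisms form a subset of $\hat R^\circ \cap R$.

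The non-trivial point is the reverse inclusion: given any $f \in \hat R^\circ \cap R$, every overconvergent series $g = \sum_k b_k \tau^k \in K\langle\tau\rangle^\dagger$ (with $|b_k|\lambda^k \to 0$ for some $\lambda > 1$) must satisfy $g(f) = \sum_k b_k f^k \in R$. Fix a presentation of $R$ so that $R = \varinjlim_h \hat R_h$, with neighborhoods $\hat X \Subset X_h$ as in Remark~\ref{dagislim2}. Then $f \in \hat R_{h_0}$ for some $h_0$, i.e.\ $f$ is a bounded analytic function on $X_{h_0}$. Since $|f|_{\hat X,\sup} \leq 1 < \lambda$ and the $\{X_h\}$ are cofinal among rational neighborhoods of $\hat X$, one can select $h_1 \geq h_0$ with $|f|_{X_{h_1},\sup} \leq \lambda' < \lambda$. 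The series $\sum_k b_k f^k$ then converges in the Banach algebra $\hat R_{h_1}$ and its sum, which maps to $g(f)$ in $\hat R$, is the desired element of $R$.

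The principal technical input is the sup-norm estimate on some strict neighborhood $X_{h_1}$: this is where overconvergence is used in a non-formal way. It is extracted from the cofinality of $\{X_h\}$ by observing that, for any $\lambda' \in \sqrt{|K^\times|}$ with $1 < \lambda' < \lambda$, the rational set $U := \{x \in X_{h_0} : |f(x)| \leq \lambda'\}$ is an open neighborhood of $\hat X$ and hence contains some $X_{h_1}$ by Remark~\ref{dagislim2}. Everything else is bookkeeping in the colimit description of dagger algebras.
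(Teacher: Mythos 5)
Your overall reduction is the same as the paper's (and your forward direction, taking $g=\tau$, is fine and even avoids the paper's appeal to \cite[Theorem 2]{BDR}), but there is a genuine gap at the decisive step of the converse. Remark \ref{dagislim2} does not say that $\{X_h\}$ is coinitial among all rational (or open) neighborhoods of $\hat X$ in $X_1$: it says coinitiality among rational subspaces $W$ with $\hat X\Subset_{X_1}W$, i.e.\ $W$ must contain the \emph{closure} of $\hat X$ in $X_1$. A mere open neighborhood of $\hat X$ need not contain any $X_{h_1}$: already $\hat X$ itself is a rational open neighborhood of $\hat X$ in $X_1$ containing no $X_h$. Concretely, your argument as written would work just as well with $\lambda'=1$, and would then ``prove'' that every $f\in\hat R^\circ\cap R$ lies in $\varinjlim\hat R_h^\circ$; this is false, e.g.\ for $f=\tau$ in $R=K\langle\tau\rangle^\dagger$, since $|\tau|_{X_h,\sup}=|\pi|^{-1/h}>1$. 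So the sentence ``$U$ is an open neighborhood of $\hat X$ and hence contains some $X_{h_1}$ by Remark \ref{dagislim2}'' is exactly where the non-formal content lives, and it is missing: you must first prove the strict containment $\hat X\Subset_{X_1}U$, i.e.\ that enlarging the radius from $1$ to $\lambda'>1$ absorbs the closure of $\hat X$. This is what the paper supplies via \cite[Proposition 2.1.16]{ayoub-rig}, in the form $\hat X\subset U(f/1)\Subset_{X_1}U(\pi f/1)$, before invoking coinitiality; your proof is repaired by inserting the analogous statement $\{|f|\leq 1\}\Subset_{X_1}\{|f|\leq\lambda'\}$, for instance by writing $\lambda'\geq|\pi|^{-1/d}$ and applying that proposition to $f^d$.

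A second, minor point: convergence of $\sum_k b_k f^k$ in the Banach algebra $\hat R_{h_1}$ does not follow directly from the spectral bound $|f|_{X_{h_1},\sup}\leq\lambda'$, since the residue norm may dominate the sup-seminorm. Choose $c\in K^\times$ with $\lambda'\leq |c|<\lambda$; then $f/c$ is power-bounded in $\hat R_{h_1}$, so $|f^k|\leq C|c|^k$ for some constant $C$ and the series converges because $|b_k||c|^k\to 0$. This is routine bookkeeping, unlike the previous point.
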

	
	\begin{proof}
		Let $X =\Spa^\dagger R$ be an affinoid dagger space. A continuous map $K\langle\tau\rangle\ra \hat{R}$ amounts to the choice of an element $s\in \hat{R}^\circ$ and it preserves the dagger structures if and only if for any $n$ the induced map $K\langle \pi^{1/n}\tau\rangle\ra \hat{R}$  factors over a map $K\langle\pi^{1/n}\tau\rangle\ra \hat{R}_h$ for some $h$, that is,  if and only if $\pi s^n \in \varinjlim \hat{R}_h^\circ$ for all $n$. 
		Since  $R $ is a $K$-algebra, integrally closed in $\hat{R}$ (see \cite[Theorem 2]{BDR}) we deduce that such an $s$ lies in $\hat{R}^\circ\cap R $. 
		
		Vice-versa, we claim that any element $s\in \hat{R}^\circ\cap R^\dagger$ satisfies the condition. 		
		If $f\in \hat{R}_1\cap R^\circ$ then we deduce from \cite[Proposition 2.1.16]{ayoub-rig} the following inclusions in $X_1=\Spa \hat{R}_1$:
		\[
		\hat{X}\subset U(f/1)\Subset_{X_1}U(\pi f/1).
		\]
		Since $\{X_h\}$ is coinitial among the rational subvarieties $W$ of $X_1$ such that $\hat{X }\Subset_{X_1}W$ we deduce in particular that $X_h\subset U(\pi f/1)$ for some $h$ that is,  $\pi f\in\varinjlim \hat{R}_h^\circ$. This proves the inclusions
		\[
		\pi(R \cap \hat{R}^\circ)\subset  \varinjlim \hat{R}_h^\circ\subset R \cap \hat{R}^\circ
		\]
		and therefore our claim. 
	\end{proof}

	The following proposition already appears in \cite[Theorem 2.3]{etesse-rel2}. We present here an alternative proof based on the methods developed in the Appendix.
	
	\begin{prop}\label{dagcov1}
		Let $X $ be 
		an affinoid dagger space with limit $\hat{X}$. 
		\begin{enumerate}
			\item The functor $U \mapsto \hat{U}$ defines an equivalence between the categories of inclusions of rational subspaces in   $X $ and in $\hat{X}$.
			\item The functor $U\mapsto \hat{U}$ defines an equivalence between the categories of finite \'etale affinoid spaces over   $X$ and over $\hat{X}$.
		\end{enumerate}	
	\end{prop}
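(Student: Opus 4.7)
My plan is to exploit the identification, proved in the Appendix, of the compactification $X_\infty := \Spa(R, R^{+})$ of $\hat{X}$ with the weak inverse limit $\varprojlim_h X_h$ in the sense of Huber. This places rational subspaces and finite \'etale covers of $X$, $\hat{X}$ and $X_\infty$ on a common footing and reduces both parts of the proposition to standard results on weak inverse limits of adic spaces.

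First I would dispose of Part (1). Essential surjectivity of $U\mapsto\hat{U}$ and independence of the chosen presentation are already the content of Proposition \ref{eqrational}. For full faithfulness, suppose $\hat{U}\subseteq\hat{V}$ where $V=U(f_1,\ldots,f_n/g)$ has generators $f_i,g\in R$ generating the unit ideal. The inclusion $\hat{U}\subseteq\hat{V}$ translates into the conditions $|f_i(x)|\leq|g(x)|$ being satisfied on the (identical) underlying topological set $|\hat{U}|=|U|\subseteq|\hat{X}|$; by the universal property of rational dagger subspaces explicit in Proposition \ref{eqrational}, this yields the unique desired dagger inclusion $U\hookrightarrow V$.

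For Part (2), I would combine Proposition \ref{moraffdag} with Huber's result that finite \'etale covers of a weak inverse limit of adic spaces are classified by finite \'etale covers of some term of the tower. Applied here, this gives a canonical equivalence $\mathrm{FEt}(X_\infty)\simeq\varinjlim_h\mathrm{FEt}(X_h)$. Since $X_\infty$ and $\hat{X}$ share the coordinate ring $\hat{R}$, and since finite \'etale covers over a Tate adic space $\Spa(A,A^{+})$ depend only on $A$, the canonical functor $\mathrm{FEt}(\hat{X})\to\mathrm{FEt}(X_\infty)$ is an equivalence. It will then suffice to establish a compatible equivalence $\mathrm{FEt}(X)\simeq\varinjlim_h\mathrm{FEt}(X_h)$: in one direction, a finite \'etale dagger $R$-algebra is finitely presented and its defining relations lie in some $\hat{R}_h$ (using Noetherianity of $K\langle\underline{\tau}\rangle^\dagger$); in the other, the base-change $S_h\otimes_{\hat{R}_h}R$ of a finite \'etale $\hat{R}_h$-algebra is a quotient of $R\langle\underline{\sigma}\rangle^\dagger$ by finitely many overconvergent relations, hence a dagger algebra, finite \'etale over $R$.

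The hard part will be verifying the quasi-inverse property on morphisms, equivalently that a morphism of finite \'etale dagger $R$-algebras descends to a morphism over $\hat{R}_h$ for sufficiently large $h$; this again rests on the finite-presentation argument together with compatibility of base-change along $\hat{R}_h\to R$. Once settled, $\mathrm{FEt}(X)\to\mathrm{FEt}(\hat{X})$ is a composite of equivalences, proving the claim. An alternative concluding move, perhaps cleaner, is to invoke the Henselianness of dagger algebras along $(\pi)$ (equivalently, the Monsky--Washnitzer weak completeness), so that $\mathrm{FEt}(R)\simeq\mathrm{FEt}(\hat{R})$ follows directly from the standard invariance of finite \'etale covers under Henselian completion.
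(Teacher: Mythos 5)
Your treatment of part (1) and the skeleton of part (2) coincide with the paper's: the paper likewise descends finite \'etale objects and morphisms over $\hat{X}$ to a finite stage of the tower $\{X_h\}$ (it cites Scholze's Lemma~7.5, which is the same finite-level descent you extract from Huber's weak-limit formalism through Proposition~\ref{moraffdag}) and then transports the result back to the dagger side. The gap is in the transport back. You assert that the base change $S_h\otimes_{\hat{R}_h}R$ of a finite \'etale $\hat{R}_h$-algebra ``is a quotient of $R\langle\underline{\sigma}\rangle^\dagger$ by finitely many overconvergent relations, hence a dagger algebra, finite \'etale over $R$''. This is exactly the point where the paper does its real work, and it is not routine: beyond choosing power-bounded module generators and polynomial relations, one must prove that the overconvergent quotient agrees with the algebraic base change and, crucially for Proposition~\ref{dagcov1}, that its completion is $\mcO(\hat{U})$, i.e.\ that the construction produces a dagger structure on the given cover $\hat{U}$, compatibly with $X$. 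The paper settles precisely this geometrically: it shows that $U_h:=U_1\times_{X_1}X_h$ is a presentation of a dagger structure on $\hat{U}$ in the sense of the Appendix, which reduces to the interior condition $\hat{U}\subset\Int(U_1)$; this follows from $\Int(U_1/X_1)=U_1$ for the finite map $f\colon U_1\ra X_1$ (via Corollary~\ref{berkint}) together with the formula $\Int(U_1)=f^{-1}\Int(X_1)$ of Corollary~\ref{berkform}, since $\hat{U}$ factors through $\hat{X}\subset\Int(X_1)$. Your sketch needs either this argument or a careful algebraic substitute; moreover, in the paper's formulation the dagger-side objects are dagger spaces whose completion is finite \'etale, so the identification of these with finite \'etale $R$-algebras (cf.\ Remark~\ref{fet}) is part of what must be proved, not an input. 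By contrast, the morphism-level descent you single out as ``the hard part'' is comparatively formal once the finite-level descent is invoked.

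The alternative ending via Henselianness does not work as stated. In $R$ the element $\pi$ is invertible, so the Henselian pair in question is $(R^{+},\pi)$ (or an integral model), which is indeed Henselian, being a filtered colimit of $\pi$-adically complete pairs. But the ``standard invariance of finite \'etale covers under Henselian completion'' compares finite \'etale covers of $R^{+}$ with those of $R^{+}/\pi$, whereas you need the comparison of finite \'etale covers of $R=R^{+}[1/\pi]$ with those of $\hat{R}$, a Fujiwara--Gabber/Elkik-type statement for the punctured Henselian pair. That theorem is of a different order of difficulty, and its usual hypotheses (e.g.\ Noetherianness of the pair) fail when the valuation of $K$ is not discrete, a case the paper explicitly allows; so this shortcut should be dropped or replaced by a precise reference valid in this generality.
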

	
	\begin{proof}
		The first claim follows from Proposition \ref{eqrational}. 
		For the second claim, by \cite[Lemma 7.5]{scholze} we know that, up to shifting indices, any map  $\hat{U}\ra \hat{V}$ of   finite \'etale affinoid spaces over $\hat{X}$ is induced by a map $U_1\ra V_1$ of finite \'etale spaces over $X_1$ with $\hat{U}=U_1\times_{X_1}\hat{X}$ and $V=V_1\times_{X_1}\hat{X}$. Let $U_h$ be $U_1\times_{X_1}X_h$. We are left to prove that $\varinjlim\mcO(U_h)$ is a dagger algebra.

		We now use the equivalence between dagger affinoid spaces and their presentations, proved in the Appendix (see Proposition \ref{moraffdag}). In particular, we can alternatively prove that the sequence $U_h$ is a presentation of $\hat{U}$. It suffices to show that  $\hat{U}$ lies in $\Int(U_1)$ (see the notations of Definition \ref{interiors}).  
		Since $f\colon U_1\ra X_1$ is finite, by \cite[Corollary 2.5.13(i)]{berkovich} and Corollary \ref{berkint} we deduce that $\Int(U_1/X_1)=U_1$ and hence by Corollary \ref{berkform} we get $\Int(U_1)=f^{-1}\Int(X_1)$. We then need to prove that the image of $\hat{U}$ lies in the interior of $X_1$ and this is clear as it factors over $\hat{X}$ lying in $\Int( X_1)$.
	\end{proof}

	\begin{rmk}\label{fet}
		If $Y\ra X $ is a finite \'etale morphism of affinoid dagger spaces, the dagger algebra $\mcO^\dagger(Y )$ associated with $Y $ is of the form $\varinjlim(\hat{S}_1\otimes_{\hat{R}_1}\hat{R}_h)=\hat{S}_1\otimes_{\hat{R}_1}\mcO^\dagger(X )$ for some finite \'etale algebra $\hat{S}_1$ over $\hat{R}_1$ (up to shifting indices). In particular, it is finite \'etale over $\mcO^\dagger(X )$.
	\end{rmk}

	\begin{cor}\label{dagcov}Let $X $ be an affinoid dagger variety with   limit $\hat{X}$. 
		If $\mcV$ is an \'etale cover of $\hat{X}$, then it can be refined into another cover $\hat{\mcU}$ induced by an \'etale cover $\mcU$ of $X$.
	\end{cor}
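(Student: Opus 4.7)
The definition of cover in the dagger setting is inherited from completion: a collection of morphisms $\mcU = \{U_i \to X\}$ of affinoid dagger varieties is a cover of $X$ precisely when $\hat{\mcU} = \{\hat{U}_i \to \hat{X}\}$ covers $\hat{X}$. My goal is therefore to produce étale morphisms $U_i\to X $ of affinoid dagger varieties whose completions together form a refinement of the given étale cover $\mcV$ of $\hat{X}$, and I will do so by combining a preliminary refinement on the rigid side with the two equivalences of Proposition \ref{dagcov1}.

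The first step is to refine $\mcV$ on the rigid side into a cover of a particularly convenient form. By the local structure theorem for étale morphisms of affinoid (adic) rigid varieties, each étale map $V \to \hat{X}$ is, locally on $V$, the composition of an open immersion $V'\hookrightarrow W$ with a finite étale morphism $W \to \hat{U}$, where $\hat{U}$ is a rational open subspace of $\hat{X}$. Refining further by rational subspaces of $W$, I may assume $V'$ is itself rational in $W$. Replacing $\mcV$ by this refinement, I may assume without loss of generality that $\mcV=\{V_i\}$ with each $V_i\subset W_i$ a rational open, $W_i\to \hat{U}_i$ finite étale, and $\hat{U}_i\subset \hat{X}$ a rational open.

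The second step is to lift this data from $\hat{X}$ to $X $. By Proposition \ref{dagcov1}(1), each $\hat{U}_i$ is the completion of a unique rational dagger subspace $U_i\subset X $. By Proposition \ref{dagcov1}(2) together with Remark \ref{fet}, the finite étale map $W_i\to \hat{U}_i$ is the completion of a unique finite étale morphism $W_i^\dagger\to U_i$ of affinoid dagger varieties. Applying Proposition \ref{dagcov1}(1) once more, now to the dagger space $W_i^\dagger$, I lift the rational open $V_i\subset W_i$ to a rational open $V_i^\dagger\subset W_i^\dagger$. The composition $V_i^\dagger\hookrightarrow W_i^\dagger\to U_i\hookrightarrow X $ is then étale, and its completion is the original $V_i\to \hat{X}$. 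Hence $\mcU:=\{V_i^\dagger\to X \}$ is an étale family of affinoid dagger varieties whose completion $\hat{\mcU}$ refines $\mcV$ and still covers $\hat{X}$, so it is an étale cover of $X $ with the required property.

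The only non-formal input is the local factorization of an étale morphism of rigid analytic (adic) varieties as an open rational immersion after a finite étale map; this is a classical result in Huber's theory and is of the same nature as the ingredient \cite[Lemma 7.5]{scholze} already invoked in the proof of Proposition \ref{dagcov1}, so no genuine obstacle arises here.
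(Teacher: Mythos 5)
Your argument is correct and is essentially the paper's own proof, spelled out in more detail: both refine the given étale cover using the local structure of étale morphisms (rational immersions composed with finite étale maps over rational opens) and then lift each piece to the dagger side via the two equivalences of Proposition \ref{dagcov1}, concluding that the lifted family is a cover of $X$ by the very definition of covers of dagger spaces. No substantive difference from the paper's route.
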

	
	\begin{proof}
		By Proposition \ref{dagcov1} and the fact that any \'etale cover can be refined into a new one which is a composition of rational embeddings and finite \'etale maps, we can find a refinement $\hat{\mcU}$ of the cover which is the limit of a family $\mcU$ of maps of dagger spaces. This is also a cover of $X$ by definition. 
	\end{proof}

	\begin{cor}\label{eqtop}
		Let $X $ be an affinoid dagger variety with  limit $\hat{X}$. 
		The maps of the small rational and \'etale sites $\hat{X}\ra X $ induces  equivalences on the associated topoi.
	\end{cor}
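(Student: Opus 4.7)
The strategy is to apply Verdier's comparison lemma to the morphism of sites $\hat{X}\to X$ given on opens by the functor $U\mapsto\hat{U}$ (in the rational, resp.\ \'etale, topology). Concretely, this lemma says that a continuous functor between sites induces an equivalence on topoi provided (i) it is fully faithful, (ii) every object of the target is covered by objects in the essential image, and (iii) a family in the source is a cover precisely when its image is a cover in the target.

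For the rational topology, Proposition \ref{dagcov1}(1) gives an equivalence of the underlying categories of rational opens, so (i) and (ii) are automatic; meanwhile Definition \ref{defdag} declares a family of rational maps in the dagger setting to be a cover exactly when its image on the limit side is a topological cover, which supplies (iii). Thus the morphism of sites is in fact already an equivalence of sites, and a fortiori of topoi.

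For the \'etale topology, condition (ii) is exactly the content of Corollary \ref{dagcov}, and (iii) is built into the definition of dagger \'etale covers. Full faithfulness of $U\mapsto\hat{U}$ I would establish on a convenient basis: any \'etale morphism of (dagger or rigid) affinoids is locally a composition of a rational open immersion and a finite \'etale map, and on such compositions Proposition \ref{dagcov1}(1)--(2) together with Remark \ref{fet} produce fullness and faithfulness directly. A gluing argument, legitimate thanks to the equivalence on the rational site already proved in the previous paragraph, then extends fullness and faithfulness to general \'etale morphisms.

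\textbf{Main obstacle.} The delicate step is precisely this last globalization: given \'etale dagger spaces $U,V\to X$ and a morphism $\hat{U}\to\hat{V}$ of their limits over $\hat{X}$, one must combine dagger lifts of the map produced locally over a rational open cover of $\hat{U}$ into a single dagger morphism $U\to V$. Uniqueness of the local lifts (coming from faithfulness on the basis) ensures that they agree on overlaps, and the sheaf property for dagger algebras on the rational site (Proposition \ref{eqrational}) then lets one glue; the book-keeping here is the only nontrivial component of the argument.
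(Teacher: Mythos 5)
Your proposal is correct in outline and is, in substance, the paper's argument: the paper settles this corollary in one line by invoking the comparison criteria of \cite[Appendix A]{huber}, which are precisely the site-comparison conditions you verify, fed by Proposition \ref{dagcov1}, Corollary \ref{dagcov} and the definition of dagger covers. Two remarks on the execution. First, the criteria the paper cites (like the refined forms of Verdier's comparison lemma) only require the lifting and separation conditions \emph{locally}, i.e.\ after passing to a cover of the source; so the globalization you single out as the main obstacle --- gluing local dagger lifts into an honest morphism $U\ra V$ --- can simply be bypassed: your local lifts, unique by faithfulness of the completion functor, already suffice. Second, the step you describe as ``direct'' is the one that genuinely needs an argument: Proposition \ref{dagcov1}(2) compares finite \'etale objects over a \emph{fixed} affinoid dagger base, whereas in the small \'etale site you must lift morphisms $\hat{U}\ra\hat{V}$ whose source $U$ is merely \'etale over $X$. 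The standard fix is to rewrite $\Hom_X(U,V)$, for $V$ finite \'etale over (a rational open of) $X$, as the set of sections over $U$ of $V\times_X U\ra U$, observe via Remark \ref{fet} that this fibre product is a finite \'etale dagger space over $U$ with completion $\hat{V}\times_{\hat{X}}\hat{U}$, and then apply Proposition \ref{dagcov1}(2) over the base $U$; combined with Proposition \ref{dagcov1}(1) for rational immersions, this yields the (local) fullness you need. With these two points made explicit your argument is complete and coincides with the paper's intended verification of Huber's criteria.
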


	\begin{proof}
		It suffices to use the criteria of \cite[Appendix A]{huber}.
	\end{proof}

	\begin{dfn}
		Let $X =\Spa^\dagger R$ be an affinoid dagger space and $M$ be a finite $R $-module. We define $\tilde{M}$ to be the sheaf $\tilde{M}=M\otimes_{R }\mcO^\dagger$ on $\hat{X}$. 
		A \emph{coherent $\mcO^\dagger$-module} over a dagger space $X $ is a $\mcO^\dagger$-module in the category of  sheaves of $K$-algebras over $\hat{X}$ isomorphic to $\tilde{M}$ for some finite $R $-module $M$.
	\end{dfn}
	
	\begin{rmk}
		In \cite[Theorem 2.16]{gk-over} it is proved that the notion of coherent sheaves over $\Spa^\dagger R $  coincides with the notion of coherent modules over the ringed space $(\Spa \hat{R},\mcO^\dagger)$ of \cite[Chapter 0, Section 5.3]{EGAI}.
	\end{rmk}
	
	\begin{prop}\label{cohocoh}
		Let $X $ be an affinoid dagger space and $\mcF$ be a coherent $\mcO^\dagger$-module over it.
		\begin{enumerate}
			\item({\cite[Proposition 3.1]{gk-over}}) $H^i_{\an}(X ,\mcF)=0$ if $i>0$.
			\item $\mcF$ extends to an \'etale sheaf over $X $ defined by putting $f^*\mcF=f^{-1}\mcF\otimes_{f^{-1}\mcO^{\dagger}_X}\mcO^\dagger_Y$ for each \'etale map $Y \ra X $.
			\item $H^i_{\et}(X ,\mcF)=0$ if $i>0$.
		\end{enumerate}
	\end{prop}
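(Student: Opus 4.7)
Part (1) is quoted from \cite[Proposition 3.1]{gk-over}, so the plan is to deduce (2) and (3) from it using the topological results established earlier in this section. For (2), I will verify the sheaf condition for the two classes of morphisms that generate étale covers; for (3), I will bootstrap from (1) via the equivalence of topoi of Corollary \ref{eqtop}.

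For (2), the formula $f^*\mcF = f^{-1}\mcF \otimes_{f^{-1}\mcO^\dagger_X} \mcO^\dagger_Y$ manifestly defines a presheaf on the small étale site of $X$, so what needs to be checked is the sheaf property on an étale cover $\{Y_i \to Y\}$. By Corollary \ref{dagcov}, any such cover can be refined by a composition of rational embeddings and finite étale maps, so it is enough to treat these two classes separately. For a rational cover the condition is a direct consequence of the fact that $\mcO^\dagger$ is a sheaf on the rational site (Proposition \ref{eqrational}) together with the acyclicity on rational intersections given by (1). For a finite étale map $Y' \to Y$, Remark \ref{fet} identifies the induced ring morphism $\mcO^\dagger(Y) \to \mcO^\dagger(Y')$ as a finite étale, hence faithfully flat, extension, and classical faithfully flat descent for modules applied to the finite $\mcO^\dagger(Y)$-module representing the restriction of $f^*\mcF$ yields both the sheaf property and the vanishing of higher Čech cohomology.

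For (3), the equivalence between the small rational and étale topoi of $X$ provided by Corollary \ref{eqtop} implies that sheaf cohomology computed on the two sites agrees. The étale sheaf $f^*\mcF$ constructed in (2) restricts on the rational site to the original coherent sheaf $\mcF$, since by Proposition \ref{eqrational} the dagger algebra of a rational open $U \subset X$ is already described by the same tensor-product formula; hence $f^*\mcF$ corresponds to $\mcF$ under the topos equivalence. Therefore $H^i_{\et}(X, f^*\mcF) = H^i_{\an}(X, \mcF) = 0$ for $i > 0$ by part (1).

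The main obstacle will be the reduction step in (2): one must verify that the sheaf-property checks on the two classes of covers fit together across the successive refinements produced by Corollary \ref{dagcov}, and that tensoring with $\mcO^\dagger_Y$ commutes with the fiber products appearing in the Čech complex of the finite étale layer, so that the descent datum for the coherent module $\mcF$ is indeed computed by the right formula. Once this compatibility is in place, the rest is formal.
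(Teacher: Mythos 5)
Your overall strategy (split étale covers into rational covers, handled by part (1), and finite étale maps, handled by faithfully flat descent via Remark \ref{fet}) is the same as the paper's, but your step (3) rests on a misreading of Corollary \ref{eqtop}, and that is a genuine gap. The corollary states that the map $\hat{X}\ra X$ induces equivalences of topoi \emph{for each topology separately} (rational topos of $\hat{X}$ with rational topos of $X$, and étale with étale); it does \emph{not} identify the rational topos of $X$ with its étale topos, and no such identification holds: a nontrivial finite étale surjection onto $X$ is an étale cover admitting no rational refinement. So the sentence ``sheaf cohomology computed on the two sites agrees'' cannot be extracted from Corollary \ref{eqtop}, and as written your part (3) does not go through. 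What is actually needed is a Čech-to-derived-functor comparison: since every étale cover of an affinoid can be refined by compositions of rational covers and finite étale covers, and $\mcF$ has vanishing positive Čech cohomology on both classes (by (1), resp.\ by descent), one concludes $H^i_{\et}(X,\mcF)=H^i_{\an}(X,\mcF)=0$ by a Cartan--Leray type argument. This is exactly the role played in the paper by \cite[Proposition 8.2.1]{fvdp} and the proof of \cite[Proposition 8.2.3(2)]{fvdp}; Corollary \ref{eqtop} enters only to transfer the question between $X$ and its limit $\hat{X}$, which the paper combines with the exactness of the Amitsur-type sequence $0\ra M\ra M\otimes_R S\rightrightarrows M\otimes_R(S\otimes_R^\dagger S)$ for surjective finite étale maps.

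The ``obstacle'' you flag at the end --- whether the fiber products in the Čech complex of the finite étale layer are computed by the ordinary tensor product, so that classical descent applies --- is real but is not left open in the paper: by Remark \ref{fet} the morphism $\mcO^\dagger(X)\ra\mcO^\dagger(Y)$ is finite étale, and for finite étale extensions the dagger tensor product coincides with the usual one (\cite[Lemma 1.10]{gk-over}); the exactness of the descent sequence is then \cite[Section I.3]{FGA}. With these two repairs --- replacing the false topos identification by the Čech-acyclicity criterion of \cite{fvdp}, and settling the dagger-versus-ordinary tensor product point via Remark \ref{fet} --- your argument becomes essentially the paper's proof.
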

	
	\begin{proof}
		Suppose $\mcF=\tilde{M}$ and let $R $ be the dagger algebra $\mcO^\dagger(X )$. By means of Corollary \ref{eqtop}, \cite[Proposition 8.2.1]{fvdp} and the proof of \cite[Proposition 8.2.3(2)]{fvdp} we are left to prove that or each surjective finite \'etale map  $\hat{Y}\ra \hat{X}$ the following sequence is exact 
		$$0\ra M\ra M\otimes_{R }S \rightrightarrows M\otimes_{R }(S \otimes_{R }^\dagger S )$$
		where we denote by  $S $ the dagger algebra associated with $Y$ (see Proposition \ref{dagcov1}). By Remark \ref{fet} the map $R \ra S $ is finite \'etale, and in particular the dagger tensor product coincides with the usual one (see \cite[Lemma 1.10]{gk-over}). The claim then follows from \cite[Section I.3]{FGA}.
	\end{proof}
	
	We now recall the definition of Gro\ss e-Kl\"onne of ``global" dagger spaces
	\begin{dfn}\label{dagsp}
		A \emph{ dagger space} is a pair $X =(\hat{X},\mcO^\dagger)$ where $\hat{X} $ is a rigid analytic space, and $\mcO^\dagger$ is  a sheaf of topological $K$-algebras on $\hat{X}$ such that for some affinoid open cover $\{\hat{U}_i\ra \hat{X}\}$ there are dagger structures $U_i $ on $\hat{U}_i$ with $\mcO^\dagger|_{\hat{U}_i}\cong\mcO^\dagger_{U_i}$ (see Proposition \ref{eqrational}). A morphism of dagger spaces is a morphism of the  underlying locally ringed spaces over $K$  (see \cite[Definition 2.12]{gk-over})  and the category they form is denoted by $\Rig^\dagger$. We say that the rigid space $\hat{X}$ is the \emph{limit} of $X $ and vice-versa we say that $X $ is a \emph{dagger structure} of $\hat{X}$. We say that $X $ [resp. a morphism $X \ra X' $] has the property $\mathbf{P}$ if $\hat{X}$ [resp. the induced morphism of rigid  spaces $\hat{X}\ra\hat{ X}'$] has the property $\mathbf{P}$.  Whenever $S$ is a dagger space, we denote by $\Rig^\dagger$ the category of rigid spaces over it. The category of smooth morphisms of dagger spaces $X \ra S$ to a  fixed dagger space $S $  is denoted by $\RigSm^\dagger\!/S$  and its full subcategory of affinoid objects by $\AffSm^\dagger\!/S$.  
		A collection of morphisms $\{X_i \ra X \}$ in $\RigSm/S $ is a \emph{cover} if the induced collection $\{\hat{X}_i\ra \hat{X}\}$  is a topological cover, that is,  if   $\bigsqcup \hat{X}_i\ra \hat{X}$ is surjective.
	\end{dfn}
	
	\begin{exm}
		Any  rigid variety without boundary has a  dagger structure by \cite[Theorem 2.27]{gk-over}. We  denote by $\mathbb{P}^{1\dagger}$ a  dagger variety having as limit the projective line $\mathbb{P}^1$.
	\end{exm}
	
	The following proposition is straightforward.
	
	\begin{prop} 
		Let $S $ be a  dagger space.  The functor 
		$$
		\begin{aligned}
		\Aff ^\dagger/S&\ra\Rig ^\dagger/S\\
		\Spa^\dagger R&\mapsto(\Spa \hat{R},\mcO^\dagger)
		\end{aligned}
		$$
		is   fully faithful and induces an equivalence of the associated open analytic and the \'etale  topoi.
	\end{prop}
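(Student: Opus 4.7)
The plan is to treat the two assertions separately: full faithfulness and equivalence of the associated topoi.

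For full faithfulness, I would unwind the definitions. By Definition \ref{dagsp}, a morphism in $\Rig^\dagger$ is a morphism of the underlying locally ringed spaces over $K$. For affinoid objects $\Spa^\dagger R'$ and $\Spa^\dagger R$, such a morphism is the data of a continuous map $\Spa\hat R\to\Spa\hat R'$ together with a compatible map of sheaves $\mcO^\dagger_{\Spa^\dagger R'}\to\mcO^\dagger_{\Spa^\dagger R}$ over $K$. On global sections this produces a $K$-algebra map $R'\to R$, and conversely any such a map extends to a sheaf map by Proposition \ref{eqrational}. By the Remark following Definition \ref{defdag} this recovers exactly the set $\Hom(\Spa^\dagger R',\Spa^\dagger R)$ in $\Aff^\dagger$, giving full faithfulness; it is then inherited by the slice categories over $S$.

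For the equivalence of open analytic topoi, I would invoke the comparison lemma (SGA4 III.4.1). The inclusion is continuous because covers in $\AffSm^\dagger\!/S$ are by definition cover-preserving into the global analytic topology (Definition \ref{dagsp}). Since by the very definition of a dagger space every object of $\Rig^\dagger\!/S$ admits a cover by affinoid dagger spaces, the site $\Aff^\dagger/S$ is dense in $\Rig^\dagger/S$; the comparison lemma then yields equivalence of the associated topoi, with explicit inverse given by left Kan extension.

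For the étale case, the same machinery applies once we know that any étale map to an affinoid dagger space can be refined by a composition of rational opens and finite étale maps coming from $\Aff^\dagger$. This is precisely Corollary \ref{dagcov} (combined with Proposition \ref{dagcov1} for the identification of rational and finite étale covers), so every étale cover in $\Rig^\dagger/S$ restricted to an affinoid dagger space can be refined by one from $\Aff^\dagger/S$. Applying the comparison lemma to the small étale sites then yields the desired equivalence.

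The main obstacle I expect is the verification in the étale case that the comparison lemma's hypotheses are met, specifically that the inclusion is continuous and that étale covers of an affinoid dagger space factor through affinoid dagger étale refinements, but this is exactly what Corollary \ref{dagcov} and Remark \ref{fet} package for us, making the argument essentially formal once the topos-theoretic setup is in place.
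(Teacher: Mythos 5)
Your proposal is correct and is exactly the routine argument the paper has in mind --- the paper offers no proof, simply declaring the proposition straightforward: full faithfulness by unwinding the definition of morphisms of locally ringed spaces against Proposition \ref{eqrational}, and the topos equivalences by the comparison lemma using that every dagger space is, by definition, covered by affinoid dagger spaces whose covers are detected on completions. The only cosmetic point is that Corollary \ref{dagcov} (which refines \'etale covers of the completion $\hat{X}$ by dagger covers, the input for the dagger-versus-rigid comparison) is not actually needed here: since any dagger space \'etale over an affinoid dagger space is itself covered by open affinoid dagger subspaces, the density hypothesis of the comparison lemma already holds entirely within the dagger world.
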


	From now on, we will use the term \emph{affinoid dagger space} also to indicate the objects in the essential image of the functor above. 
	
	We easily obtain also the following version.

	\begin{cor} Let $S $ be a  dagger space. 
		The functor 
		$$
		\begin{aligned}
		\AffSm^\dagger\!/S &\ra\RigSm^\dagger\!/S \\
		\Spa^\dagger R&\mapsto(\Spa \hat{R},\mcO^\dagger)
		\end{aligned}
		$$
		is   fully faithful and induces an equivalence of the associated open analytic and the \'etale  topoi.
	\end{cor}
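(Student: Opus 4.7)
The plan is to deduce this corollary directly from the previous proposition by restricting to the smooth subcategories. Full faithfulness is immediate: $\AffSm^\dagger\!/S$ is the full subcategory of $\Aff^\dagger/S$ whose structure morphism to $S$ is smooth, and likewise $\RigSm^\dagger\!/S$ is a full subcategory of $\Rig^\dagger/S$. Since smoothness is defined via the corresponding property on the limit (Definition \ref{dagsp}), the functor sends smooth objects to smooth objects, so it restricts and remains fully faithful.

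For the equivalence of topoi, I would apply a comparison-lemma argument (e.g.\ the criterion of \cite[Appendix A]{huber} used in Corollary \ref{eqtop}). There are two things to check. First, the essential image is topologically dense, meaning every smooth dagger space $X\ra S$ admits an open cover by affinoid smooth dagger spaces over $S$: this is exactly built into Definition \ref{dagsp}, and the open immersions $U_i\hookrightarrow X$ are smooth so the compositions $U_i\ra S$ are smooth. Second, the topology is induced, meaning that every analytic (resp.\ \'etale) cover of $(\Spa \hat R,\mcO^\dagger)$ in $\RigSm^\dagger\!/S$ can be refined to a cover coming from $\AffSm^\dagger\!/S$. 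For the analytic topology this follows from the fact that every open cover of an affinoid rigid space admits a refinement by rational subspaces, combined with Proposition \ref{eqrational} (rational subspaces lift to the dagger side). For the \'etale topology, one uses in addition Corollary \ref{dagcov}, which provides refinements of \'etale covers of $\hat X$ by \'etale covers of $X$ in the dagger category.

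Combining these two points with the previous proposition, the induced morphisms of topoi are equivalences. I expect the main (minor) obstacle to be purely bookkeeping: checking that the refinements produced by Proposition \ref{eqrational} and Corollary \ref{dagcov} consist of smooth morphisms over $S$, which however follows automatically since open immersions and \'etale morphisms are smooth, and smoothness is stable under composition, so any refinement of a cover of a smooth object over $S$ is again a cover by smooth objects over $S$.
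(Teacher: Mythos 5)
Your proposal is correct and matches the paper's treatment: the paper gives no proof at all (it merely asserts that the corollary is ``easily obtained'' from the preceding proposition), and your argument --- full faithfulness by restriction to the full smooth subcategories, plus the comparison criterion of \cite[Appendix A]{huber} using that every smooth dagger space over $S$ is covered by smooth affinoid dagger spaces over $S$ --- is exactly the intended elaboration, in the same spirit as the paper's proof of Corollary \ref{eqtop}. One minor remark: Corollary \ref{dagcov} is not actually needed here, since covers in $\RigSm^\dagger\!/S$ are by definition already families of dagger morphisms, so refining them by affinoid objects only requires affinoid open covers of their members (Definition \ref{dagsp} together with Proposition \ref{eqrational}), and smoothness over $S$ is preserved as you note.
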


	\section{Approximation Results}\label{approx}
	
	From now on, we fix a  dagger space $S $ with limit $\hat{S}$. 
	In this section, we recall the analytic version of the  inverse function theorem and we  use it as an alternative to Artin's approximation theorem for smooth dagger algebras \cite{bosch-artin}. As a matter of fact, it induces a weaker form of this theorem (see Corollary \ref{artins}) but also a cubical version of it that we will need in what follows (see Propositions \ref{2dH=02} and \ref{sollevacocu}).
	
	The reader who believes in Proposition \ref{sollevacocu} can safely skip this technical section.
	
	\begin{prop}
		\label{roots3}
		Let $R $ be a  dagger algebra with completion $\hat{R}$. If an element $\xi$ of $\hat{R}$ is algebraic over $\Frac R $ then it lies in $R $.
	\end{prop}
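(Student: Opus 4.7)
The plan is to reduce the statement to the integral closedness of $R$ in $\hat{R}$, which is the content of \cite[Theorem 2]{BDR} already cited in the proof of Proposition~\ref{daggerballrep}.

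Starting from an algebraic relation for $\xi$ over $\Frac R$, clearing denominators produces a nonzero polynomial $P(T)=a_n T^n+a_{n-1}T^{n-1}+\cdots+a_0\in R[T]$ with $P(\xi)=0$ and $a_n\neq 0$. Multiplying the identity $P(\xi)=0$ by $a_n^{n-1}$ and setting $\eta:=a_n\xi\in\hat{R}$, one finds that $\eta$ is a root of the monic polynomial
$$Q(T)=T^n+a_{n-1}T^{n-1}+a_n a_{n-2}T^{n-2}+\cdots+a_n^{n-1}a_0\in R[T],$$
so $\eta$ is integral over $R$. By integral closedness this already forces $\eta\in R$.

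It remains to pass from $\eta=a_n\xi\in R$ (with $a_n\in R$ and $\xi\in\hat{R}$) to $\xi\in R$; I expect this to be the main obstacle, since inverting $a_n$ is harmless inside $\hat{R}$ but not \emph{a priori} inside $R$. The natural tool is the faithful flatness of the inclusion $R\hookrightarrow\hat{R}$, a standard feature of dagger algebras (cf.\ the discussion around \cite[Paragraph 1.7]{gk-over} and \cite{mw-fc1}), which yields the saturation identity $(a_n)\hat{R}\cap R=(a_n)R$. One then writes $\eta=a_n\zeta$ with $\zeta\in R$, so that $a_n(\xi-\zeta)=0$ in $\hat{R}$, and if $a_n$ is a non-zero-divisor in $\hat{R}$ one concludes $\xi=\zeta\in R$. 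To handle the possibility that $a_n$ is a zero-divisor I would start from a polynomial $P$ of \emph{minimal} degree vanishing at $\xi$, so that the leading coefficient cannot annihilate $R[\xi]$ without producing a lower-degree relation and contradicting minimality; alternatively, one can localize at each minimal prime of $\hat{R}$, treat the resulting domain situation, and glue the pointwise conclusions back together using the Noetherianity of $\hat{R}$.
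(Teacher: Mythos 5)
Your core argument is correct and takes a genuinely different route from the paper in its second half. Both proofs use \cite[Theorem 2]{BDR} for the first step: you multiply by the leading coefficient to make $a_n\xi$ integral over $R$ and invoke integral closedness of $R$ in $\hat{R}$; the paper instead deduces from the same theorem that $\xi\in\Frac R$. The real content is then the descent from ``$\xi\in\hat R$ and $d\xi\in R$ for some nonzero $d\in R$'' to ``$\xi\in R$''. Here the paper argues geometrically: it views $\xi$ as a meromorphic function on $X_1$, introduces its ideal of denominators, and uses specializations in the adic space together with quasi-compactness of $X_1$ and the coinitiality of the system $\{X_h\}$ (a Cantor-type intersection argument) to show that $\xi$ is in fact regular on some $X_h$, hence lies in $\hat R_h\subset R$. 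You argue purely algebraically, via faithful flatness of $R\ra\hat R$, which gives the saturation $a_n\hat R\cap R=a_nR$ and (by tensoring the injection $R\xrightarrow{a_n}R$ with the flat module $\hat R$) the fact that a non-zero-divisor of $R$ stays a non-zero-divisor in $\hat R$. Your route is shorter and isolates exactly two inputs (integral closedness and faithful flatness of the completion, the latter indeed available in the literature on dagger/weakly complete algebras), while the paper's route avoids the flatness theorem and yields the sharper statement that $\xi$ extends to an explicit strict neighborhood $X_h$, which is closer in spirit to the ``prolongement'' results it builds on.

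The one weak point is your contingency plan for $a_n$ being a zero-divisor: neither proposed fix works as stated. The minimal-degree argument only shows (via the degree-one relation $a_nT-a_n\zeta$) that you may assume $\deg P=1$, after which you are stuck in exactly the same place; and localizing $\hat R$ at minimal primes leaves the dagger structure, and hence \cite[Theorem 2]{BDR}, behind, with no clear way to glue. In fact no fix can exist in that generality: if $R$ is a product of two dagger algebras and $\xi=(\xi_1,0)$ with $\xi_1\in\hat R_1$ not overconvergent, then $\xi$ is killed by the nonzero linear polynomial $(0,1)\,T$ over $R$, yet $\xi\notin R$; so for non-domains the statement requires reading ``algebraic over $\Frac R$'' as integral over the total ring of fractions (the paper itself implicitly treats $R$ as a domain, e.g.\ when it inverts $R\setminus\{0\}$ and speaks of the ideal of denominators). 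With that reading the difficulty evaporates and your argument closes cleanly: take a monic polynomial for $\xi$ over $\Frac R$ and clear denominators by a common denominator $d$, which is by definition a non-zero-divisor of $R$; then $d\xi$ is integral over $R$, hence in $R$ by \cite[Theorem 2]{BDR}, and flatness makes $d$ a non-zero-divisor of $\hat R$, so the saturation argument gives $\xi\in R$. So you should replace the leading-coefficient normalization and the zero-divisor discussion by this denominator-clearing step; the rest of your proof stands.
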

	
	\begin{proof}
		We denote by $X $ the space $\Spa^\dagger R$. Since $R $ is algebraically closed in ${\hat{R}}$ (see \cite[Theorem 2]{BDR}) we conclude that $\Frac R $ is algebraically closed in $( R \setminus\{0\})^{-1}\hat{R}$ and therefore $\xi\in\Frac R $. 
		We can also assume $\xi\in \Frac \hat{R}_1$ up to shifting indices. 
		
		Let $I_{\xi}=(d_1,\ldots,d_n)$ be the ideal of denominators of $\hat{R}_1$ associated with the meromorphic  function $\xi$ (see \cite[Lemma 4.6.5]{fvdp}) and let $V(I_{\xi})$ be the induced (Zariski) closed subvariety of $X_1=\Spa \hat{R}_1$. From now on, we denote by $T^c$ the closure of a subset $T$ in $X_1$. We recall that $\xi$ is analytic around a point $x$ if and only if $(I_{\xi})_x=\mcO_{x}$.   Since $\mcO_{x}$ is local with maximal ideal equal to the support of the valuation at $x$ (see \cite[Lemma 1.6(i)]{huber2}) we deduce that $\xi$ is regular around $x$ if and only if   $x\notin V(I_\xi)$ that is,  if $|d_i(x)|\neq0$ for some $i$. By \cite[Lemma 1.1.10]{huber}
		if $x\in{\{y\}}^c$ for some point $y$ then $|d_i(x)|=0$ if and only if $|d_i(y)|=0$. Using \cite[Remark 2.1(iii)]{huber1} 
		and the regularity of $\xi$ on $\hat{X}$ we 
		then deduce that ${\hat{X}}^c\cap V(I_\xi)=\emptyset$. On the other hand, 
		by \cite[Lemma 1.5.10(1a)]{huber} this  set  ${\hat{X}}^c\cap V(I_\xi)$ coincides with the intersection of the nested closed subsets $\{{X}^c_h\cap V(I_{\xi})\}_h$ inside the quasi-compact space $X_1$. From Cantor's intersection theorem (see e.g. \cite[Theorem 3-5.9]{munkres}) we conclude $X_h\cap V(I_\xi)=\emptyset$  for $h$ large enough and hence  $\xi$ is regular on $X_h$, as wanted.
	\end{proof}
	
	We obtain in particular the following fact.
	
	\begin{cor}\label{invdag}
		Let $R $ be a dagger algebra with completion $\hat{R}$. If $f\in R $ is invertible in $\hat{R}$ then it is invertible in $R $.
	\end{cor}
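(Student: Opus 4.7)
The plan is to reduce this to the immediately preceding Proposition \ref{roots3}, which states that any element of $\hat{R}$ that is algebraic over $\Frac R$ already lies in $R$.

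First I would observe that the element $f^{-1} \in \hat{R}$ satisfies the polynomial equation $fX - 1 = 0$, whose coefficients lie in $R \subset \Frac R$. Since $f$ is a unit in $\hat{R}$, it is in particular nonzero in $\hat{R}$, and as $R \hookrightarrow \hat{R}$ is injective, $f$ is nonzero in $R$, so the polynomial $fX - 1$ is of genuine degree $1$ over $\Frac R$. Hence $f^{-1}$ is algebraic (in fact of degree one) over $\Frac R$.

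Applying Proposition \ref{roots3} to $\xi = f^{-1}$ then gives $f^{-1} \in R$, which is exactly the conclusion. There is essentially no obstacle: all the work is already contained in the preceding proposition, and the corollary amounts to noting that a multiplicative inverse automatically satisfies a degree-one polynomial relation over $R$.
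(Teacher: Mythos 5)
Your proof is correct and is exactly the intended argument: the paper states this corollary as an immediate consequence of Proposition \ref{roots3}, with $f^{-1}\in\hat{R}$ being algebraic (of degree one, via $fX-1=0$) over $\Frac R$ and hence lying in $R$. Nothing further is needed.
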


	We recall the following  version of the inverse mapping theorem in the analytic context.

	\begin{prop}[{{\cite[Corollary A.2]{vezz-fw}}}]\label{implicitC}
		Let $\hat{R}$ be a non-archimedean  Banach  $K$-algebra, let $ {\sigma}=(\sigma_1,\ldots,\sigma_n)$ and $ {\tau}=(\tau_1,\ldots,\tau_m)$ be two systems of coordinates, let $\bar{\sigma}=(\bar{\sigma}_1,\ldots,\bar{\sigma}_n)$ and $ \bar{\tau}=(\bar{\tau}_1,\ldots,\bar{\tau}_m)$ two sequences of elements of $\hat{R}$ and let $ {P}=(P_1,\ldots,P_m)$ be a collection of polynomials in $\hat{R}[ {\sigma}, {\tau}]$ such that $ {P}( {\sigma}=\bar{\sigma}, {\tau}=\bar{\tau})=0$ and $\det(\frac{\del P_i}{\del \tau_j})( {\sigma}=\bar{\sigma}, {\tau}=\bar{\tau})\in \hat{R}^\times$. There exists a unique collection $ {F}=(F_1,\ldots,F_m)$ of $m$ formal power series in $\hat{R}[[ {\sigma-\bar{\sigma}}]]$ such that $ {F}( {\sigma}=\bar{\sigma})=\bar{\tau}$ and $ {P}( {\sigma}, {F}( {\sigma}))=0$ in $\hat{R}[[ {\sigma-\bar{\sigma}}]]$ and they have a positive radius of convergence around $\bar{\sigma}$. 
	\end{prop}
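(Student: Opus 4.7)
First I would translate variables, replacing $\sigma$ by $\sigma+\bar\sigma$ and $\tau$ by $\tau+\bar\tau$, to reduce to the case $\bar\sigma=0$, $\bar\tau=0$. The hypotheses become $P(0,0)=0$ and the Jacobian $J:=(\partial P_i/\partial \tau_j)(0,0)$ an invertible matrix in $M_m(\hat{R})$, so that $J^{-1}$ exists with bounded norm. Writing $P(\sigma,\tau)=J\tau+R(\sigma,\tau)$, every monomial of $R$ either contains some $\sigma_i$ or is of total degree at least $2$ in $\tau$.

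\textbf{Formal existence.} Define a sequence $F^{(n)}\in \hat{R}[\sigma]^m$ by $F^{(0)}=0$ and the Newton-type rule
\[
F^{(n+1)}\;=\;F^{(n)}-J^{-1}P\bigl(\sigma,F^{(n)}(\sigma)\bigr).
\]
An induction using the shape of $R$ above shows $F^{(n+1)}-F^{(n)}\in(\sigma)^{n+1}\hat{R}[[\sigma]]^m$, so the sequence is Cauchy for the $(\sigma)$-adic topology and converges to $F\in\hat{R}[[\sigma]]^m$ with $F(0)=0$ and $P(\sigma,F(\sigma))=0$. For uniqueness, if $G$ is another such formal solution then comparing $0=P(\sigma,F)-P(\sigma,G)$ coefficient by coefficient produces an identity $M(F-G)=0$ with $M\in M_m(\hat{R}[[\sigma]])$ of constant term $J$; invertibility of $J$ forces $M$ invertible and hence $F=G$.

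\textbf{Positive radius of convergence.} This is the substantive part. Fix a submultiplicative Banach norm on $\hat{R}$ and let $C\geq 1$ bound the norms of the coefficients of $P$ together with $\|J^{-1}\|$. Passing to the differences $G^{(n)}:=F^{(n)}-F^{(n-1)}$ one has
\[
G^{(n+1)}\;=\;-J^{-1}\bigl(P(\sigma,F^{(n)})-P(\sigma,F^{(n-1)})-J(F^{(n)}-F^{(n-1)})\bigr),
\]
which by Taylor expansion is linear in $\sigma$ times $G^{(n)}$ plus terms at least quadratic in the previous $G^{(k)}$'s. The standard majorant technique then applies: one shows by induction that the coefficients $\|F^{(n)}_\alpha\|$ are dominated termwise by the coefficients of the unique formal solution of a scalar quadratic equation of the form $y=A\sigma y+By^2+\sigma$ for suitable $A,B$ depending on $C$ and $m$. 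The latter has an explicit convergent solution in a disc of positive radius $\rho$, which yields $\|F_\alpha\|\rho^{|\alpha|}\to 0$ and hence convergence of $F$ on a polydisc of radius $\rho$ around the origin.

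\textbf{Main obstacle.} The formal existence and uniqueness are routine manipulations with power series; the genuine difficulty is the quantitative convergence estimate in the last paragraph. Over a complete valued field this is classical, but over a general non-archimedean Banach $K$-algebra $\hat{R}$ one has to verify that the usual majorant argument goes through with coefficients in $\hat{R}$ rather than in a field. The cleanest way is to bypass componentwise analysis altogether and dominate the entire vector-valued iteration by the single scalar analytic equation above, using only submultiplicativity of the norm and the norm bound on $J^{-1}$, which is available from the Banach algebra Neumann series.
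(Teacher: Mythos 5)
Note first that the paper does not prove this proposition at all: it is imported verbatim from \cite[Corollary A.2]{vezz-fw}, so there is no internal argument to compare yours against. Your route — translate to $\bar\sigma=0,\bar\tau=0$, split $P=J\tau+R$ with $R$ having no constant or linear-in-$\tau$-only terms, build the formal solution by Newton iteration, get uniqueness from invertibility of the matrix with constant term $J$ over $\hat{R}[[\sigma]]$, then prove convergence by estimates — is the standard proof and is in the same spirit as the cited appendix. The reduction, the $(\sigma)$-adic induction $F^{(n+1)}-F^{(n)}\in(\sigma)^{n+1}$, and the uniqueness argument are all correct.

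The convergence step, however, is stated too loosely at exactly the point you flag as the substantive one. First, a quadratic majorant $y=A\sigma y+By^2+\sigma$ does not termwise dominate the recursion when $P$ has degree $\geq 3$ in $(\sigma,\tau)$: a monomial $\tau^\beta$ with $|\beta|\geq 3$ feeds products of three or more lower-order coefficients into the coefficient of order $n$, and these are not controlled by the coefficients of a quadratic scalar equation. The correct majorant is the full scalar equation $y=\sum_{\alpha,\beta}\big\|(J^{-1}R)_{\alpha\beta}\big\|\,s^{|\alpha|}y^{|\beta|}$ (a real polynomial with nonnegative coefficients, no constant term and no term linear in $y$ alone), whose solution is analytic at $0$ by the classical scalar implicit function theorem and whose coefficients dominate $\|F_\alpha\|$. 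Second, the domination should be run against the recursion for the coefficients of $F$ itself, i.e.\ against $F=-J^{-1}R(\sigma,F)$, rather than against the Newton increments $G^{(n)}$, whose coefficients stabilize but are not obviously comparable term by term to a majorant iteration. Alternatively, and more in the ultrametric spirit, one can rescale $\sigma\mapsto\epsilon\sigma$ so that all nonlinear coefficients of $J^{-1}R$ become small in norm and then run a contraction argument in the Banach algebra $\hat{R}\langle\epsilon^{-1}\sigma\rangle$; the strong triangle inequality makes the estimate immediate, and uniqueness of the formal solution identifies the limit with $F$, giving the positive radius directly. Finally, a small slip: the norm bound on $J^{-1}$ is not "available from the Banach algebra Neumann series" — $J^{-1}$ exists because $\det(\partial P_i/\partial\tau_j)(\bar\sigma,\bar\tau)\in\hat{R}^\times$ is a hypothesis, and its norm is finite simply because it is an element of the Banach algebra $M_m(\hat{R})$.
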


	\begin{cor}\label{artins}
		Suppose that $S$ is affinoid. Let $X $ and $Y $ be two  affinoid dagger algebras smooth over $S $ such that $\hat{Y}$ is \'etale over a poly-disc $\B^m\times \hat{S}$. 
		For any [iso-]morphism $\phi\colon \hat{X}\ra \hat{Y}$ over $\hat{S}$ and any $\varepsilon>0$ there exists a [iso-]morphism $\psi \colon X \ra Y $ over $S $ such that $||\phi-\hat{\psi}||\leq\epsilon$. 
	\end{cor}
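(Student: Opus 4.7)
The plan is to use the étale chart $\hat{Y}\to \B^m\times \hat{S}$ to reduce $\phi$ to a finite amount of coordinate data, approximate that data inside $R = \mcO^\dagger(X)$, and then appeal to the analytic implicit function theorem (Proposition \ref{implicitC}) and the algebraic lifting result (Proposition \ref{roots3}) to turn the approximation into an honest dagger morphism $\psi\colon X\to Y$ over $S$.

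Concretely, I would first fix a presentation compatible with the étale chart, of the form
$$\mcO(\hat{Y}) \cong \mcO(\B^m\times \hat{S})\langle \tau_1,\ldots,\tau_m\rangle/(P_1,\ldots,P_m),$$
with $\det(\partial P_k/\partial \tau_j)$ invertible in $\mcO(\hat{Y})$ (refining the étale chart to a finite rational cover of $Y$ if needed, then gluing the approximations at the end). The morphism $\phi^*$ is then completely encoded by the images $\bar{\sigma}_i := \phi^*(\sigma_i)\in \hat{R}^\circ$ and $\bar{\tau}_j := \phi^*(\tau_j)\in \hat{R}$, which satisfy $P_k(\bar{\sigma},\bar{\tau})=0$.

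Next, using that $R$ is dense in $\hat{R}$, choose $\tilde{\sigma}_i\in R\cap \hat{R}^\circ$ as close to $\bar{\sigma}_i$ as we wish. Proposition \ref{implicitC} produces a unique collection of formal power series $F_j(\sigma-\bar{\sigma})\in \hat{R}[[\sigma-\bar{\sigma}]]$ with $F(\bar{\sigma})=\bar{\tau}$ and $P(\sigma,F(\sigma))=0$, convergent on some disc of positive radius. Taking the differences $\tilde{\sigma}_i-\bar{\sigma}_i$ small enough to lie in this radius, the elements $\tilde{\tau}_j := F_j(\tilde{\sigma}-\bar{\sigma})\in \hat{R}$ satisfy $P_k(\tilde{\sigma},\tilde{\tau})=0$ and are within $\varepsilon$ of $\bar{\tau}_j$ (for suitable initial approximation quality). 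To upgrade $\tilde{\tau}_j$ from $\hat{R}$ to $R$, observe that the finitely generated $R$-algebra $R[\tau]/(P_1(\tilde{\sigma},\tau),\ldots,P_m(\tilde{\sigma},\tau))$ becomes finite étale after inverting nonzero elements of $R$ by the Jacobian hypothesis, so $\tilde{\tau}_j$ is algebraic over $\Frac R$; Proposition \ref{roots3} then gives $\tilde{\tau}_j\in R$. The assignment $\sigma_i\mapsto \tilde{\sigma}_i$, $\tau_j\mapsto \tilde{\tau}_j$ defines a dagger morphism $\psi^*\colon \mcO^\dagger(Y)\to R$ over $\mcO^\dagger(S)$ with $\|\phi-\hat{\psi}\|\leq\varepsilon$ by construction.

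For the isomorphism statement, once $\|\phi-\hat{\psi}\|$ is smaller than a constant depending on $\phi$, a standard non-archimedean perturbation (geometric series in the Banach algebra $\mcO(\hat{Y})$) shows that $\hat{\psi}$ is automatically an isomorphism; applying the same construction to $\phi^{-1}\colon \hat{Y}\to \hat{X}$ (which is legitimate because $\hat{X}$ is then étale over $\B^m\times \hat{S}$ via $\phi$) yields a dagger inverse of $\psi$ after tightening $\varepsilon$ once more. The main obstacle I expect is coordinating the constants: the radius of convergence of the $F_j$, and hence the quality of the approximation $\tilde{\sigma}_i\approx\bar{\sigma}_i$ one must demand, depends on $\bar{\sigma}$ and on the norms of the coefficients of the $P_k$, so one has to fix the étale presentation first and then compute the required approximation thresholds in terms of $\varepsilon$. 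A secondary, more bookkeeping difficulty is that the presentation above may only hold Zariski-locally on $Y$, so the whole argument may need to be localized over a finite rational cover and glued, which is possible because $X$ and $Y$ are affinoid and quasi-compact.
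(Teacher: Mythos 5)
Your main construction is exactly the paper's: use the Jacobian presentation of $\hat{Y}$ over $\hat{S}$, record $\phi$ by the tuples $(\bar\sigma,\bar\tau)$, approximate $\bar\sigma$ inside $R\cap\hat{R}^\circ$ by density, solve for the $\tau$-coordinates with Proposition \ref{implicitC}, and push them into $R$ via Proposition \ref{roots3}. Two remarks, one of which is a genuine gap. First, the localization/gluing fallback you flag as a ``secondary difficulty'' is not needed and would in fact be troublesome (approximations built separately on the pieces of a rational cover have no reason to agree on overlaps, and one cannot glue maps that only approximately coincide): the hypothesis is that $\hat{Y}$ is globally \'etale over $\B^m\times\hat{S}$, and by \cite[Lemma 1.1.51]{ayoub-rig} this yields a \emph{global} presentation $\mcO(\hat{Y})\cong\hat{A}\langle\underline{\sigma},\underline{\tau}\rangle/(P_1,\ldots,P_n)$ with invertible Jacobian, which is what the paper uses.

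The genuine gap is the target dagger structure. Your assignment $\sigma_i\mapsto\tilde\sigma_i$, $\tau_j\mapsto\tilde\tau_j$ only shows that the \emph{standard} dagger algebra $A\langle\underline{\sigma},\underline{\tau}\rangle^\dagger/(\underline{P})$ maps into $R$; but $Y$ comes with a given, a priori arbitrary, dagger structure $\mcO^\dagger(Y)\subset\mcO(\hat{Y})$, and nothing in your argument guarantees that this subalgebra (which need not contain, or be generated over $A$ by, the chosen coordinates) is carried into $R$. The paper's proof is split accordingly: it first proves the statement when $Y$ is the standard structure $\Spa^\dagger A\langle\underline{\sigma},\underline{\tau}\rangle^\dagger/(\underline{P})$, and then, applying that special case with $\hat{X}=\hat{Y}$ and $\phi=\id$, produces an endomorphism of $\hat{Y}$ close to the identity carrying one dagger structure to the other; for $\epsilon$ small this is an automorphism, so any two dagger structures on $\hat{Y}$ are isomorphic, which reduces the general case to the special one. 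Your treatment of the isomorphism clause (perturbation of an isomorphism, plus applying the construction to $\phi^{-1}$) is in the same spirit and roughly at the paper's level of detail, but you should add the comparison-of-dagger-structures step above; without it the stated corollary, whose $Y$ is an arbitrary dagger space with $\hat{Y}$ \'etale over a poly-disc, is not proved.
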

	
	\begin{proof}
		We suppose $S=\Spa^\dagger A$, $X=\Spa^\dagger R$ and we denote their limits by  $\hat{S}=\Spa \hat{A}$, $\hat{X}=\Spa \hat{R}$. 
		Note that $\hat{Y}$ is  isomorphic to $\Spa \hat{A}\langle\sigma_1,\ldots,\sigma_m,\tau_1,\ldots,\tau_n\rangle/(P_1,\ldots, P_n)$ such that each $P_i$ is a polynomial in $A[\sigma,\tau]$ and  $\det\left(\frac{\del P}{\del \tau}\right)$  is invertible in $\mcO(\hat{Y})$ by means of \cite[Lemma 1.1.51]{ayoub-rig}.
		
		We first assume that  $Y=\Spa^\dagger  A\langle \underline{\sigma},\underline{\tau}\rangle^\dagger\!/(\underline{P}(\underline{\sigma},\underline{\tau}))$. 
		We also remark that  $\det\left(\frac{\del P}{\del \tau}\right)$  is invertible in $\mcO^\dagger (Y)$ by Corollary \ref{invdag}.
		
		The map $\phi$ is uniquely determined by the association $({\sigma},{\tau})\mapsto(s,t)$ from $\hat{A}\langle \underline{\sigma},\underline{\tau}\rangle/(\underline{P})$ to $\hat{R}$ for an $m$-tuple $s$ and an $n$-tuple $t$ in $\hat{R}$. 	
		By  Corollary \ref{implicitC}   there exists  power series $F=(F_{1},\ldots,F_{m})$ in $\hat{R}[[\sigma-\bar{\sigma}]] $  such that
		\[
		(\sigma,\tau)\mapsto(\tilde{s},F(\tilde{s}))\in \hat{R}
		\]
		defines a new map $\psi$ from $\hat{X}$ to $\hat{Y}$  for any choice of $\tilde{s}\in \hat{R}^\circ\cap R $ such that $\tilde{s}$ is in the convergence radius of $F$ and $F(\tilde{s})$ is in $\hat{R}^\circ$. The field $\Frac(R )(F(\tilde{s}))$ is finite \'etale over $\Frac(R )$. By Proposition \ref{roots3} we deduce that $F(\tilde{s}) $ is in $R $ and therefore $\psi$ is a map from $X $ to $Y $. 
		By the density of $R $ in $\hat{R}$ and the continuity of $F$ we can also assume that the $m$-tuple $\tilde{s}$ induces a map $\psi$ such that $||\phi-\psi||\leq\epsilon$.  
		
		If we take $\hat{X}=\hat{Y}$  we can find an endomorphism $\psi$ of $\hat{X}$ inducing a map $X \ra Y $ such that $||\id-\psi||\leq\epsilon$.  If we take $\epsilon$ sufficiently small, we deduce that $\psi$ is an automorphism of $\hat{X}$ and hence any two dagger structures on it are isomorphic.	In particular, we obtain the general case  of the proposition. 
	\end{proof}
	
	In the previous proof, we also showed the following structure theorem.
	
	\begin{cor}\label{etaleoverball}Suppose $S$ affinoid. 
		Let $\hat{Y}$ be an affinoid rigid space which is \'etale over the poly-disc $\B^{m}\times \hat{S}$. Then it admits a dagger structure $Y $  isomorphic to $\Spa R $ with $$R = \mcO^\dagger (\hat{S})\langle\sigma_1,\ldots,\sigma_m,\tau_1,\ldots,\tau_n\rangle^\dagger\!/(P_1,\ldots,P_n)$$ where each $P_i $ lies in $\mcO^\dagger(\hat{S})[\underline{\sigma},\underline{\tau}]$ and $\det(\frac{\del P_i}{\del\tau_j})$ is invertible in $R $.
	\end{cor}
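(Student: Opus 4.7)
The plan is to read off the construction already carried out inside the proof of Corollary \ref{artins}. Writing $S = \Spa^\dagger A$ with completion $\hat S = \Spa \hat A$, the hypothesis that $\hat Y$ is \'etale over the relative poly-disc $\B^m \times \hat S$ combined with \cite[Lemma 1.1.51]{ayoub-rig} produces an isomorphism
$$\hat Y \cong \Spa \hat A\langle\sigma_1,\ldots,\sigma_m,\tau_1,\ldots,\tau_n\rangle/(P_1,\ldots,P_n),$$
in which the $P_i$ are polynomials in $A[\sigma,\tau]$ (with coefficients in the dagger algebra, not merely in its Tate completion) and the Jacobian $\det(\del P_i/\del \tau_j)$ is invertible in $\mcO(\hat Y)$. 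This is exactly the first step already used in the proof of Corollary \ref{artins}.

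I would then set
$$R := A\langle\sigma_1,\ldots,\sigma_m,\tau_1,\ldots,\tau_n\rangle^\dagger / (P_1,\ldots,P_n),$$
which is a dagger algebra, being a quotient of $A\otimes^\dagger K\langle\sigma,\tau\rangle^\dagger$ (the tensor product existing by \cite[Paragraph 1.16]{gk-over} as recalled above). Its completion is canonically identified with $\hat A\langle\sigma,\tau\rangle/(P_1,\ldots,P_n) \cong \mcO(\hat Y)$; this amounts to the fact that, with the conventions of Definition \ref{defdag}, passing to the Tate completion commutes with taking a quotient by a finitely generated ideal whose generators already lie in the dagger algebra. Hence $Y := \Spa^\dagger R$ is a dagger structure on $\hat Y$ of exactly the shape required.

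To finish, I would invoke Corollary \ref{invdag} on the element $\det(\del P_i/\del \tau_j) \in R$: it is invertible in the completion $\hat R \cong \mcO(\hat Y)$ by the choice of presentation, hence it is already invertible in $R$. Together with the explicit form of $R$, this delivers the statement. The only point where some care is genuinely needed is the identification $\hat R \cong \hat A\langle\sigma,\tau\rangle/(P)$, but this is conceptually clear and is implicitly used at the corresponding point of the proof of Corollary \ref{artins}, so I do not see a real obstacle: the corollary is essentially an unpacking of data already extracted inside that earlier proof.
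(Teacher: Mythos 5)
Your proposal is correct and is essentially the paper's own argument: the paper proves this corollary simply by noting it was established in the course of the proof of Corollary \ref{artins}, namely by using \cite[Lemma 1.1.51]{ayoub-rig} to get the presentation $\hat Y \cong \Spa\,\hat A\langle\underline\sigma,\underline\tau\rangle/(\underline P)$ with $P_i\in A[\underline\sigma,\underline\tau]$, taking $Y=\Spa^\dagger A\langle\underline\sigma,\underline\tau\rangle^\dagger/(\underline P)$ as the dagger structure, and invoking Corollary \ref{invdag} for the invertibility of $\det(\del P_i/\del\tau_j)$ in $R$. Your additional remark about the identification $\hat R\cong\hat A\langle\underline\sigma,\underline\tau\rangle/(\underline P)$ is a harmless elaboration of a point the paper uses implicitly.
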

	
	\begin{prop}\label{smooth}
		Any  rigid dagger space smooth over $S $ is locally \'etale over a dagger poly-disc $\B^{n\dagger}\times{V }$ for some affinoid rational open subset $V\subset S$.
	\end{prop}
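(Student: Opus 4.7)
The plan is to reduce to the affinoid case, invoke the analogous structure theorem for smooth maps of rigid analytic spaces, and then promote the resulting étale maps to the dagger setting using the two structural tools already established in this section: Proposition \ref{dagcov1} (which lifts rational opens from $\hat{S}$ to $S$) and Corollary \ref{etaleoverball} (which provides the explicit dagger structure for rigid spaces étale over a poly-disc).

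First I would work locally on $X$ and assume $X = \Spa^\dagger R$ is affinoid. The induced map $\hat{X}\to\hat{S}$ is a smooth morphism of affinoid rigid spaces. By the classical structure theorem for smooth maps in rigid analytic geometry (e.g.\ \cite[Lemma 1.1.52]{ayoub-rig} or the proof technique already used in Corollary \ref{artins}), there exists an affinoid covering $\{\hat{X}'_i\to\hat{X}\}$ by rational open subsets, and for each index $i$ a rational open subset $\hat{V}_i\subset\hat{S}$ together with an étale $\hat{S}$-morphism $\hat{X}'_i\to\B^n\times\hat{V}_i$. By Proposition \ref{dagcov1}(1), applied both to $\hat{X}'_i\subset\hat{X}$ and to $\hat{V}_i\subset\hat{S}$, these rational opens lift canonically to rational open dagger subspaces $X'_i\subset X$ and $V_i\subset S$ whose limits recover $\hat{X}'_i$ and $\hat{V}_i$.

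Next, for each $i$, apply Corollary \ref{etaleoverball} to the rigid affinoid $\hat{X}'_i$, which is étale over $\B^n\times\hat{V}_i$. This yields a dagger structure $Y_i$ on $\hat{X}'_i$ that is realized as an étale dagger space over $\B^{n\dagger}\times V_i$ with the explicit form $\mcO^\dagger(V_i)\langle\underline{\sigma},\underline{\tau}\rangle^\dagger/(\underline{P})$ of the corollary. A priori, this dagger structure $Y_i$ need not agree on the nose with the dagger structure $X'_i$ induced from $X$, so the main point is to reconcile them.

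The reconciliation is precisely what Corollary \ref{artins} provides: both $X'_i$ and $Y_i$ are smooth affinoid dagger spaces over $V_i$ with the same limit $\hat{X}'_i$, and $Y_i$ is étale over a relative poly-disc $\B^{n\dagger}\times V_i$. Applying the corollary to the identity $\id\colon\hat{X}'_i\to\hat{X}'_i$ with $\varepsilon$ sufficiently small produces a dagger isomorphism $X'_i\cong Y_i$ over $V_i$ (this is exactly the argument at the end of the proof of Corollary \ref{artins} showing that any two dagger structures on such a rigid space are isomorphic). Composing with the étale map $Y_i\to\B^{n\dagger}\times V_i$ gives an étale $S$-morphism $X'_i\to\B^{n\dagger}\times V_i$, which is what we wanted. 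The only delicate step is the appeal to Corollary \ref{artins}, since it requires the source to already be smooth over the base $S$; smoothness of $X\to S$ (hence of each $X'_i\to S$) is exactly the hypothesis of the proposition, so no further input is needed.
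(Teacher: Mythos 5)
Your proof is correct and follows essentially the same route as the paper: after localizing, it invokes the rigid-analytic structure theorem of \cite{ayoub-rig} together with Proposition \ref{dagcov1} to produce a rational cover whose limits are \'etale over poly-discs over rational opens of $\hat{S}$, and then concludes via Corollary \ref{etaleoverball}. Your explicit reconciliation of the induced dagger structure with the one furnished by Corollary \ref{etaleoverball}, via the uniqueness argument at the end of the proof of Corollary \ref{artins}, is precisely what the paper leaves implicit; the only small point is that, as in the paper, one should also localize on $S$ first so that the $\hat{V}_i$ are rational opens of an affinoid base to which Proposition \ref{dagcov1} and Corollary \ref{artins} apply.
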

	
	\begin{proof}The claim is local on $S $ so we can assume it is affinoid. 
		Let $X $ be a smooth dagger variety over it. By Proposition \ref{dagcov1} and \cite{ayoub-rig} we can find a rational covering $\{U _i\ra X \}$ such that each limit $\hat{U}_i$ is \'etale over some poly-disc $\B^n_{\hat{V}_i}$ where $\hat{V}_i $ is rational inside $\hat{S}$. The claim then follows from Corollary \ref{etaleoverball}.
	\end{proof}

	We recall (see \cite[Definition 1.1.9/1]{BGR}) that a morphism of normed groups $\phi\colon G\ra H$ is \emph{strict} if the homomorphism $G/\ker\phi\ra\phi(G)$ is a homeomorphism, where the former group is endowed with the quotient topology and the latter with the topology inherited from $H$. In particular, we say that a sequence of normed $K$-vector spaces
	\[
	R\stackrel{f}{\ra} R'\stackrel{g}{\ra} R''
	\]
	is \emph{strict and exact} at $R'$ if it exact at $R'$ and if $f$ is strict that is,  the quotient norm and the norm induced by $R'$ on $R/\ker(f)\cong\ker(g)$ are equivalent.
	
	\begin{lemma}\label{chinese22}
		For any  map $\sigma\colon T_\sigma\ra\{0,1\}$ defined on a subset $T_\sigma$ of $\{1,\ldots,n\}$ we denote by $I_\sigma$ the ideal 
		generated by $\theta_i-\sigma(i)$ as $i$ varies in $T_\sigma$. 
		For any finite  set $\Sigma$ of such maps and any dagger algebra $R$ with limit $\hat{R}$ the following diagram  of topological $K$-algebras has vertical inclusions and strict and exact lines 
		$$\xymatrix{
			0\ar[r] &\hat{R} \langle  \underline{\theta}\rangle/\displaystyle\bigcap_{\sigma\in\Sigma} I_\sigma\ar[r]&\displaystyle\prod_{\sigma\in\Sigma} \hat{R} \langle  \underline{\theta}\rangle/I_{\sigma}\ar[r]&\displaystyle\prod_{\sigma,\sigma'\in\Sigma} \hat{R}\langle  \underline{\theta}\rangle/(I_\sigma+I_{\sigma'})\\
			0\ar[r] &R\langle\underline{\theta}\rangle^\dagger\!/\displaystyle\bigcap_{\sigma\in\Sigma} I_\sigma\ar[r]\ar@{^{(}->}[u]&\displaystyle\prod_{\sigma\in\Sigma} R\langle\underline{\theta}\rangle^\dagger\!/I_{\sigma}\ar[r]\ar@{^{(}->}[u]&\displaystyle\prod_{\sigma,\sigma'\in\Sigma} R\langle\underline{\theta}\rangle^\dagger\!/(I_\sigma+I_{\sigma'})\ar@{^{(}->}[u]
		}$$
		Moreover, the ideal $\bigcap_{\sigma\in\Sigma} I_\sigma$ is generated by a finite set of polynomials with coefficients in $\Z$.
	\end{lemma}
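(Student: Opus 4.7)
My approach is to establish both assertions first in the polynomial ring $\Z[\underline{\theta}]$ and then transfer them to the Tate and dagger algebras by base change, handling the topology at the end. The starting observation is that in each of the three ambient rings $\Z[\underline{\theta}]$, $R\langle\underline{\theta}\rangle^\dagger$, and $\hat R\langle\underline{\theta}\rangle$, the ideal $I_\sigma$ is generated by the same linear elements $\theta_i-\sigma(i)$, and the quotient is identified via the evaluation $\theta_i\mapsto\sigma(i)$ (for $i\in T_\sigma$) with the analogous algebra in the remaining variables; in particular $I_\sigma+I_{\sigma'}$ is the unit ideal when $\sigma$ and $\sigma'$ disagree on $T_\sigma\cap T_{\sigma'}$ (since it then contains $\pm 1$), and is otherwise again a coordinate-type ideal for the merged assignment. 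This reduces the lemma to a combinatorial statement about the polynomial ring, together with a transfer argument.

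In $\Z[\underline\theta]$, let $I_\sigma^{(0)}$ denote the ideal $(\theta_i-\sigma(i):i\in T_\sigma)$ viewed in $\Z[\underline\theta]$; the intersection $\bigcap_\sigma I_\sigma^{(0)}$ is finitely generated by Noetherianity, producing the integer-coefficient polynomials $g_1,\ldots,g_m$ required by the last assertion. For the exactness of the Chinese-remainder-style sequence I would induct on $|\Sigma|$: given pairwise compatible $(f_\sigma)_\sigma$, pick $\sigma_0\in\Sigma$ and use the inductive hypothesis on $\Sigma\setminus\{\sigma_0\}$ to find a single lift $f'$; pairwise compatibility then forces $f_{\sigma_0}-f'\in\bigcap_{\sigma\neq\sigma_0}(I_{\sigma_0}+I_\sigma)$, so the induction closes up provided the \emph{distributivity identity} $\bigcap_{\sigma\neq\sigma_0}(I_{\sigma_0}+I_\sigma)=I_{\sigma_0}+\bigcap_{\sigma\neq\sigma_0}I_\sigma$ holds for these coordinate ideals. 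I would prove this identity geometrically: both sides have the same radical (the ideal of $\bigcup_\sigma V(I_{\sigma_0}+I_\sigma)$, a union of coordinate subspaces), so it reduces to showing that $I_{\sigma_0}+\bigcap_\sigma I_\sigma$ is itself radical, which follows from a recursive description of the ideal of a union of coordinate subspaces by induction on the total number of variables.

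Once exactness and the $\Z$-generator statement are established in $\Z[\underline\theta]$, the transfer to $\hat R\langle\underline\theta\rangle$ and $R\langle\underline\theta\rangle^\dagger$ goes through because each $\Z[\underline\theta]/I_\sigma^{(0)}$ is a polynomial ring over $\Z$, and the evaluation presentation of $A/I_\sigma$ (for $A$ any of the three rings) makes the sequence compatible with base change; in particular the $g_j$ continue to generate $\bigcap_\sigma I_\sigma$ inside $A$. For strictness, in the Tate case all terms are Banach $K$-spaces and the image of $A/\bigcap_\sigma I_\sigma$ is closed (cut out by the continuous difference maps to $\prod A/(I_\sigma+I_{\sigma'})$), so Banach's open mapping theorem makes the embedding strict; in the dagger case I would pass to the filtered colimit using $R=\varinjlim_h\hat R_h$ and $R\langle\underline\theta\rangle^\dagger=\varinjlim_h\hat R_h\langle\pi^{1/h}\underline\theta\rangle$, deducing both exactness and strictness at the colimit from the corresponding properties on each Tate level. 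The main obstacle I foresee is the distributivity identity: it fails for general ideals and hinges on the specific coordinate nature of the $I_\sigma$, with the technical crux being the radicality of $I_{\sigma_0}+\bigcap_\sigma I_\sigma$, which requires a careful geometric/combinatorial argument on unions of coordinate subspaces.
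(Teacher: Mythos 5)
Your overall architecture differs from the paper's: the paper does not reprove the completed line at all, but cites \cite{vezz-fw} for its strict exactness and for the $\Z$-coefficient generators of $\bigcap_\sigma I_\sigma$, obtains exactness of the dagger line by applying that statement to the rings $\hat{R}_h\langle\pi^{1/h}\underline{\theta}\rangle$ and passing to the direct limit, checks the vertical inclusions on the last two columns (where the quotients are identified with algebras in the remaining variables), and deduces strictness of the second line from its isometric containment in the first. Your plan of reproving the combinatorial core in $\Z[\underline{\theta}]$ (CRT-style induction on $|\Sigma|$ reduced to the distributivity identity for these face ideals) is the right heart of the matter and is essentially how one would prove the cited ingredient from scratch; the identity is true for these ideals and your reduction of it to radicality of $I_{\sigma_0}+\bigcap_{\sigma}I_\sigma$ is coherent (the other side is automatically radical, being an intersection of radical ideals), though you should be aware that your argument is phrased over a field while the statement you need is over $\Z$, so the Nullstellensatz step needs an extra descent or an explicit $\Z$-basis argument.

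The genuine gap is the transfer step. Exactness at the middle spot and the equality of $\bigcap_\sigma I_\sigma$ computed in $A$ with the extension of $\bigcap_\sigma I_\sigma$ computed in $\Z[\underline{\theta}]$ (which is what ``the $g_j$ continue to generate'' means) are precisely the kind of statements that do not follow from right-exactness of base change or from the evaluation presentations of $A/I_\sigma$: formation of ideal intersections and exactness in the middle of a left-bounded sequence are not preserved by an arbitrary ring map. What makes the transfer work is flatness of $\hat{R}\langle\underline{\theta}\rangle$ over $\Z[\underline{\theta}]$ (since $\hat{R}$ is a $K$-algebra, hence $\Z$-flat, and $\hat{R}\langle\underline{\theta}\rangle$ is flat over $\hat{R}[\underline{\theta}]$), together with the fact that $R\langle\underline{\theta}\rangle^\dagger=\varinjlim_h\hat{R}_h\langle\pi^{1/h}\underline{\theta}\rangle$ is a filtered colimit of such algebras; alternatively one must run the distributivity induction directly inside $A$. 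As written, ``compatible with base change'' is asserted, not proved, and this is the decisive point of the lemma. A second, smaller issue is strictness for the dagger line: in this paper $R\langle\underline{\theta}\rangle^\dagger$ carries the topology induced from $\hat{R}\langle\underline{\theta}\rangle$ (Definition 2.1), so ``strictness passes to the filtered colimit'' does not address the topology actually in play, and the open-mapping constants at the levels $h$ are not a priori uniform; the efficient argument is the paper's, namely that the bottom line embeds isometrically into the top line, whose strictness you have already obtained from the open mapping theorem. Your Banach-space argument for the first line is correct (ideals in affinoid algebras are closed, so all terms are Banach and the image is closed by exactness).
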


	\begin{proof}
		The fact that the first line is strict and exact as well as the description of the generators of $\bigcap I_\sigma$ is proved in \cite{vezz-fw}. The same statement applied to the rings $\hat{R}_h\langle\pi^{1/h}\underline{\theta}\rangle$ and a direct limit argument show that also the second line is exact. We now prove that the vertical maps are inclusions. This can be proved only for the last two columns, where the statement is clear. As the second line is isometrically contained in the first, we also deduce that it is strict as well.
	\end{proof}

	Let $\sigma$ and $\sigma'$ be maps defined from two subsets $T_\sigma$ resp. $T_{\sigma'}$ of $\{1,\ldots,n\}$ to $\{0,1\}$. We say that they are \emph{compatible} if $\sigma(i)=\sigma'(i)$ for all $i\in T_\sigma\cap T_{\sigma'}$ and in this case we denote by $(\sigma,\sigma')$ the map from $T_\sigma\cup T_{\sigma'}$ extending them.
	
	\begin{lemma}\label{chinesecor2b}
		Let $R $ be a dagger algebra with completion $\hat{R}$ and $\Sigma$ a set as in Lemma \ref{chinese22}. 
		For any $\sigma\in\Sigma$ let  $\bar{f}_\sigma$ be an element of $\hat{R}\langle\underline{\theta}\rangle/I_\sigma$ such that $\bar{f}_\sigma|_{(\sigma,\sigma')}=\bar{f}_{\sigma'}|_{(\sigma,\sigma')}$ for any couple $\sigma,\sigma'\in\Sigma$ of compatible maps. 
		\begin{enumerate}
			\item There exists an element $f\in \hat{R}\langle\underline{\theta}\rangle$ such that $f|_\sigma=\bar{f}_\sigma$. 
			\item There exists a constant $C=C(\Sigma)$ such that if for some $g\in \hat{R}\langle\underline{\theta}\rangle$ one has $|\bar{f}_\sigma-{g}|_\sigma|<\varepsilon$ 
			for all $\sigma$ then the element $f$ can be chosen so that $|f-g|<C\varepsilon$. Moreover, if $\bar{f}_\sigma\in R\langle\underline{\theta}\rangle^\dagger\!/I_\sigma$ for all $\sigma$ then the element $f$ can be chosen inside $R\langle\underline{\theta}\rangle^\dagger$. 
		\end{enumerate}
	\end{lemma}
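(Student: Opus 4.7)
The plan is to deduce both assertions directly from the strict exactness of the Chinese-remainder-type diagram of Lemma \ref{chinese22}. The first step is to reinterpret the compatibility hypothesis: for an incompatible pair $\sigma,\sigma'$ the quotient $\hat{R}\langle\underline{\theta}\rangle/(I_\sigma+I_{\sigma'})$ is zero, since $I_\sigma+I_{\sigma'}$ contains some $\sigma(i)-\sigma'(i)=\pm 1$, whereas for a compatible pair the equality $\bar{f}_\sigma|_{(\sigma,\sigma')}=\bar{f}_{\sigma'}|_{(\sigma,\sigma')}$ says precisely that $\bar{f}_\sigma$ and $\bar{f}_{\sigma'}$ agree modulo $I_\sigma+I_{\sigma'}$. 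Taken together, these two observations mean that the tuple $(\bar{f}_\sigma)\in\prod_\sigma\hat{R}\langle\underline{\theta}\rangle/I_\sigma$ lies in the kernel of the \v{C}ech-style difference map appearing in Lemma \ref{chinese22}.

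For (1), exactness of the top row then produces a unique preimage $\bar{f}\in\hat{R}\langle\underline{\theta}\rangle/\bigcap_\sigma I_\sigma$, and lifting $\bar{f}$ through the surjection $\hat{R}\langle\underline{\theta}\rangle\twoheadrightarrow\hat{R}\langle\underline{\theta}\rangle/\bigcap_\sigma I_\sigma$ gives the required element $f$. The ``moreover'' clause of (2) proceeds by exactly the same argument, but run in the bottom (dagger) row of the diagram: if every $\bar{f}_\sigma$ lies in $R\langle\underline{\theta}\rangle^\dagger\!/I_\sigma$, then the same preimage already lives in $R\langle\underline{\theta}\rangle^\dagger\!/\bigcap_\sigma I_\sigma$ and thus lifts to $R\langle\underline{\theta}\rangle^\dagger$.

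The quantitative claim in (2) will follow by applying the same reasoning to the shifted tuple $(\bar{f}_\sigma-g|_\sigma)_\sigma$, which is again in the kernel of the \v{C}ech map because the diagonal tuple $(g|_\sigma)$ trivially satisfies the compatibility condition. Here I invoke the strictness of the top row---and this is the only place strictness enters the picture---to produce a constant $C_1=C_1(\Sigma)$, depending only on $\Sigma$, such that the preimage $\bar{h}\in\hat{R}\langle\underline{\theta}\rangle/\bigcap_\sigma I_\sigma$ has quotient norm at most $C_1\varepsilon$. Since the quotient norm is defined as an infimum, I can then lift $\bar{h}$ to some $h\in\hat{R}\langle\underline{\theta}\rangle$ with $|h|<2C_1\varepsilon$, and setting $f:=g+h$, $C:=2C_1$ finishes the argument. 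The main point to check is that the constant $C$ emerging from strictness depends only on $\Sigma$ and not on the individual $\bar{f}_\sigma$ or $g$; this is precisely the content of strictness in Lemma \ref{chinese22}, and beyond it the argument is entirely formal, so I do not anticipate any serious obstacle.
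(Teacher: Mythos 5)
Your treatment of part (1) and of the first estimate in part (2) matches the paper's: both are read off from the exactness and strictness of the top row of Lemma \ref{chinese22}, with $C(\Sigma)$ the constant comparing the quotient norm on $\hat{R}\langle\underline{\theta}\rangle/\bigcap_\sigma I_\sigma$ with the sup norm coming from $\prod_\sigma \hat{R}\langle\underline{\theta}\rangle/I_\sigma$, and your observation that incompatible pairs contribute zero quotients is a correct way to see that the compatible tuple lies in the kernel of the difference map. The gap is in the ``moreover'' clause. You treat it as a separate, qualitative statement and produce a dagger lift by running the argument in the bottom row, but that lift comes with no control on its distance to $g$. The lemma is needed (and is used in the induction of Proposition \ref{2dH=02}, where $g=s_N$) in the conjunctive form: a single $f\in R\langle\underline{\theta}\rangle^\dagger$ with $|f-g|<C\varepsilon$, where $g$ is merely an element of $\hat{R}\langle\underline{\theta}\rangle$. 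Your quantitative argument cannot simply be transplanted to the bottom row, because the shifted tuple $(\bar{f}_\sigma-g|_\sigma)_\sigma$ is not a tuple of dagger elements when $g$ is not, so your two constructions yield two different elements, one close to $g$ and one overconvergent, and you never reconcile them.

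The paper closes exactly this gap with a correction step you are missing: it takes the dagger lift $f_1\in R\langle\underline{\theta}\rangle^\dagger$ and the norm-controlled lift $f_2\in\hat{R}\langle\underline{\theta}\rangle$ with $|f_2-g|<C\varepsilon$, notes that $f_1-f_2\in\bigcap_\sigma I_\sigma$, and uses the last assertion of Lemma \ref{chinese22} (the ideal $\bigcap_\sigma I_\sigma$ is generated by finitely many polynomials $p_1,\ldots,p_M$ with integer coefficients, hence dagger elements) to write $f_1-f_2=\sum_i\gamma_i p_i$ with $\gamma_i\in\hat{R}\langle\underline{\theta}\rangle$. Replacing each $\gamma_i$ by a dagger element $\tilde{\gamma}_i$ with $|\tilde{\gamma}_i-\gamma_i|<C\varepsilon/M|p_i|$ (density of $R\langle\underline{\theta}\rangle^\dagger$ in $\hat{R}\langle\underline{\theta}\rangle$), the element $f_3=f_1-\sum_i\tilde{\gamma}_i p_i$ is still a lift of $(\bar{f}_\sigma)$, lies in $R\langle\underline{\theta}\rangle^\dagger$, and satisfies $|f_3-g|\leq\max\{|f_2-g|,|f_2-f_3|\}<C\varepsilon$. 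Some such reconciliation (either this one, or an argument that first approximates $g$ by a dagger element and then exploits the strictness of the bottom row, which you would then have to justify) is essential; as written, your proof does not establish the statement in the strength in which it is later applied.
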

	
	\begin{proof}
		The first claim and the first part of the second are simply a restatement of Lemma \ref{chinese22}, where $C=C(\Sigma)$ is  the constant defining the compatibility $||\cdot||_1\leq C||\cdot||_2$ between the  norm $||\cdot||_1$ on $\hat{R}\langle\underline{\theta}\rangle/\bigcap I_\sigma$ induced by the quotient and the norm $||\cdot||_2$ induced by the embedding in $\prod \hat{R}\langle\underline{\theta}\rangle/I_\sigma$. We now turn to the last sentence of the second claim.

		By Lemma \ref{chinese22} and what proved above there exist two lifts of $\{\bar{f}_\sigma\}$: an element $f_1$ of $R\langle\underline{\theta}\rangle^\dagger$ and an element $f_2$ of $\hat{R}\langle\underline{\theta}\rangle$ such that $|f_2-g|<C\varepsilon$ and their difference lies in $\bigcap I_\sigma$. Hence, we can find elements $\gamma_i\in \hat{R}\langle \underline{\theta}\rangle$ such that $f_1=f_2+\sum_i\gamma_i p_i$ where $\{p_1,\ldots,p_M\}$ are generators of $\bigcap I_\sigma$ which have coefficients in $\Z$. Let now $\tilde{\gamma}_i$ be elements of $R\langle\underline{\theta}\rangle^\dagger$ with $|\tilde{\gamma}_i-\gamma_i|<C\varepsilon/M|p_i|$. The element $f_3\colonequals f_1-\sum_i\tilde{\gamma}_i p_i$ lying in $R\langle\underline{\theta}\rangle^\dagger$ is another lift of $\{\bar{f}_\sigma\}$ and satisfies  $|f_3-g|\leq\max \{|f_2-g|,|f_2-f_3|\}< C\varepsilon$ proving the claim.
	\end{proof}

	\begin{prop}\label{2dH=02}
		Let $R $ be a dagger algebra with completion $\hat{R}$. Let $s_1,\ldots,s_N$ be elements of $\hat{R}\langle\theta_1,\ldots,\theta_n\rangle^\circ$. 
		For any $\varepsilon>0$ there exist elements $\tilde{s}_1,\ldots,\tilde{s}_N$ of $\hat{R}\langle\underline{\theta}\rangle^\circ\cap R\langle\underline{\theta}\rangle^\dagger$
		satisfying the following conditions.
		\begin{enumerate}
			\item $|s_\alpha-\tilde{s}_\alpha|<\varepsilon$ for each $\alpha$.
			\item For any $\alpha,\beta\in\{1,\ldots,N\}$ and any $k\in\{1,\ldots,n\}$ such that $s_\alpha|_{\theta_k=0}=s_\beta|_{\theta_k=0}$ we also have $\tilde{s}_\alpha|_{\theta_k=0}=\tilde{s}_\beta|_{\theta_k=0}$.
			\item For any $\alpha,\beta\in\{1,\ldots,N\}$ and any $k\in\{1,\ldots,n\}$ such that $s_\alpha|_{\theta_k=1}=s_\beta|_{\theta_k=1}$ we also have $\tilde{s}_\alpha|_{\theta_k=1}=\tilde{s}_\beta|_{\theta_k=1}$.
			\item\label{cond112ex} For any $\alpha\in\{1,\ldots,N\}$ if $s_\alpha|_{\theta_1=1}\in R\langle\underline{\theta}\rangle^\dagger$  then $\tilde{s}_\alpha|_{\theta_1=1}={s}_\alpha|_{\theta_1=1}$.
		\end{enumerate}
	\end{prop}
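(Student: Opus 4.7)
The plan is to build the $\tilde s_\alpha$ by first constructing mutually compatible approximations on every face of the polydisc $\B^n$ and then gluing them via a final application of Lemma \ref{chinesecor2b}. The bookkeeping goes as follows: for each map $\sigma\colon T_\sigma\to\{0,1\}$ with $T_\sigma\subseteq\{1,\ldots,n\}$, declare $\alpha\sim_\sigma\beta$ iff $s_\alpha|_{I_\sigma}=s_\beta|_{I_\sigma}$, and on each equivalence class $C$ let $v_{\sigma,C}\in\hat R\langle\theta_j:j\notin T_\sigma\rangle^\circ$ denote the common restriction. These values are coherent in the sense that $v_{\sigma,C}|_{\sigma'}=v_{\sigma',[C]_{\sigma'}}$ whenever $\sigma'\supseteq\sigma$ as functions (so $\sigma'$ cuts out a sub-face of the face cut out by $\sigma$), where $[C]_{\sigma'}$ denotes the coarser class of any $\alpha\in C$.

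Next I would choose, by descending induction on codimension from $n$ (vertices) down to $1$, elements $\tilde v_{\sigma,C}\in R\langle\theta_j:j\notin T_\sigma\rangle^\dagger\cap\hat R\langle\ldots\rangle^\circ$ with $|\tilde v_{\sigma,C}-v_{\sigma,C}|$ small and satisfying the same coherence $\tilde v_{\sigma,C}|_{\sigma'}=\tilde v_{\sigma',[C]_{\sigma'}}$. The base case (vertices) uses density of $R$ in $\hat R$ together with Proposition \ref{daggerballrep} to stay in the unit ball. At the inductive step from codim $c$ to codim $c-1$, the already-chosen codim-$c$ values on sub-faces of $\sigma$ form pairwise compatible boundary data (on overlaps at codim $>c$ both sides equal the coherent $\tilde v_{(\sigma',\sigma''),\ldots}$ fixed earlier), so Lemma \ref{chinesecor2b}, applied with $g=v_{\sigma,C}$ in the appropriate Tate polydisc, yields a dagger-algebra lift $\tilde v_{\sigma,C}$ whose error is amplified by at most the constant $C_0$ of the lemma; after at most $n$ such steps the error stays bounded by $C_0^n$ times the base-case tolerance. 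To enforce condition~(4), pre-assign $\tilde v_{\sigma,[C]}:=v_{\sigma,[C]}$ \emph{exactly} whenever $\sigma$ extends the map $\{1\}\mapsto 1$ and the corresponding value on $\theta_1=1$ already belongs to $R\langle\theta_2,\ldots,\theta_n\rangle^\dagger$; since these pre-assignments equal the true restrictions, they are automatically compatible with everything else built around them and still lie in the appropriate dagger algebra.

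Finally, for each $\alpha$ apply Lemma \ref{chinesecor2b} once more in $\hat R\langle\underline\theta\rangle$ with $\Sigma$ the set of all codim-$1$ maps, with $g=s_\alpha$, and with boundary data on each face $\theta_k=\epsilon$ given by $\tilde v_{\sigma,[\alpha]_\sigma}$. These data are pairwise compatible on codim-$2$ overlaps by the coherence above, lie in the dagger algebra, and each differs from $s_\alpha|_{\theta_k=\epsilon}$ by at most the error accumulated in the induction; the lemma then produces $\tilde s_\alpha\in R\langle\underline\theta\rangle^\dagger$ with $|\tilde s_\alpha-s_\alpha|$ arbitrarily small when the base-case tolerance is chosen small enough in terms of $\varepsilon$ and the Chinese-remainder constants. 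Conditions (2), (3), (4) hold by design, and the unit-ball condition $\tilde s_\alpha\in\hat R\langle\underline\theta\rangle^\circ$ is automatic for tolerance $\leq 1$ via the ultrametric inequality $|\tilde s_\alpha|\leq\max(|s_\alpha|,|\tilde s_\alpha-s_\alpha|)$. The main obstacle is managing, in one coherent book of data, the equivalence classes $[\alpha]_\sigma$ and the coherence relations across all faces simultaneously with the exact-value pre-assignments enforcing~(4), so that the hypotheses of Lemma \ref{chinesecor2b} remain satisfied at every inductive step.
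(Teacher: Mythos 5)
Your argument is correct and rests on the same key ingredient as the paper, namely Lemma \ref{chinesecor2b}, but it organizes the induction differently. The paper strengthens the statement to conditions on \emph{all} faces (its auxiliary conditions (5) and (6)) and then inducts on the number $N$ of elements: $\tilde{s}_1,\ldots,\tilde{s}_{N-1}$ are constructed first with the finer tolerance $\varepsilon/C(\Sigma)$, and their restrictions supply the compatible boundary data that Lemma \ref{chinesecor2b} lifts to $\tilde{s}_N$. You instead induct on the codimension of faces, building one coherent system of dagger approximations $\tilde{v}_{\sigma,C}$ per face and per equivalence class, and then perform a single gluing per element, with $g=s_\alpha$ and $\Sigma$ the codimension-one maps. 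Your decomposition decouples the elements from one another (the error constants depend on $n$ rather than on $N$) and makes the cubical coherence explicit, at the price of the heavier bookkeeping you acknowledge; the paper's induction on $N$ is lighter to write but needs the strengthened statement so that the inductive hypothesis is usable. Two points you should tighten. First, the pre-assignment rule enforcing condition (4) must be stated hereditarily: set $\tilde{v}_{\sigma,C}=v_{\sigma,C}$ whenever $\sigma$ extends $1\mapsto 1$ and \emph{some} $\alpha\in C$ has $s_\alpha|_{\theta_1=1}\in R\langle\underline{\theta}\rangle^\dagger$; then $v_{\sigma,C}$ is automatically a restriction of a dagger element, the rule propagates to every deeper face containing that class, and the exact values can never clash with data fixed at earlier (deeper) stages of your induction. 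As phrased, ``the corresponding value on $\theta_1=1$'' is ambiguous when the class at $\sigma$ merges several classes on the face $\theta_1=1$. Second, the membership $\tilde{s}_\alpha\in\hat{R}\langle\underline{\theta}\rangle^\circ$ is better justified by the openness of the ring of power-bounded elements (as the paper does), since $\hat{R}\langle\underline{\theta}\rangle^\circ$ need not coincide with the unit ball of the chosen Banach norm; for a sufficiently small tolerance the conclusion holds either way.
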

	
	\begin{proof}
		We will actually prove a stronger statement, namely that we can reinforce the previous conditions with the following:
		\begin{enumerate}\setcounter{enumi}{4}
			\item\label{condx}  For any $\alpha,\beta\in\{1,\ldots,N\}$ any subset $T$ of $\{1,\ldots,n\}$ and any map $\sigma\colon T\ra\{0,1\}$ such that $s_\alpha|_{\sigma}=s_\beta|_{\sigma}$ then $\tilde{s}_\alpha|_{\sigma}=\tilde{s}_\beta|_{\sigma}$.
			\item For any $\alpha\in\{1,\ldots,N\}$ any subset $T$ of $\{1,\ldots,n\}$ containing $1$ and any map $\sigma\colon T\ra\{0,1\}$ such that $s_\alpha|_{\sigma}\in R\langle\underline{\theta}\rangle^\dagger$  then $\tilde{s}_\alpha|_{\sigma}={s}_\alpha|_{\sigma}$.
		\end{enumerate}
		Above we denote by $s|_\sigma$ the image of $s$ via the substitution  $(\theta_{t}=\sigma(t))_{t\in T}$. 
		We proceed by induction on $N$, the case $N=0$ being trivial. We remark that if $\epsilon$ is sufficiently small, any element $a$ such that $|a-s_k|<\varepsilon$ lie in  $\hat{R}\langle\underline{\theta}\rangle^\circ$  as this ring is open. We are left to prove that we can pick elements $\tilde{s}_k$ in  $R\langle\underline{\theta}\rangle^\dagger$.
		
		Consider the conditions we want to preserve that involve the index $N$. They are of the form
		\[
		{s}_i|_\sigma={s}_N|_{\sigma}
		\]
		and are indexed by some pairs $(\sigma, i)$ where $i$ is an index and  $\sigma$ varies in a set of maps $\Sigma$. Our procedure consists in determining by induction the elements $\tilde{s}_1,\ldots,\tilde{s}_{N-1}$ first, and then deduce the existence of $\tilde{s}_N$ by means of Lemma \ref{chinesecor2b} by lifting the elements $\{\tilde{s}_i|_{\sigma}\}_{(\sigma,i)}$. 
		Therefore, we first define $\varepsilon'\colonequals\frac{1}{C}\varepsilon$ where $C=C(\Sigma)$ is the constant introduced in Lemma \ref{chinesecor2b} and then apply the induction hypothesis to the first $N-1$ elements with respect to $\varepsilon'$.
		
		By the induction hypothesis, the elements $\tilde{s}_i|_\sigma$ satisfy the compatibility condition of Lemma \ref{chinesecor2b} and lie in $R\langle\underline{\theta}\rangle^\dagger$. 
		By Lemma \ref{chinesecor2b} we can find an element $\tilde{s}_N$ of $R\langle\underline{\theta}\rangle^\dagger$ lifting them 
		such that $|\tilde{s}_N-s_N|<C\varepsilon'=\varepsilon$ as wanted.
	\end{proof}
	
	We now restate Proposition \ref{2dH=02} in more geometric terms.

	\begin{prop}
		\label{sollevacocu}Suppose that $S$ is affinoid.                                                     Let 
		$X $ be an affinoid dagger space smooth over $S $ and 
		let $Y $ be an affinoid dagger algebra such that $\hat{Y}$ is \'etale over $\B^{m}\times \hat{S}$. 
		For a given finite set of maps $\{f_1,\ldots,f_N\}$ in $\Hom_{\hat{S}}(\hat{X}\times\B^{n},\hat{Y})$ we can find corresponding maps $\{H_1,\ldots,H_N\}$ in
		$\Hom_{\hat{S}}(\hat{X}\times\B^{n}\times\B^{1}, \hat{Y})$  
		such that:
		\begin{enumerate}
			\item For all $1\leq k\leq N$ it holds $i_0^*H_k=f_k$ and $i_1^*H_k$ lies in $\Hom_{S }(X \times\B^{n\dagger},Y )$. 
			\item If ${f}_k\circ d_{r,\epsilon}={f}_{k'}\circ d_{r,\epsilon}$ for some $1\leq k,k'\leq N$ and some $(r,\epsilon)\in\{1,\ldots,n\}\times\{0,1\}$ then $H_k\circ d_{r,\epsilon}=H_{k'}\circ d_{r,\epsilon}$.
			\item If for some $1\leq k\leq N$ and some $h\in\N$ the map $f_k\circ d_{1,1}\in \Hom_{\hat{S}}(\hat{X}\times\B^{n-1},\hat{Y})$  lies in $\Hom_{S }(X \times\B^{(n-1)\dagger},Y )$ then the element $H_k\circ d_{1,1}$ of $\Hom_{\hat{S}}(\hat{X}\times\B^{n-1}\times\B^1,\hat{Y})$ is constant on $\B^1$  equal to $f_k\circ d_{1,1}$.
		\end{enumerate}
	\end{prop}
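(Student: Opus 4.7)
The plan is to translate the statement into a question about tuples of power series and reduce it to Proposition \ref{2dH=02}. Using Corollary \ref{etaleoverball}, the hypothesis on $\hat{Y}$ lets us present $Y$ as $\Spa^\dagger R$ with
$$R=\mcO^\dagger(\hat{S})\langle\underline{\sigma},\underline{\tau}\rangle^\dagger/(\underline{P}(\underline{\sigma},\underline{\tau})),$$
where the $P_i$ are polynomials and $\det(\partial P_i/\partial\tau_j)$ is invertible in $R$ and hence in $\hat{R}$ by Corollary \ref{invdag}. Thus any morphism $g\colon\hat{X}\times\B^n\to\hat{Y}$ over $\hat{S}$ is determined by a tuple $(s^g,t^g)$ of elements of $\mcO(\hat{X}\times\B^n)^\circ$ satisfying $\underline{P}(s^g,t^g)=0$, and by the implicit function theorem (Proposition \ref{implicitC}) the $\tau$-component $t^g$ is locally determined by the $\sigma$-component $s^g$.

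The first step is to apply Proposition \ref{2dH=02} simultaneously to the $mN$ elements $\{s_j^{(k)}\}_{j,k}$ arising from the $\sigma$-coordinates of the $f_k$, viewed as elements of $\mcO(\hat{X})\langle\theta_1,\ldots,\theta_n\rangle^\circ$, for a parameter $\varepsilon>0$ to be fixed later. This produces dagger approximations $\tilde{s}_j^{(k)}\in\mcO(\hat{X})\langle\underline{\theta}\rangle^\circ\cap\mcO^\dagger(X)\langle\underline{\theta}\rangle^\dagger$ which preserve all face identifications already satisfied by the $s_j^{(k)}$ and which coincide with $s_j^{(k)}$ at $\theta_1=1$ whenever the latter restriction already lies in the dagger subring.

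The second step is to build $H_k$ by a linear interpolation in a new coordinate $\theta\in\B^1$: take its $\sigma$-coordinates to be $s_H^{(k)}\colonequals(1-\theta)s^{(k)}+\theta\tilde{s}^{(k)}$ and invoke Proposition \ref{implicitC} to extend the initial value $t^{(k)}$ at $\theta=0$ to a power-series lift $t_H^{(k)}$ solving $\underline{P}(s_H^{(k)},t_H^{(k)})=0$. For $\varepsilon$ small enough, these series converge over all of $\hat{X}\times\B^n\times\B^1$ and give a well-defined morphism $H_k\colon\hat{X}\times\B^n\times\B^1\to\hat{Y}$ over $\hat{S}$ with $i_0^*H_k=f_k$. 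At $\theta=1$ the $\sigma$-coordinates $\tilde{s}^{(k)}$ lie in the dagger subring by construction, and since $\mcO^\dagger(X)\langle\underline{\theta}\rangle^\dagger$ is integrally closed in its completion by Proposition \ref{roots3}, the lifted $\tau$-coordinates are automatically dagger, yielding that $i_1^*H_k$ factors through a morphism $X\times\B^{n\dagger}\to Y$; this is (1). For (2), the equality $f_k\circ d_{r,\epsilon}=f_{k'}\circ d_{r,\epsilon}$ implies $s_j^{(k)}|_{\theta_r=\epsilon}=s_j^{(k')}|_{\theta_r=\epsilon}$, which Proposition \ref{2dH=02} transports to $\tilde{s}_j^{(k)}$; the uniqueness part of Proposition \ref{implicitC} then forces the lifts $t_H^{(k)},t_H^{(k')}$ to agree on $\theta_r=\epsilon$. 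For (3), the hypothesis forces $\tilde{s}_j^{(k)}|_{\theta_1=1}=s_j^{(k)}|_{\theta_1=1}$, so the linear interpolation is constant in $\theta$, and again by uniqueness of the implicit lift so is $t_H^{(k)}$.

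The main obstacle is not conceptual but quantitative: one needs $\varepsilon$ small enough so that, uniformly in $k$, the implicit function theorem produces a convergent series on the whole polydisc and lands inside the uniqueness neighborhood of $(s^{(k)},t^{(k)})$. Once this is secured, the faithful transport of face identifications is automatic from the uniqueness of the implicit solution, and all three conclusions become direct consequences of the corresponding clauses of Proposition \ref{2dH=02}.
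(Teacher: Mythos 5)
Your proposal is correct and follows essentially the same route as the paper's proof: present $Y$ via Corollary \ref{etaleoverball}, approximate the $\sigma$-coordinates with Proposition \ref{2dH=02}, define $H_k$ by the linear interpolation $s^{(k)}+\chi(\tilde{s}^{(k)}-s^{(k)})$ combined with the implicit-function power series of Proposition \ref{implicitC}, use Proposition \ref{roots3} to see the $\tau$-coordinates at $\chi=1$ are dagger, and invoke uniqueness of the implicit solution for the face compatibilities. The only cosmetic difference is that the paper fixes $\varepsilon$ after constructing the series $F_k$ (whose radii depend only on the $f_k$), which is exactly the quantitative point you flag and resolve the same way.
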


	\begin{proof}
		We let $S $ be $\Spa A $ with limit $\hat{S}=\Spa \hat{A}$. By Corollary \ref{etaleoverball} we can assume that $Y =\Spa A\langle \sigma_1,\ldots,\sigma_n,\tau_1,\ldots,\tau_m\rangle^\dagger\!/(P_i(\underline{\sigma},\underline{\tau}))$ for $m$ polynomials $P_i\in A [\underline{\sigma},\underline{\tau}]$ such that $\det\left(\frac{\del P}{\del \tau}\right)$ is invertible in $\mcO^\dagger(\hat{Y}) $.

		For any $h\in\Z$ we  denote by $R$ the dagger algebra $\mcO^\dagger(\hat{X})\langle\underline{\theta}\rangle^\dagger$ %
		and by $R\langle\chi\rangle^\dagger$ the dagger algebra associated with $X \times\B^{n\dagger}\times\B^{1\dagger}$. 
		Each $f_k$ is induced by maps $(\underline{\sigma},\underline{\tau})\mapsto({s}_k,{t}_k)$ from $K\langle \underline{\sigma},\underline{\tau}\rangle/(\underline{P})$ to $\hat{R}$ for some $m$-tuples ${s}_k$ and $n$-tuples ${t}_k$ in $R$. 	
		By  Corollary \ref{implicitC}   there exists a sequence of power series $F_k=(F_{k1},\ldots,F_{km})$ associated with each $f_k$ such that
		\[
		(\sigma,\tau)\mapsto(s_k+(\tilde{s}_k-s_k)\chi,F_k(s_k+(\tilde{s}_k-s_k)\chi))\in \hat{R}\langle\chi\rangle\cong\mcO(\hat{X}\times\B^n\times\B^1)
		\]
		defines a map $H_k$ from $\hat{X}\times\B^n\times\B^1$ to $\hat{Y}$  for any choice of $\tilde{s}_k\in \hat{R}^\circ\cap R $ such that $\tilde{s}_k$ is in the convergence radius of $F_k$ and $F_k(\tilde{s}_k)$ is in $\hat{R}^\circ$. 
		
		We prove that any such map satisfies the first claim. By Proposition \ref{daggerballrep} this amounts to prove that the elements $\tilde{t}_k\colonequals F_k(\tilde{s})$ lie in $\hat{R}^\circ\cap R$. By our choice of $\tilde{s}_k$ we already know that they lie in $\hat{R}^\circ$.  We remark that the field $(\Frac R )(\tilde{t}_k)$ is finite \'etale over $\Frac R $. By Lemma \ref{roots3} we deduce that each $\tilde{t}_k$ lies in $R $ as wanted.

		Let now $\varepsilon$ be a positive real number, smaller than all radii of convergence of the series $F_{kj}$ and such that $F(a)\in R $ for all $|a-s|<\varepsilon$. 
		Denote by $\tilde{s}_{ki}$ the elements associated with $s_{ki}$ by applying Proposition \ref{2dH=02} with respect to the chosen $\varepsilon$. In particular, they induce a well-defined map $H_k$  and the elements $\tilde{s}_{ki}$ lie in $\hat{R}^\circ\cap R $. We show that the maps $H_k$ induced by this choice also satisfy the second, third and fourth claims of the proposition.

		Suppose that $f_k\circ d_{r,\epsilon}=f_{k'}\circ d_{r,\epsilon}$ for some $r\in\{1,\ldots,n\}$ and $\epsilon\in\{0,1\}$. This means that $\bar{s}\colonequals s_{k}|_{\theta_r=\epsilon}=s_{k'}|_{\theta_r=\epsilon}$ and $\bar{t}\colonequals t_{k}|_{\theta_r=\epsilon}=t_{k'}|_{\theta_r=\epsilon}$. 
		This implies that both $F_k|_{\theta_r=\epsilon}$ and $F_{k'}|_{\theta_r=\epsilon}$ are two $m$-tuples of formal power series $\bar{F}$ with coefficients in $\mcO(\hat{X}\times\B^{n-1})$ converging around $\bar{s}$ and such that $P(\sigma,\bar{F}(\sigma))=0$, $\bar{F}(\bar{s})=\bar{t}$. By the uniqueness of such power series stated in Corollary \ref{implicitC}, we conclude that they coincide.
		
		Moreover, by our choice of the elements $\tilde{s}_k$ it follows that $\bar{\tilde{s}}\colonequals\tilde{s}_{k}|_{\theta_r=\epsilon}=\tilde{s}_{k'}|_{\theta_r=\epsilon}$. In particular  one has  
		\[
		F_k((\tilde{s}_{k}-s_k)\chi)|_{\theta_r=\epsilon}=\bar{F}((\bar{\tilde{s}}-\bar{s})\chi)=F_{k'}((\tilde{s}_{k'}-s_{k'})\chi)|_{\theta_r=\epsilon}
		\]
		and therefore $H_k\circ d_{r,\epsilon}=H_{k'}\circ d_{r,\epsilon}$ proving the second claim.
		
		The third claim follows from the fact that the elements $\tilde{s}_{ki}$ satisfy the condition (\ref{cond112ex}) of Proposition \ref{2dH=02}.
	\end{proof}

	\section{Dagger Rigid Motives}\label{motives}
	Motives (here, a short form for mixed, derived, effective motives with coefficients in a ring $\Lambda$)  can be defined out of an arbitrary site with products $(\cat,\tau)$ and a choice of a ``contractible object" $I$ inside it. The underlying idea is to construct the universal  category with respect to derived $\Lambda$-functors defined on $\cat$, which   satisfy $\tau$-descent and invariance with respect to the contractible object $I$. Typically, such functors are related to (co-)homology theories. Not surprisingly, the category of motives is simply a Verdier quotient of the derived category of $\tau$-sheaves with values in $\Lambda$-modules $\catD(\Sh_\tau(\cat,\Lambda))$ obtained by imposing the condition that all projections $X\times I\ra X$ are invertible. For the reader familiar with the theory of Voevodsky's motives, we remark that we make no (explicit) use of correspondences, and hence our categories of motives are  \emph{without transfers} (but see  Remark   \ref{remayoubgen}).
	
	Hereunder, we introduce the category of motives $\RigDA^{\dagger\eff}_{\et}(S ,\Lambda)$ associated with the \'etale site on smooth dagger varieties over $S $, inspired by the construction of the motives $\RigDA^{\eff}_{\et}(\hat{S},\Lambda)$ associated with the \'etale site on smooth rigid analytic varieties (see \cite{ayoub-rig}). 
	In the dagger case, the ``contractible object" is the  dagger disc $\B^{1\dagger}$  while in the rigid analytic setting it is the closed disc $\B^1$.

	For computations, and more crucially to invert the Tate twist, the description of motives as Verdier quotients is sometimes unsatisfactory, and specific ``models'' of these triangulated categories are needed to have control on  Hom-sets. This is why the language of model categories is used, applied to the categories of complexes of presheaves. We borrow all the notations from Ayoub (see  \cite{ayoub-rig} and \cite{ayoub-th2}) and we refer to his survey \cite{ayoub-icm} for a gentle explanation of these constructions in the algebraic context. 
	We also refer to \cite{vezz-fw} for a brief collection of some standard results about the change of models, which are based on the results of \cite{ayoub-th2} and inspired by  the classic papers \cite{jardine-s},  \cite{mvw} and  \cite{mv-99}.

	The reader who is not interested in these formal constructions can   skip the technical statements of the section, focusing only on its main result which is Theorem \ref{main}. It is  formally obtained from Proposition \ref{sollevacocu}, proved in the previous section.
	
	From now on, we fix a commutative ring $\Lambda$ and work  with $\Lambda$-enriched categories. In particular, the term ``presheaf'' should be understood as ``presheaf of $\Lambda$-modules'' and similarly for the tem ``sheaf''. The presheaf $\Lambda(X)$ represented by an object $X$ of a category $\cat$  sends an object $Y$ of $\cat$ to the free $\Lambda$-module $\Lambda\Hom_{\cat}(Y,X)$. 
	
	The category $\Ch(\Psh(\cat))$ of complexes of presheaves over a category $\cat$ can be endowed with the \emph{projective model structure} for which weak equivalences are quasi-isomorphisms and fibrations are maps $\mcF\ra\mcF'$ such that $\mcF(X)\ra\mcF'(X)$ is a surjection for all $X$ in $\cat$ (cfr \cite[Section 2.3]{hovey} and \cite[Proposition 4.4.16]{ayoub-th2}). Its homotopy category is the usual (unbounded) derived category $\catD(\Psh(\cat))$.

	\begin{dfn}
		We denote by $\mathcal{S}_{\et}$ the class of   maps $\mcF\ra\mcF'$ in $\Ch\Psh(\RigSm^\dagger\!/S)$ inducing isomorphisms on the $\et$-sheaves associated with $H_i(\mcF)$ and $H_i(\mcF')$ for all $i\in\Z$. We denote by $\mathcal{S}_{\B^{1\dagger}}$ the set of all maps $\Lambda(\B^{1\dagger}\times X )[i]\ra\Lambda( X )[i]$   as $X $ varies in   $\RigSm^\dagger\!/S $ and $i$ varies in $\Z$. We denote by $\mathcal{S}_{(\et,\B^{1\dagger})}$ the union of these two classes. For each $\eta\in\{\et,\B^{1\dagger},(\et,\B^{1\dagger})\}$ we let $\Ch_{\eta}\Psh (\RigSm^\dagger\!/S )$ be the left Bousfield localization of the projective model category on $\Ch\Psh (\RigSm^\dagger\!/S )$ with respect to the class $\mathcal{S}_\eta$.

		The homotopy category of $\Ch_{\et,\B^{1\dagger}}\Psh (\RigSm^\dagger\!/S )$ will be denoted by   $\RigDA^{\dagger\eff}(S ,\Lambda)$ and its element $\Lambda(X )$ will be called the \emph{motive} of $X $ for any dagger space $X $ in $\RigSm/S $.  In case $ S=\Spa K$ then we simply write   $\RigDA^{\dagger\eff}(K,\Lambda)$. 
	\end{dfn}

	\begin{prop}[{\cite[Proposition 3.3]{vezz-fw}}]\label{loctop}
		The localizations introduced above are well defined. Moreover,  the  category of $\Ch_{\et}\Psh (\RigSm^\dagger\!/S )$ is Quillen-equivalent to the (unbounded) derived category of $\et$-sheaves   $\catD(\Sh(\RigSm^\dagger\!/S ))$. It is also Quillen-equivalent to the localization over $\mathcal{S}_{\et}$ of the projective structure on $\Ch\Psh (\cat)$ for any full subcategory $\cat$ generating the same \'etale topos. 
	\end{prop}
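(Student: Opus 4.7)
My plan is to reduce each of the three assertions to standard machinery of left Bousfield localizations of cofibrantly generated model categories, in the spirit of \cite{ayoub-th2}.

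First, for the well-definedness of the three localizations, I would invoke the existence theorem for left Bousfield localizations: the projective model structure on $\Ch\Psh(\RigSm^\dagger\!/S )$ is cofibrantly generated, left proper, and its underlying category is locally presentable, so it suffices to replace each class $\mathcal{S}_\eta$ by a set of generating morphisms. The class $\mathcal{S}_{\B^{1\dagger}}$ is already a set by definition. For $\mathcal{S}_{\et}$ I would argue that, up to the standard saturation operations (closure under the $2$-out-of-$3$ property, pushouts, and transfinite compositions), it is generated by Čech-nerve maps $\check{C}(\mcU/X )\to \Lambda(X )$ of étale covers of objects of $\RigSm^\dagger\!/S $, together with a set of hypercovers; this is a set because the étale covers of a given affinoid dagger space are indexed by a set. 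The class $\mathcal{S}_{(\et,\B^{1\dagger})}$ is then handled by taking the union, or equivalently by two successive Bousfield localizations.

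Second, for the identification with $\catD(\Sh(\RigSm^\dagger\!/S ))$, my plan is to deploy the Quillen adjunction
\[
a^*\colon \Ch\Psh(\RigSm^\dagger\!/S )\rightleftarrows\Ch\Sh(\RigSm^\dagger\!/S )\colon\iota
\]
between sheafification and the forgetful functor, the right-hand side being endowed with its local projective model structure. By construction the $\mathcal{S}_{\et}$-local equivalences are precisely the maps that $a^*$ sends to quasi-isomorphisms of complexes of sheaves, so after localization this adjunction becomes a Quillen equivalence; the homotopy category of the right-hand side is by definition the unbounded derived category of \'etale sheaves.

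Third, the independence from the choice of a full generating subcategory $\cat$ follows by a double-localization argument. The restriction and pointwise left Kan extension along $\cat\hookrightarrow \RigSm^\dagger\!/S $ form a Quillen adjunction which survives localization at $\mathcal{S}_{\et}$; applying the previous step on both sides identifies both homotopy categories with the derived category of the common \'etale topos, which forces the adjunction to be a Quillen equivalence. The main obstacle I expect to face is the careful treatment of unbounded hyperdescent: for sites of infinite cohomological dimension, \v{C}ech descent alone does not characterize sheaves in the derived sense, and one must either enlarge the localizing set to include hypercovers or pass through the local injective model structure where hyperdescent is automatic for fibrant objects. Once this framework is set up, each of the three claims becomes a formal manipulation of Quillen functors.
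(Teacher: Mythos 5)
Your plan is correct in substance, and it is worth noting that the paper itself offers no argument for this statement beyond the citation of \cite[Proposition 3.3]{vezz-fw}, which in turn rests on the general machinery of \cite{ayoub-th2} (existence of the $\tau$-local projective model structure, its Quillen equivalence with the local structure on sheaves, and invariance under change of site generating the same topos). Your second and third steps follow exactly that skeleton: the sheafification--inclusion adjunction becomes a Quillen equivalence after $\et$-localization because $\mathcal{S}_{\et}$ is by definition the preimage of the quasi-isomorphisms of sheaves under an exact sheafification, and the independence of the generating subcategory $\cat$ is obtained by identifying both localized categories with $\catD(\Sh(\RigSm^\dagger\!/S))$. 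Where you genuinely diverge is the first step: instead of invoking Ayoub's direct existence theorem for the $\et$-local structure (which proceeds by showing the class of local equivalences is accessible), you propose to exhibit a generating \emph{set} consisting of \v{C}ech nerves and hypercovers, DHI-style. This is a legitimate alternative, and you correctly flag the hyperdescent subtlety for unbounded complexes; but be aware that the assertion that a set of hypercovers has exactly $\mathcal{S}_{\et}$ as its class of local equivalences is itself a nontrivial theorem (the chain-complex analogue of the Dugger--Hollander--Isaksen result), so this route trades Ayoub's accessibility argument for a different black box rather than eliminating one. Likewise, in the last step the abstract identification of the two homotopy categories with $\catD(\Sh(\RigSm^\dagger\!/S))$ does not by itself force the Kan-extension adjunction to be a Quillen equivalence: you need the (routine but necessary) compatibility that sheafification after left Kan extension along $\cat\hookrightarrow\RigSm^\dagger\!/S$ agrees with the inverse image of the induced equivalence of \'etale topoi, so that the derived unit and counit are identified with isomorphisms. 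With those two supporting facts made explicit, your argument is a faithful reconstruction of the proof the paper delegates to the references.
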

	
	\begin{rmk}
		In particular $\Ch_{\et,\B^{1\dagger}}\Psh (\RigSm^\dagger\!/S )$    is Quillen equivalent to the localization over $\mathcal{S}_{(\et,\B^{1\dagger})}$ of $\Ch\Psh (\AffSm^\dagger\!/S )$.
	\end{rmk}
	
	In \cite{ayoub-rig} the category $\RigDA^{\eff}_{\et}(\hat{S},\Lambda)$ is introduced. It can be defined as the homotopy category of the localization of $\Ch\Psh(\RigSm/\hat{S})$ over the class $\mathcal{S}_{\et}$ defined before (in the context of rigid analytic spaces) and over the class $\mathcal{S}_{\B^1}$ containing all maps $\Lambda(X\times\B^1)[i]\ra\Lambda(X)[i]$. It is a monoidal category such that the tensor product $\Lambda(X)\otimes\Lambda(Y)$ of two motives associated with varieties $X$ and $Y$ coincides with $\Lambda(X\times_{\hat{S}} Y)$ and the unit object is $\Lambda(\hat{S})$. The same is true for the dagger counterpart.
	
	\begin{prop}[{\cite[Propositions 4.2.76 and 4.4.63]{ayoub-th2}}]
		The category $\RigDA^{\dagger\eff}_{\et}(S ,\Lambda)$ is monoidal. The tensor product $\Lambda(X )\otimes\Lambda(Y )$ of two motives associated with varieties $X $ and $Y $ coincides with $\Lambda(X  \times_{S } Y )$ and the unit object is $\Lambda(S )$.
	\end{prop}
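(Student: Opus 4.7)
The plan is to apply Ayoub's abstract framework for monoidal Bousfield localizations. The starting point is that $\AffSm^\dagger\!/S$ admits fibered products over $S$ (recorded in the earlier proposition on coproducts of dagger algebras), so Day convolution endows $\Ch\Psh(\AffSm^\dagger\!/S)$ with a symmetric monoidal structure whose tensor product on representables is $\Lambda(X)\otimes\Lambda(Y)=\Lambda(X\times_S Y)$ and whose unit is $\Lambda(S)$. By general nonsense (cf.\ \cite{ayoub-th2}), the projective model structure on $\Ch\Psh(\AffSm^\dagger\!/S)$ is a monoidal model structure, since the generating (trivial) cofibrations are of the form $\Lambda(X)\otimes\partial D^n\to\Lambda(X)\otimes D^n$ and satisfy the pushout-product axiom against representables.

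The core task is then to verify that the left Bousfield localization at $\mathcal{S}_{(\et,\B^{1\dagger})}$ remains monoidal. By the criteria cited from [ayoub-th2, Propositions 4.2.76, 4.4.63], one has to show that each of the localizing classes $\mathcal{S}_\et$ and $\mathcal{S}_{\B^{1\dagger}}$ is stable, up to weak equivalence, under tensoring with a cofibrant representable $\Lambda(Y)$ for $Y\in\AffSm^\dagger\!/S$. For the $\B^{1\dagger}$-class the check is a triviality: the tensor product of a generator $\Lambda(\B^{1\dagger}\times X)[i]\to\Lambda(X)[i]$ with $\Lambda(Y)$ equals $\Lambda(\B^{1\dagger}\times X\times_S Y)[i]\to\Lambda(X\times_S Y)[i]$, which is again a generator (with $X$ replaced by $X\times_S Y$).

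For the étale class, the stability is the one requiring genuine input: one needs that if $\mcF\to\mcF'$ is an étale local quasi-isomorphism then so is $\mcF\otimes\Lambda(Y)\to\mcF'\otimes\Lambda(Y)$. This reduces, via resolution by representables and the compatibility of étale sheafification with filtered colimits, to the statement that for any étale cover $\{U_i\to X\}$ in $\RigSm^\dagger\!/S$ the collection $\{U_i\times_S Y\to X\times_S Y\}$ is again an étale cover; this is immediate from stability of smooth (hence étale) morphisms and of topological coverings under base change in $\RigSm^\dagger\!/S$. I expect this verification to be the only nontrivial step, and it is essentially the same check that underlies the monoidality of $\RigDA^{\eff}_{\et}(\hat S,\Lambda)$ in \cite{ayoub-rig}.

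Having established stability of both localizing classes, the cited propositions of \cite{ayoub-th2} imply that $\Ch_{\et,\B^{1\dagger}}\Psh(\AffSm^\dagger\!/S)$ is itself a monoidal model category, so its homotopy category inherits a closed symmetric monoidal structure. Since each $\Lambda(X)$ with $X\in\AffSm^\dagger\!/S$ is cofibrant, the derived tensor product agrees with the underived one on representables, giving $\Lambda(X)\otimes^L\Lambda(Y)=\Lambda(X\times_S Y)$ and unit $\Lambda(S)$; finally, the Quillen equivalence from the remark following the definition extends the identifications from $\AffSm^\dagger\!/S$ to all of $\RigSm^\dagger\!/S$, yielding the proposition.
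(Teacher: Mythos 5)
Your proposal is correct and follows exactly the route the paper intends: the paper offers no argument of its own but simply cites Ayoub's general results on monoidal presheaf categories and the compatibility of the $(\et,\B^{1\dagger})$-localization with the tensor structure, which is precisely the machinery you unfold (Day-type tensor on representables via $X\times_S Y$, monoidality of the projective structure, stability of the two localizing classes under $\otimes\Lambda(Y)$ using that étale covers and the $\B^{1\dagger}$-projections are preserved by base change, and cofibrancy of representables). Your verifications are the ones hidden in that citation, so there is nothing to add.
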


	Motives have been introduced as quotients of the derived categories of presheaves. On the other hand, the canonical quotient functor admits  a fully faithful right adjoint. Therefore, motives can equally be defined as triangulated subcategories of the derived categories of presheaves, which is particularly useful for computing their $\Hom$-sets. We now investigate better this point of view (see \cite{bv-dg}).
	
	\begin{dfn}
		For $\eta\in\{\et, \B^{1\dagger},(\et,\B^{1\dagger})\}$ we say that a map   in    $\Ch\Psh (\RigSm^\dagger\!/S )$ is a \emph{$\eta$-weak equivalence} if it is a weak equivalence in the model structure     $\Ch_{\eta}\Psh (\RigSm^\dagger\!/S )$.

		We say that an object $\mcF$ of the derived category $\catD=\catD(\Psh (\RigSm^\dagger\!/S ))$ is \emph{$\eta$-local} if the functor $\Hom_{\catD}(\cdot,\mcF)$ sends maps in $S_\eta$ to isomorphisms. This amounts to say that $\mcF$ is  quasi-isomorphic to a $\eta$-fibrant object. We use the same terminology for the model categories on $\Ch\Psh (\cat)$ for any full subcategory $\cat$ generating the same \'etale topos. 
	\end{dfn}
	
	\begin{rmk}
		The existence of these  localizations at the level of model categories is granted by the results of Hirschhorn \cite{hirschhorn} used in the references above. At the level of the homotopy categories, using the language of \cite{bv-dg}, these localizations  induce endofunctors $C^\eta$ of  
		$\catD(\Psh(\RigSm^\dagger\!/S ))$ 
		such that $C^\eta\mcF$ is $\eta$-local for all $\mcF$ and there is a natural transformation $C^\eta\ra\id$ which is a pointwise $\eta$-weak equivalence. 
		The  functor $C^\eta$ restricts to a triangulated equivalence on the objects $\mcF $ that are $\eta$-local  
		and one can compute the Hom set $\Hom(\mcF,\mcF')$ in  the homotopy category of the $\eta$-localization  as ${\catD}(\mcF,C^\eta\mcF')$. The same is true for the category $\catD(\Psh(\cat))$ for any subcategory $\cat$ generating the same \'etale topos.
	\end{rmk}

	\begin{rmk}\label{Cet}
		By means of \cite[Proposition 4.4.59]{ayoub-th2} the complex $C^{\et}\mcF$ is such that $$\mathbf{D}(\Lambda(X )[-i],C^{\et}\mcF)=\HH_{\et}^i(X ,\mcF)$$ for all $X $ in $\RigSm^\dagger\!/S $ and all integers $i$. This property characterizes $C^{\et}\mcF$ up to quasi-isomorphisms. We remark that there is an explicit construction of $C^{\et}\mcF$ using the Godement resolution (see \cite[Paragraph 1.11]{bv-dg}).
	\end{rmk}

	There is also a  characterization of the $\B^{1}$-localization, as described in \cite{ayoub-rig} and \cite{vezz-fw}. Such descriptions admit a natural dagger analogue, that we now describe.   This is based on the fact that  $\B^{1\dagger}$ is an interval object (see \cite{riou}).

	\begin{rmk}
		Let $\B^1_h$ be the open $U(\pi\chi^k/1)=U((\pi\chi)^k/\pi^{k-1})$ in $\Spa K\langle\pi\chi\rangle$. 	The dagger variety $ \B^{1\dagger}$ is an interval object with respect to the maps $i_0$ and $i_1$ induced by the points $\chi\mapsto 0$ and $\chi\mapsto1$ respectively, and the multiplication induced by the  multiplication on the limit $\B^1 $. Indeed, the map $\B^1\times\B^1\ra\B^1\ra \B^1_h$ factors over $\B^1_{2h}\times \B^1_{2h}$.
	\end{rmk}

	In the following part, we have opted for the cubical rather than the simplicial approach in order to have clearer computations, and a stronger parallel with the perfectoid case \cite{vezz-fw}.
	
	\begin{dfn}\label{cocu}
		We denote by $\square^\dagger$ %
		the $\Sigma$-enriched cocubical object (see \cite[Appendix A]{ayoub-h1}) defined by putting $\square^{\dagger n}=\B^{n\dagger }\times S=\Spa K\langle \tau_1,\ldots,\tau_n\rangle^\dagger\times S $ %
		and considering the morphisms $d_{r,\epsilon}$ induced by the maps $\B^{ n\dagger}\ra\B^{(n+1)\dagger}$ corresponding to the substitution $\tau_r=\epsilon$ for $\epsilon\in\{0,1\}$ and the morphisms $p_r$ induced by the projections $\B^{n\dagger}\ra\B^{(n-1)\dagger}$. %
		For any  dagger variety $X $ %
		and any presheaf $\mcF$ of dagger varieties [resp. rigid varieties] with values in an abelian category, we can therefore consider the $\Sigma$-enriched cubical object  $\mcF(X \times_{S}\square^\dagger)$ %
		(see \cite[Appendix A]{ayoub-h1}). Associated to any $\Sigma$-enriched cubical object $\mcF$ there are the following complexes: the complex $C^\sharp_\bullet\mcF$ defined as $C^\sharp_n\mcF=\mcF_n$ and with differential $\sum (-1)^r (d_{r,1}^*-d_{r,0}^*)$; the \emph{simple complex} $C_\bullet\mcF$ defined as $C_n\mcF=\bigcap_{r=1}^n\ker d_{r,0}^*$ and with differential $\sum (-1)^r d_{r,1}^*$; the \emph{normalized complex} $N_\bullet\mcF$ defined as $N_n\mcF=C_n\cap\mcF\bigcap_{r=2}^n\ker d_{r,1}^*$ and with differential $-d_{1,1}^*$. By \cite[Lemma A.3, Proposition A.8, Proposition A.11]{ayoub-h2}, the inclusion $N_\bullet\mcF\hookrightarrow C_\bullet\mcF$ is a quasi-isomorphism and both inclusions $C_\bullet\mcF\hookrightarrow C^\sharp_\bullet\mcF$ and $N_\bullet\mcF\hookrightarrow C_\bullet\mcF$ split. For any complex of presheaves $\mcF$ of dagger varieties %
		we let $\Sing^{\B^{1\dagger}}\mcF$  
		be the total complex of the simple complex associated with   $\uhom(\Lambda(\square^\dagger),\mcF)$.  
		It sends the object $X$ to the total complex of the simple complex associated with $\mcF(X\times_{S }\square^\dagger)$. 
	\end{dfn}
	
	We now show that the complex $\Sing^{\B^{1\dagger}}\mcF$ defined above gives rise to the ``universal'' homotopy-invariant cohomology theory  attached to $\mcF$.

	\begin{prop}\label{sing}
		Let $\mcF$ be a complex in $\Ch\Psh(\AffSm^\dagger\!/S )$. 
		\begin{enumerate}
			\item $\Sing^{\B^{1\dagger}}\mcF$ is $\B^{1\dagger}$-local and $\B^{1\dagger}$-weak equivalent to $\mcF$. %
			\item $\Sing^{\B^{1\dagger}}C^{\et}\mcF$ is $(\et,\B^{1\dagger})$-local and $(\et,\B^{1\dagger})$-weak equivalent to $\mcF$.
		\end{enumerate}
	\end{prop}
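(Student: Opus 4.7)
My plan is to handle (1) by the standard interval-object formalism and then bootstrap to (2) by combining it with termwise \'etale descent.

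For (1), I would exploit the interval structure on $\B^{1\dagger}$ recorded in the remark above Definition \ref{cocu}: the multiplication $m\colon \B^{1\dagger}\times\B^{1\dagger}\to\B^{1\dagger}$ yields a natural cochain homotopy between the identity and the composition with either $i_0^*$ or $i_1^*$ on the total complex of $\mcF(X\times_S\square^{\dagger\bullet})$, which gives $\B^{1\dagger}$-locality of $\Sing^{\B^{1\dagger}}\mcF$. The weak equivalence $\mcF\to\Sing^{\B^{1\dagger}}\mcF$ is induced by the inclusion of the degree-zero component of the normalized complex, which is a direct summand by the splittings recalled in Definition \ref{cocu}, the complementary summand being $\B^{1\dagger}$-contractible via the same interval homotopy. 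This is the same argument used in the analogous propositions for rigid analytic and perfectoid motives in \cite[Corollary 2.5.36]{ayoub-rig} and \cite{vezz-fw}, and it transfers verbatim since it only uses that $\square^\dagger$ is a $\Sigma$-enriched cocubical object equipped with an interval multiplication.

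For (2), I would first apply (1) to the complex $C^{\et}\mcF$, obtaining both $\B^{1\dagger}$-locality of $\Sing^{\B^{1\dagger}}C^{\et}\mcF$ and a $\B^{1\dagger}$-weak equivalence $C^{\et}\mcF\to\Sing^{\B^{1\dagger}}C^{\et}\mcF$. Composing with the \'etale weak equivalence $\mcF\to C^{\et}\mcF$ from Remark \ref{Cet} yields the claimed $(\et,\B^{1\dagger})$-weak equivalence. For the remaining locality statement, I would check that $\Sing^{\B^{1\dagger}}$ preserves \'etale-local objects by a termwise argument: whenever $\mcU_\bullet\to X$ is an \'etale hypercover in $\RigSm^\dagger/S$, so is $\mcU_\bullet\times_S\B^{n\dagger}\to X\times_S\B^{n\dagger}$, and \'etale descent for $C^{\et}\mcF$ applied level by level, combined with a spectral sequence in the cubical direction, shows that $\Hom_{\catD}(\Lambda(X)[-i],\Sing^{\B^{1\dagger}}C^{\et}\mcF)$ computes \'etale hypercohomology as demanded by the characterization in Remark \ref{Cet}.

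The main obstacle I foresee lies in producing the cochain homotopy in (1): one must carefully assemble the interval multiplication $m$ into a chain-level homotopy compatible with the face maps $d_{r,\epsilon}$ and the projections $p_r$ of the $\Sigma$-enriched cocubical structure, so that it descends to the simple and normalized complexes. Once that standard but slightly delicate step is executed, the rest of the argument is a formal rearrangement of bicomplexes and a termwise check of \'etale descent.
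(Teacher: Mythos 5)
Your part (1) and the weak-equivalence half of part (2) follow the same route as the paper, which itself defers to the cubical interval argument of \cite[Proposition 3.15]{vezz-fw}; that part is fine. The gap is in the locality half of (2). You check \'etale descent for $C^{\et}\mcF$ level by level in the cubical direction and then invoke ``a spectral sequence in the cubical direction''. But $\Sing^{\B^{1\dagger}}C^{\et}\mcF$ is the totalization of a complex which is unbounded in the cubical direction, and \'etale hypercovers $\mcU_\bullet\ra X$ are in general unbounded as well; the resulting descent spectral sequence is a half-plane spectral sequence whose convergence is precisely the point at issue. Without an additional finiteness input, termwise \'etale descent does not pass to the totalization, so the $\et$-locality of $\Sing^{\B^{1\dagger}}C^{\et}\mcF$ is not established by your argument as written.

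That missing finiteness is in fact essentially the whole content of the paper's proof: it first shows that affinoid dagger varieties have finite \'etale cohomological dimension, by combining \cite[Corollary 1.2.19]{ayoub-rig} with the equivalence of \'etale topoi of Corollary \ref{eqtop}; by \cite[Proposition 3.10]{vezz-fw} this implies that the $\et$-localization on $\Ch\Psh(\AffSm^\dagger\!/S)$ coincides with the localization at the maps $\Lambda(\mcU_\bullet)[i]\ra\Lambda(X)[i]$ for \emph{bounded} \'etale hypercovers of affinoid objects, and only then does the termwise-descent-plus-spectral-sequence argument of \cite[Proposition 3.15 and Corollary 3.16]{vezz-fw} go through, since boundedness makes the relevant double complex bounded in one direction and the spectral sequence convergent. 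Note also that the ``main obstacle'' you single out --- assembling the interval multiplication into a chain homotopy compatible with the $\Sigma$-enriched cocubical structure --- is standard and already carried out in the references; the genuinely non-formal step is the cohomological-dimension/boundedness argument that your proposal omits.
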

	
	\begin{proof}
		From \cite[Corollary 1.2.19]{ayoub-rig} and Corollary \ref{eqtop} we obtain that the \'etale cohomological dimension of any affinoid dagger variety $X $ is finite. Following \cite[Proposition 3.10]{vezz-fw} we can then conclude  that the $\et$-localization coincides with the localization over the set $\{\Lambda(\mcU _\bullet)[i]\ra\Lambda(X )[i]\}$  as $\mcU_\bullet \ra X $ varies among
		bounded \'etale hypercoverings of the objects $X $ in $\AffSm/S $ and $i$ varies in $\Z$. The result then follows in the same way as \cite[Proposition 3.15 and Corollary 3.16]{vezz-fw}.
	\end{proof}
	
	\begin{prop}\label{6f}
		Let $f\colon S \ra T $ be a morphism of dagger spaces.
		\begin{enumerate}
			\item The Kan extension of the functor 
			$$
			\begin{aligned}
			f^*\colon\RigSm^\dagger\!/T &\ra\RigSm^\dagger\!/S \\
			(X \ra T )&\mapsto (X \times_{T }S \ra S )	
			\end{aligned}$$
			induces a Quillen pair
			$$
			\adj{f^*}{\Ch_{\et,\B^{1\dagger}}\Psh(\RigSm^\dagger\!/T )}{\Ch_{\et,\B^{1\dagger}}\Psh(\RigSm^\dagger\!/S )}{f_*}
			$$
			such that the functor $\LL f^*$ is monoidal.
			\item If $f$ is smooth, the Kan extension of the functor
			$$
			\begin{aligned}
			f_\sharp\colon	\RigSm^\dagger\!/S &\ra\RigSm^\dagger\!/T \\
			(X \ra S )&\mapsto(X \ra T )
			\end{aligned}$$
			induces a Quillen pair
			$$
			\adj{f_\sharp}{\Ch_{\et,\B^{1\dagger}}\Psh(\RigSm^\dagger\!/S )}{\Ch_{\et,\B^{1\dagger}}\Psh(\RigSm^\dagger\!/T )}{f^*}
			$$
		\end{enumerate}
	\end{prop}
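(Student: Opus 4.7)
The plan is to follow a standard two-step procedure: first establish each adjunction as a Quillen pair for the unlocalized projective model structure on $\Ch\Psh$, then descend to the $(\et,\B^{1\dagger})$-localization by Hirschhorn's criterion for left Bousfield localizations. A key technical shortcut is the reformulation of the étale localization via hypercovers afforded by Proposition \ref{loctop}, which replaces the unwieldy class $\mathcal{S}_{\et}$ with a more tractable generating set of maps indexed by étale hypercovers.

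For part (1), the right adjoint $f_*$ is restriction along $f^*\colon\RigSm^\dagger\!/T\ra\RigSm^\dagger\!/S$; since fibrations and trivial fibrations in the projective structure are detected objectwise, $f_*$ is right Quillen on unlocalized presheaves. To descend to the localization it then suffices to verify that $\LL f^*$ sends each generator in $\mathcal{S}_{\et}\cup\mathcal{S}_{\B^{1\dagger}}$ to an $(\et,\B^{1\dagger})$-equivalence on the $S$-side. The $\B^{1\dagger}$-generators go to generators because $f^*(X\times_T\B^{1\dagger})\cong f^*X\times_S\B^{1\dagger}$, where the disc is understood as the relative one over the respective base. For the étale generators, using the hypercover reformulation, the claim reduces to the topological fact that the pullback of an étale cover in $\RigSm^\dagger\!/T$ along $f$ remains an étale cover in $\RigSm^\dagger\!/S$. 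Monoidality of $\LL f^*$ follows from $f^*(X\times_T Y)\cong f^*X\times_S f^*Y$ and $f^*\Lambda(T)=\Lambda(S)$ at the level of representables, whence strong monoidality is inherited by the derived functor through the symmetric monoidal model structure.

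For part (2), smoothness of $f$ is exactly what guarantees that $f_\sharp$ is well defined, since the composition of smooth morphisms is smooth. The adjunction $f_\sharp\dashv f^*$ on representables is the universal property $\Hom_T(X,Y)\cong\Hom_S(X,Y\times_T S)$ for $X\in\RigSm^\dagger\!/S$ and $Y\in\RigSm^\dagger\!/T$. Here again $f^*$ is right Quillen on the projective structure, and to pass to the localization one checks that $f_\sharp$ sends generators to generators: a $\B^{1\dagger}$-equivalence over $S$ has the same underlying dagger spaces as its image over $T$ (only the structure map is composed with $f$), and étale hypercovers over $S$ remain étale hypercovers over $T$.

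The principal non-trivial input is the hypercover description of the étale localization supplied by Proposition \ref{loctop}, which permits reducing the otherwise opaque class $\mathcal{S}_{\et}$ to covers that behave well under both base change and restriction of structure map; everything else follows formally from the compatibility of $f^*$ with fiber products and from the stability of smoothness under composition and base change. No delicate approximation technique from Section \ref{approx} is needed for this proposition; it is a formal consequence of the definitions.
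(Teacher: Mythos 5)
Your argument is correct and follows essentially the same route as the paper: the paper's proof simply compresses your two steps (Quillen pair on the projective structure, then descent to the $(\et,\B^{1\dagger})$-localization via continuity of the site-level functors and the formulas $f^*(X\times_T Y)\cong f^*X\times_S f^*Y$ and $f_\sharp(\B^{1\dagger}\times X)\cong\B^{1\dagger}\times X$) into a citation of the general result \cite[Proposition 4.4.61]{ayoub-th2}, which is exactly the machinery you unpack by hand. The only cosmetic point is that the hypercover description of the \'etale localization you invoke is the one used in the proof of Proposition \ref{sing} (via \cite[Proposition 3.10]{vezz-fw}, using finite cohomological dimension) rather than literally Proposition \ref{loctop}, but this does not affect the argument.
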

	
	\begin{proof}
		The statement is a formal consequence of the continuity of the two functors, together with \cite[Proposition 4.4.61]{ayoub-th2} and the formulas $f^*(X \times_{T } Y )\cong f^*(X )\times_{S }f^*(Y )$ and $f_\sharp(\B^{1\dagger}\times X )\cong\B^{1\dagger}\times X $.
	\end{proof}

	We are now interested in finding 
	a convenient set of compact objects which generate the  categories above, as triangulated categories with small sums. This will simplify many definitions and proofs in what follows. We first briefly recall the notion of compactness in triangulated categories.

	\begin{dfn}\label{cpt}
		An object $X$ of a triangulated category with small sums $\catT$ is \emph{compact} if for any small collection $\{Y_i\}$ of objects in $\catT$ one has  \[\Hom(X,\bigoplus Y_i)\cong\bigoplus\Hom(X,Y_i).\]
	\end{dfn}
	
	\begin{exm}
		If $R$ is a ring, compact objects in $\catD(R)$ are complexes which are quasi isomorphic to bounded complexes of finite projective $R$-modules (see e.g. \cite[Tag 07LT]{stacks-project}).
	\end{exm}
	
	\begin{dfn}
		A triangulated category $\catT$ is \emph{compactly generated (as a triangulated category with small sums)} by a set $S$ of objects if all objects in $S$ are compact and if $\catT$ coincides with its smallest triangulated subcategory with small sums containing $S$.
	\end{dfn}
	
	\begin{prop}\label{genRigDA}
		The category $\RigDA_{\et}^{\dagger\eff}(S ,\Lambda)$ 
		is compactly generated (as a triangulated category with small sums) by motives $\Lambda(X )$ associated with  affinoid dagger varieties $X $    which are \'etale over some dagger poly-disc  $\B^{m\dagger}\times V $ for some open affinoid subspace $V\subset S$.
	\end{prop}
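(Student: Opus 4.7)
The plan splits into two parts: showing that each $\Lambda(X)$ with $X$ étale over some $\B^{m\dagger}\times V$ is compact, and showing that such motives generate $\RigDA_{\et}^{\dagger\eff}(S,\Lambda)$ as a triangulated category with small sums.

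For the compactness of $\Lambda(X)$, I would combine Proposition~\ref{sing} and Remark~\ref{Cet} to rewrite, for any motive $\mcG$,
$$\Hom_{\RigDA_{\et}^{\dagger\eff}(S,\Lambda)}(\Lambda(X)[-i],\mcG) \;\cong\; H^i\bigl((\Sing^{\B^{1\dagger}}C^{\et}\mcG)(X)\bigr).$$
The right-hand side is the cohomology of the direct-sum total complex of the double complex $n\mapsto (C^{\et}\mcG)(X\times_{S}\square^{\dagger n})$, whose columns compute $\HH^*_{\et}(X\times_{S}\square^{\dagger n},\mcG)$. Each $X\times_{S}\square^{\dagger n}$ is an affinoid dagger variety, so it is quasi-compact of finite étale cohomological dimension by the argument already used in the proof of Proposition~\ref{sing} (via Corollary~\ref{eqtop} and \cite[Corollary~1.2.19]{ayoub-rig}). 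Hence each column commutes with small sums in $\mcG$; since taking the direct-sum total complex along the $\N$-indexed cocubical direction and then cohomology also commute with sums of $\Lambda$-modules, so does $\Hom(\Lambda(X)[-i],-)$, yielding compactness.

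For the generation claim, I would invoke Proposition~\ref{loctop} applied to the full subcategory $\AffSm^\dagger\!/S$ of $\RigSm^\dagger\!/S$ (both generate the same étale topos): the category $\RigDA_{\et}^{\dagger\eff}(S,\Lambda)$ is tautologically generated, as a triangulated category with small sums, by the motives $\Lambda(Y)$ with $Y$ affinoid. Given such a $Y$, Proposition~\ref{smooth} produces a finite cover by rational open subspaces $\{U_j\to Y\}$, each étale over some dagger poly-disc $\B^{m_j\dagger}\times V_j$ with $V_j\subset S$ affinoid rational. All iterated intersections $U_{j_0}\cap\cdots\cap U_{j_r}$ are rational open subspaces of $U_{j_0}$, hence remain étale over $\B^{m_{j_0}\dagger}\times V_{j_0}$ and belong to the prescribed class. Étale descent, which holds in the $(\et,\B^{1\dagger})$-localised model structure, then realises $\Lambda(Y)$ as the homotopy colimit of the \v{C}ech nerve of the cover $\{U_j\to Y\}$, placing it in the triangulated subcategory with small sums generated by the proposed objects.

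The most delicate point is the compactness argument, where one must simultaneously commute three operations (the étale hypercohomology of each slice $X\times_{S}\square^{\dagger n}$, the cocubical totalisation, and cohomology of complexes of $\Lambda$-modules) with small direct sums. The first of these requires the finiteness of the étale cohomological dimension of affinoid dagger varieties, the essential geometric input; the second and third are automatic, since the total complex is formed using direct sums along the $\N$-indexed chain (degree-lowering) cocubical differential, and cohomology of $\Lambda$-modules commutes with arbitrary sums.
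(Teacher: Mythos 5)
Your proposal is correct, and its two halves relate to the paper's proof in different ways. The compactness half is essentially the paper's own argument in a slightly different dress: the paper also reduces to showing that direct sums of $\et$-local complexes are $\et$-local (equivalently, that $\HH^*_{\et}$ of an affinoid dagger variety commutes with small sums), using the finiteness of the \'etale cohomological dimension via Corollary \ref{eqtop} and \cite[Corollary 1.2.19]{ayoub-rig}, together with the fact that $\Sing^{\B^{1\dagger}}$ commutes with sums; your column-by-column reformulation through $\Sing^{\B^{1\dagger}}C^{\et}\mcG$ is the same mechanism (the paper handles arbitrary sums by writing them as filtered colimits of finite ones and quoting \cite[Proposition 4.5.62]{ayoub-th2}). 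The generation half is where you genuinely diverge: the paper argues in one stroke that, since every smooth dagger variety is locally \'etale over a dagger poly-disc (Proposition \ref{smooth}), the functors $H_i\RHom(\Lambda(X),\cdot)$ with $X$ in the prescribed class detect quasi-isomorphisms between \'etale-local objects, and then concludes generation from this joint conservativity together with compactness (the standard Neeman-type argument that a set of compact objects whose co-represented functors are jointly conservative generates). You instead reduce to affinoid representables via the change-of-site equivalence of Proposition \ref{loctop} and then exhibit each $\Lambda(Y)$ explicitly in the localizing subcategory by \v{C}ech descent for a finite cover by opens \'etale over poly-discs, using that the iterated intersections stay in the prescribed class. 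This is a valid and more constructive route; its only extra cost is the (standard, but worth stating) fact that the homotopy colimit of the \v{C}ech simplicial object can be built from its terms using coproducts, cones and sequential homotopy colimits of skeleta, so that it indeed stays in the triangulated subcategory with small sums generated by them, whereas the paper's detection argument gets generation formally once compactness is in place. Both proofs rest on the same geometric inputs (Propositions \ref{smooth} and \ref{sing}, Corollary \ref{eqtop}), so nothing is missing; your version is a little longer but avoids invoking the abstract generation criterion explicitly.
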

	
	\begin{proof}
		Since any smooth dagger variety is locally \'etale over a dagger poly-disc over a rational open of $S $ by Proposition \ref{smooth},  the set of functors $H_i\RHom(\Lambda(X ),\cdot)$ detect quasi-isomorphisms between \'etale local objects, by letting $X $ vary among  spaces of the prescribed form  and $i$ vary in $\Z$.  We are left to prove that the motive $\Lambda(X )$  of any affinoid dagger smooth variety $X $ is compact. Since $\Lambda(X )$ is compact in $\catD(\Psh(\RigSm^{\dagger}\!/S))$  and $\Sing^{\B^{1\dagger}}$ commutes with direct sums, it suffices to prove that if $\{\mcF_i\}_{i\in I}$ is a family of $\et$-local complexes, then also $\bigoplus_i\mcF_i$ is $\et$-local. 
		If $I$ is finite, the claim follows from the isomorphisms $H_{-n}\RHom(X ,\bigoplus_i\mcF_i)\cong\bigoplus_i \HH^{n}(X ,\mcF_i)\cong\HH^n(X ,\bigoplus_i\mcF_i)$. A coproduct over an arbitrary family is a filtered colimit of finite coproducts, hence the claim follows from \cite[Proposition 4.5.62]{ayoub-th2}.
	\end{proof}
	
	The previous proof 
	can be generalized to the following result.
	
	\begin{prop}
		The category $\RigDA_{\et}^{\eff}(\hat{S},\Lambda)$ 
		is compactly generated (as a triangulated category with small sums) by motives $\Lambda(X)$ associated with  affinoid dagger varieties $X$    which are \'etale over some dagger poly-disc  $\B^{m}\times V$ for some open affinoid subspace $V\subset S$. 
	\end{prop}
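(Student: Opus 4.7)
The plan is to reduce to the same rational-local argument used in the dagger case (Proposition \ref{genRigDA}), but filtered through Corollary \ref{etaleoverball}, which is exactly the tool that promotes ``\'etale over a rigid poly-disc'' to ``\'etale over a dagger poly-disc'' after choosing a suitable dagger structure. Writing $\mathcal{G}$ for the class of rigid affinoids $\hat{X}$ which arise as the limit $\hat{X}=\widehat{X}$ of some affinoid dagger $X\in\AffSm^\dagger\!/S$ that is \'etale over a dagger poly-disc $\B^{m\dagger}\times V$ with $V\subset S$ affinoid rational, the goal is to show that $\{\Lambda(\hat{X})\}_{\hat{X}\in\mathcal{G}}$ is a set of compact generators of $\RigDA_{\et}^{\eff}(\hat{S},\Lambda)$.

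First I would invoke the rigid analogue of Proposition \ref{smooth}: any smooth affinoid rigid variety $\hat{Y}\ra\hat{S}$ admits, rational-locally on $\hat{Y}$, a factorization through an \'etale map to $\B^{m}\times \hat{V}$ for some affinoid rational open $\hat{V}\subset\hat{S}$ (this is exactly \cite[Lemma 1.1.51]{ayoub-rig} or the input used in the proof of Proposition \ref{smooth}). Fix such a rational cover $\{\hat{U}_i\ra\hat{Y}\}$ with each $\hat{U}_i$ \'etale over $\B^{m_i}\times \hat{V}_i$. Now I apply Corollary \ref{etaleoverball} to each $\hat{U}_i$: since $\hat{V}_i$ carries a dagger structure $V_i$ (as a rational open of $S$), Corollary \ref{etaleoverball} produces a dagger structure $U_i$ on $\hat{U}_i$ \'etale over $\B^{m_i\dagger}\times V_i$. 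Thus every $\hat{U}_i$ lies in $\mathcal{G}$ and the family $\{\hat{U}_i\ra\hat{Y}\}$ is a rational, hence \'etale, cover of $\hat{Y}$ by elements of $\mathcal{G}$.

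To conclude generation, I would then argue as in Proposition \ref{genRigDA}: since smooth affinoid rigid varieties over $\hat{S}$ already generate $\RigDA_{\et}^{\eff}(\hat{S},\Lambda)$ (by \cite{ayoub-rig}), and each such $\Lambda(\hat{Y})$ can be recovered, up to $\et$-weak equivalence, from the \v{C}ech hypercovering attached to $\{\hat{U}_i\ra\hat{Y}\}\subset\mathcal{G}$ (using that fibered products of objects of $\mathcal{G}$ are again analytic-locally in $\mathcal{G}$, by the same étale-over-polydisc argument applied to each piece), the functors $H_i\RHom(\Lambda(\hat{X}),-)$ for $\hat{X}\in\mathcal{G}$ and $i\in\Z$ detect quasi-isomorphisms between $(\et,\B^1)$-local objects. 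Compactness of each $\Lambda(\hat{X})$ for $\hat{X}\in\mathcal{G}$ is identical to the dagger case: $\hat{X}$ has finite \'etale cohomological dimension (by Corollary \ref{eqtop} and \cite[Corollary 1.2.19]{ayoub-rig} applied to its dagger lift, or directly in the rigid setting), $\Sing^{\B^1}$ commutes with small sums, and small sums of $\et$-local objects remain $\et$-local because finite sums commute with $\et$-hypercohomology and infinite sums are filtered colimits of finite ones, cf.\ \cite[Proposition 4.5.62]{ayoub-th2}.

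The only genuine obstacle is making sure the dagger structure produced by Corollary \ref{etaleoverball} really fits inside the dagger space $S$, i.e.\ that the rational piece $\hat{V}_i\subset\hat{S}$ is the limit of a rational affinoid $V_i\subset S$; this is immediate from Proposition \ref{dagcov1}(1), which gives an equivalence between rational subspaces of $S$ and of $\hat{S}$. Once this identification is in place, the rest is a formal translation of the dagger proof, the point being that Corollary \ref{etaleoverball} provides a canonical passage from rigid generators to dagger-underlied generators without altering their limits.
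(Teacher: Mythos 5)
Your argument is correct and is essentially the paper's own: the paper simply states that the proof of Proposition \ref{genRigDA} generalizes, and the generalization you spell out (rational-local \'etale presentations over poly-discs via \cite[Lemma 1.1.51]{ayoub-rig}, promotion to dagger structures via Corollary \ref{etaleoverball} and Proposition \ref{dagcov1}(1), then the same detection-of-quasi-isomorphisms and compactness arguments) is exactly the intended one. The only difference is cosmetic: you phrase generation via \v{C}ech hypercoverings where the paper appeals to Proposition \ref{loctop} (generators of the \'etale topos), but these are equivalent.
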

	
	\begin{cor}\label{genRigDArig}
		The category $\RigDA_{\et}^{\eff}(\hat{S},\Lambda)$ 
		is compactly generated by motives $\Lambda(\hat{X})$   associated with  affinoid varieties which admit a dagger structure.
	\end{cor}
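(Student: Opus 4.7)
The plan is to deduce this corollary directly from the preceding proposition combined with Corollary \ref{etaleoverball}. By the proposition, $\RigDA_{\et}^{\eff}(\hat{S},\Lambda)$ is compactly generated (as a triangulated category with small sums) by motives $\Lambda(\hat{X})$ of affinoid varieties $\hat{X}$ which are \'etale over a poly-disc $\B^{m}\times \hat{V}$ for some open affinoid $\hat{V}\subset\hat{S}$. It therefore suffices to observe that every such $\hat{X}$ already admits a dagger structure, so that the class of generators given by the proposition is contained in the class appearing in the statement of the corollary.

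To see that each generator $\hat{X}$ admits a dagger structure, I would simply invoke Corollary \ref{etaleoverball} applied to the affinoid dagger base $V$ corresponding to $\hat{V}$ (which exists, as any rational open subspace of $S$ is affinoid dagger by Proposition \ref{eqrational}). That corollary produces an explicit dagger algebra $R$ whose completion $\hat{R}$ is isomorphic to $\mcO(\hat{X})$, i.e.\ a dagger structure on $\hat{X}$.

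Having established that every object in the generating set of the preceding proposition lies in the (a priori larger) class singled out in the corollary, the statement follows immediately: the smallest triangulated subcategory with small sums containing the latter class contains the former, hence coincides with $\RigDA_{\et}^{\eff}(\hat{S},\Lambda)$. Compactness of each $\Lambda(\hat{X})$ with $\hat{X}$ affinoid is already known from the proof of Proposition \ref{genRigDA} (transcribed verbatim to the rigid setting), so nothing further is required. The main (and only) substantive ingredient is Corollary \ref{etaleoverball}; there is no real obstacle here, the corollary is just a convenient reformulation of the previous generation statement that forgets the explicit \'etale presentation.
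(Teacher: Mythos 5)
Your proof is correct and follows exactly the paper's (one-line) argument: the generators from the preceding proposition are affinoid varieties \'etale over poly-discs over affinoid opens, and Corollary \ref{etaleoverball} endows each of these with a dagger structure, so they lie in the larger generating class of the corollary. The details you supply (applying Corollary \ref{etaleoverball} over the affinoid open $V$, and noting compactness of affinoid motives) are exactly the intended ones.
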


	\begin{proof}
		This follows from the previous proposition and  Corollary \ref{etaleoverball}.
	\end{proof}

	The categories of motives have been introduced by imposing \'etale descent and homotopy invariance on  (co-)homological theories. If one wants to impose the extra condition that the Tate twist is invertible, they are not yet  enough. Nonetheless, there is a canonical way to do so (by means of the language of model categories) via the introduction of  \emph{spectra}. We refer to   \cite{hovey-sp} and \cite[Section 4.3]{ayoub-th2} for the details of this construction, that we simply apply to our dagger context.

	\begin{dfn}
		Consider the cokernel $L^\dagger$ in $\Psh(\RigSm^\dagger\!/S )$ of the map $\Lambda(S )\ra\Lambda(\mathbb{P}^{1\dagger}\times S )$ induced by the inclusion of the point $\infty$ in $\mathbb{P}^{1\dagger}$ and let $T^\dagger$ its shift $L^\dagger[-2]$ in $\Ch\Psh(\RigSm^\dagger\!/S )$.  It is a direct factor of a cofibrant object, hence cofibrant. We consider the category of (non-symmetric) spectra $\SSpect_{T^\dagger}\Ch_{\et,\B^{1\dagger}}\Psh(\RigSm^\dagger\!/S )$ and we denote by $\RigDA_{\et}^{\dagger}(S ,\Lambda)$ its homotopy category. 
	\end{dfn}
	
	\begin{rmk}
		The category $\RigDA_{\et}^{\dagger}(S ,\Lambda)$ is canonically equivalent to the homotopy category of symmetric spectra $\SSpect_{T^\dagger}^\Phi\Ch_{\et,\B^{1\dagger}}\Psh(\RigSm^\dagger\!/S )$ (see \cite[Proposition 4.3.47]{ayoub-th2}) and in particular inherits a monoidal structure, compatible with the canonical adjunction (see \cite[Lemma 4.3.24]{ayoub-th2})
		$$
		\adj{\LL\Sus_0}{\RigDA_{\et}^{\dagger\eff}(S ,\Lambda)}{\RigDA_{\et}^{\dagger}(S ,\Lambda)}{\RR\Ev_0}
		$$
	\end{rmk}

	The aim of the following part is to compare the categories of motives $\RigDA^{\dagger}$ and $\RigDA$ introduced above. We start by defining the canonical adjunction pair between them. The completion functor $\Spa R\mapsto\Spa\hat{R}$ defined on affinoid dagger spaces can be extended (by glueing) to a functor $l$ from dagger spaces to rigid analytic spaces (see \cite[Theorem 2.19]{gk-over}).
	
	\begin{prop}\label{Dff2}
		The completion functor $l\colon\RigSm^\dagger\!/S\rightarrow{\RigSm}\!/\hat{S}$ induces a Quillen adjunction
		\[
		\adj{l^*}{\Ch_{{\et},{\B}^{1\dagger }}\Psh(\RigSm^\dagger\!/S )}{\Ch_{{\et},{\B}^1}\Psh({\RigSm\!/\hat{S}})}{l_*}
		\]
		Moreover, the functor $\LL l^*$ is monoidal and  the functor $\RR l_*$ coincides with $l_*$.
	\end{prop}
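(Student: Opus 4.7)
The plan is to verify, in order, four claims: (i) that $l^{*}$ and $l_{*}$ form a Quillen adjunction at the level of projective model structures on complexes of presheaves, (ii) that this adjunction descends to the $(\et,\B^{1\dagger})$- and $(\et,\B^{1})$-localized structures, (iii) that $\LL l^{*}$ is strong monoidal, and (iv) that $l_{*}$ already computes $\RR l_{*}$ without fibrant replacement.

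For (i) and (ii) I would invoke the same mechanism as in the proof of Proposition~\ref{6f}. The completion functor $l$ is continuous: it commutes with fibered products, since the explicit construction of $\otimes^\dagger$ recalled after Definition~\ref{Odag} yields $l(X\times_{S}Y) = \hat{X}\times_{\hat{S}}\hat{Y}$, and it sends étale covers to étale covers by the very definition of covers on $\RigSm^\dagger\!/S$. Its Kan extension is therefore left adjoint to the restriction $l_{*}$, and is Quillen for the projective model structures by \cite[Proposition~4.4.61]{ayoub-th2}. To descend to the localizations, by the universal property of left Bousfield localization it suffices to check that $l^{*}$ sends the class $\mathcal{S}_{\et}\cup\mathcal{S}_{\B^{1\dagger}}$ to $(\et,\B^{1})$-weak equivalences: maps in $\mathcal{S}_{\et}$ are handled by preservation of étale covers, while the projection $\B^{1\dagger}\times X\to X$ is sent to $\B^{1}\times\hat{X}\to\hat{X}$, which is a $\B^{1}$-weak equivalence.

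For (iii), the key observation is that on representables $l^{*}(\Lambda(X)\otimes\Lambda(Y)) = \Lambda(\hat{X}\times_{\hat{S}}\hat{Y}) = l^{*}\Lambda(X)\otimes l^{*}\Lambda(Y)$, again by commutation of $l$ with fibered products. Since $l^{*}$ commutes with colimits, it is strong monoidal on all presheaves; and since the projective model structure on complexes of presheaves is a monoidal model category (tensor products of cofibrant complexes being cofibrant), monoidality passes to the derived functor in the standard way.

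Step (iv) is the most delicate and is where I expect the main technical work. I would prove it by showing that $l_{*}$ preserves $(\et,\B^{1})$-local objects. Given such a $\mcG$, the presheaf $l_{*}\mcG$ sends $\B^{1\dagger}\times X\to X$ to $\mcG(\hat{X})\to\mcG(\B^{1}\times\hat{X})$, which is a quasi-isomorphism by $\B^{1}$-locality. For étale descent I would use the hypercover characterization of $\et$-local objects employed in the proof of Proposition~\ref{sing}: for any étale hypercovering $\mcU_{\bullet}\to X$ in $\RigSm^\dagger\!/S$, its image $l\mcU_{\bullet}\to\hat{X}$ is an étale hypercovering of rigid spaces, so the $\et$-locality of $\mcG$ provides the quasi-isomorphism $(l_{*}\mcG)(X)=\mcG(\hat{X})\xrightarrow{\sim}\Tot\mcG(l\mcU_{\bullet})=\Tot(l_{*}\mcG)(\mcU_{\bullet})$. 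Since weak equivalences between $(\et,\B^{1})$-fibrant objects are pointwise quasi-isomorphisms and $l_{*}$ trivially preserves pointwise quasi-isomorphisms, $l_{*}$ preserves weak equivalences between fibrant objects, giving $\RR l_{*}=l_{*}$ on homotopy categories.
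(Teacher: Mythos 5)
Your steps (i)--(iii) are essentially the paper's own argument: the Quillen adjunction and the monoidality of $\LL l^*$ follow formally from the continuity of $l$, the formula $l(\B^{1\dagger})\cong\B^1$ and $l(X\times_S Y)\cong \hat{X}\times_{\hat{S}}\hat{Y}$, via \cite[Proposition 4.4.61]{ayoub-th2} exactly as in Proposition \ref{6f}. The problem is step (iv), which is where the real content of the proposition lies.

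What you actually prove in (iv) is that $l_*$ sends $(\et,\B^1)$-local objects to $(\et,\B^{1\dagger})$-local objects, and that it preserves weak equivalences between fibrant objects. The second assertion is Ken Brown's lemma and holds for \emph{every} right Quillen functor, so it cannot single out the special property claimed here; and the first does not yield $\RR l_*=l_*$ either. To identify $\RR l_*\mcG=l_*(R\mcG)$ with $l_*\mcG$ for an \emph{arbitrary} $\mcG$ you must know that $l_*$ applied to the replacement map $\mcG\to R\mcG$ --- an $(\et,\B^1)$-weak equivalence between non-fibrant objects --- is still a weak equivalence on the dagger side; that is, you need $l_*$ to send \emph{all} $(\et,\B^1)$-weak equivalences to $(\et,\B^{1\dagger})$-weak equivalences. (This full strength is also what is used later: in the proof of Theorem \ref{main} one applies $l_*$ directly to the non-local presheaf $\Lambda(\hat{Y})$ and treats the result as $\RR l_*\Lambda(\hat{Y})$.) Note, moreover, that your \'etale-descent check only uses the easy direction, namely that $l$ turns covers of $X$ into covers of $\hat{X}$; preservation of $\et$-weak equivalences forces the opposite comparison of the two topologies, i.e.\ Corollary \ref{dagcov}: every \'etale cover of $\hat{X}$ can be refined by one induced by an \'etale cover of the dagger space $X$. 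This is exactly what the paper uses to show that $l_*$ commutes with \'etale sheafification, hence preserves $\et$-weak equivalences; the $\B^1$-weak equivalences are then handled by the identity $l_*\Sing^{\B^1}\cong\Sing^{\B^{1\dagger}}l_*$ together with Proposition \ref{sing}. Without some version of these two inputs your step (iv) does not close, so the argument as written has a genuine gap, even though the locality statement you prove is correct and not without interest.
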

	
	\begin{proof}
		The existence of the Quillen adjunction follows formally from the continuity of $l$ and the formula $l(\B^{1\dagger})\cong\B^1$. Also, the fact that $\LL l^*$ is monoidal follows from the formula $l^*(X \times_{S }Y )\cong l^*(X )\times_{\hat{S}}l^*(Y )$. We are left to prove that $\RR l_*\cong l_*$.
		
		By its very definition, $l_*$ preserves quasi-isomorphisms of complexes of presheaves. 
		We  claim that $l_*$ preserves also $\et$-weak equivalences. Fix any affinoid dagger space $X$ and any presheaf $\mcF$ on $\AffSm$. Let $R(X )$ resp. $R(\hat{X})$ be the class of covering families of $X $ resp. $\hat{X}$. By Corollary \ref{dagcov} we know that any covering of $\hat{X} $ can be refined into one  coming from a covering of $X $. This proves that the canonical map from $(l_*\mcF)^+(X)\colonequals \varinjlim_{\mcU \in R(X )}H^0(\mcU ,l_*\mcF)$ to $\mcF^+(\hat{X})\colonequals \varinjlim_{\mcU\in R(\hat{X})}H^0(\mcU,\mcF)$ is invertible. By the explicit construction of the sheafification functor as $\mcF\mapsto\mcF^{++}$ (see e.g. \cite[Tag 00W1]{stacks-project}) we then deduce that  $l_*$ commutes with the \'etale sheafification functor, and hence it also preserves  $\et$-weak equivalences as claimed.
		
		We also  remark that for any complex $\mcF$ and any dagger variety $X $ we obtain 
		$$
		\begin{aligned}
		(l_*\Sing^{\B^1}\mcF)(X )&=\Tot C_\bullet{\Hom}(\Lambda(\hat{X}\times\square^\bullet),\mcF)=\Tot C_\bullet{\Hom}(l_*\Lambda(X \times\square^{\dagger\bullet}),\mcF)\\
		&=\Tot C_\bullet{\Hom}(\Lambda(X \times\square^{\dagger\bullet}),l_*\mcF)=(\Sing^{\B^{1\dagger}}l_*\mcF)(X )
		\end{aligned}
		$$
		so that $l_*\Sing^{\B^1}\cong\Sing^{\B^{1\dagger}}l_*$. We then deduce by Proposition \ref{sing} that $l_*$ maps $\B^1$-weak equivalences to $\B^{1\dagger}$-weak equivalences, as wanted.
	\end{proof}

	\begin{cor}\label{Dff3}
		The natural functor $l\colon\RigSm^\dagger\rightarrow{\RigSm}$ induces a Quillen adjunction
		\[
		\adj{l^*}{\SSpect_{T^\dagger}\Ch_{{\et},{\B}^{1\dagger }}\Psh(\RigSm^\dagger\!/S )}{\SSpect_T\Ch_{{\et},{\B}^1}\Psh({\RigSm\!/\hat{S}})}{l_*}
		\]
		Moreover, the functor $\LL l^*$ is monoidal and  the functor $\RR l_*$ coincides with $l_*$.
	\end{cor}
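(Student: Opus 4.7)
The corollary is the spectrum-level upgrade of Proposition \ref{Dff2}. My plan is to treat the three assertions separately (Quillen adjunction, monoidality of $\LL l^*$, and $\RR l_* = l_*$), reducing each to its effective counterpart by means of the formal properties of the construction of (non-symmetric) $T^\dagger$-spectra reviewed in \cite[Section 4.3]{ayoub-th2}.

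The essential input beyond Proposition \ref{Dff2} is the compatibility of $l^*$ with the stabilizing objects. Since $l$ sends $\mathbb{P}^{1\dagger}$ to $\mathbb{P}^1$ and $l^*$ commutes with colimits, one obtains $l^*L^\dagger \cong L$ in $\Psh(\RigSm/\hat{S})$, hence $l^*T^\dagger \cong T$. Combined with the effective Quillen adjunction of Proposition \ref{Dff2}, this is exactly the data needed by the general levelwise construction of a Quillen adjunction between $T^\dagger$-spectra and $T$-spectra. The monoidality of $\LL l^*$ is an analogous formal consequence: passing to symmetric spectra via the equivalence \cite[Proposition 4.3.47]{ayoub-th2} and using again the identification $l^*T^\dagger \cong T$, the effective monoidal structure of $\LL l^*$ promotes to a monoidal structure on the stabilization.

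The main content, and the step I expect to require the most care, is the identification $\RR l_* = l_*$. The proof of Proposition \ref{Dff2} shows that $l_*$ preserves $(\et, \B^1)$-weak equivalences on the effective level, hence applied componentwise to spectra it preserves levelwise weak equivalences. To upgrade this to preservation of stable weak equivalences, the key point is that $l_*$ carries $\Omega$-spectra to $\Omega$-spectra. This follows from the fact that $\LL l^*$ is monoidal and sends $T^\dagger$ to $T$, which implies that $l_*$ intertwines the internal-Hom functors $\uhom(T, -)$ and $\uhom(T^\dagger, -)$ that govern the $\Omega$-spectrum condition. Once this compatibility is verified, the explicit stable fibrant replacement by iterated $\Omega$-loopings, together with the already-established levelwise preservation, shows that $l_*$ sends a stable fibrant replacement to a stable fibrant replacement and hence that $\RR l_*$ coincides with $l_*$. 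I would model this final verification on the analogous argument in the perfectoid setting of \cite{vezz-fw}.
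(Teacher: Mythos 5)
Your proposal is correct and follows essentially the same route as the paper, which deduces all three assertions formally from Proposition \ref{Dff2} together with the monoidality of $\LL l^*$, the identification $l^*T^\dagger\cong T$, and Hovey's result on prolonging Quillen adjunctions to categories of spectra (\cite[Proposition 5.3]{hovey-sp}). The only difference is that where you unwind the identification $\RR l_*=l_*$ by hand (preservation of $\Omega$-spectra via $l_*\uhom(T,-)\cong\uhom(T^\dagger,l_*(-))$ and the explicit stabilization by iterated loopings, as in \cite{vezz-fw}), the paper absorbs this step into the citation of Hovey; both arguments are fine.
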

	
	\begin{proof}
		Since $\LL l^*$ is monoidal and $ l^*T^\dagger\cong T$ the result follows formally from \cite[Proposition 5.3]{hovey-sp} and Proposition \ref{Dff2}.
	\end{proof}
	
	We will see that the following technical proposition implies the the fully faithfulness of the functor $\LL l^*$. It relies heavily on the approximation result (Proposition \ref{sollevacocu}) obtained in the previous section.
	
	\begin{prop}\label{qiso}
		Suppose that $S $ is affinoid. Let $X $  be an  affinoid dagger algebra smooth over $S $ with limit $\hat{X}$ and   $Y $ 
		be a dagger algebra, \'etale over a poly-disc  $\B^{m\dagger}$.  The canonical map 
		$$ (\Sing^{\B^{1\dagger}}\Lambda(Y ))(X )\ra (\Sing^{\B^{1}}\Lambda(\hat{Y}))(\hat{X})$$
		is a quasi-isomorphism.
	\end{prop}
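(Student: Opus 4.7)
The plan is to unwind both simple complexes using Definition~\ref{cocu}: write $C^\dagger_n := \bigcap_{r=1}^n \ker d^*_{r,0}$ inside $\Lambda\Hom_S(X\times\B^{n\dagger},Y)$ with differential $\partial = \sum_{r=1}^n (-1)^r d^*_{r,1}$, and define $C^{\rm rig}_n$ analogously with $\hat X$, $\hat Y$ and $\B^n$; the statement then says that the completion-induced chain map $\iota\colon C^\dagger_\bullet\to C^{\rm rig}_\bullet$ is a quasi-isomorphism. The geometric content is entirely carried by Proposition~\ref{sollevacocu}, which I would use to manufacture cubical chain homotopies between $\id$ and a section of $\iota$.

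For surjectivity on $H_*$, given a cycle $z=\sum_k a_k[f_k]\in C^{\rm rig}_n$, I would apply Proposition~\ref{sollevacocu} to the finite family $\{f_k\}$ to obtain rigid homotopies $H_k\colon \hat X\times\B^n\times\B^1\to\hat Y$. Letting $p\colon \B^{n+1}\to\B^n$ be the projection forgetting the extra coordinate, I would form
$$w:=\sum_k a_k\bigl([H_k]-[p^*f_k]\bigr).$$
Clause~(2) of Proposition~\ref{sollevacocu}, together with the cycle condition $d^*_{r,0}z=0$, ensures that $w\in C^{\rm rig}_{n+1}$; a direct expansion of $\partial w$, isolating the face coming from the homotopy coordinate, gives
$$\partial w \;=\; (-1)^{n+1}\bigl(\iota(z^\dagger)-z\bigr)+\sum_{r=1}^{n}(-1)^{r}R_r,$$
where $z^\dagger:=\sum_k a_k[i_1^*H_k]\in C^\dagger_n$ by clause~(1) of the proposition, and each $R_r$ is a chain of exactly the same form as $w$ but associated with the reduced family $\{f_k\circ d_{r,1}\}$ one cube-dimension lower. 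A descending induction on $n$, in which clause~(3) is invoked to guarantee that face-restrictions already lying in the image of $\iota$ are preserved untouched throughout the construction, then shows that $[z]=[\iota(z^\dagger)]$ in $H_n(C^{\rm rig}_\bullet)$.

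For injectivity on $H_*$, suppose $z\in C^\dagger_n$ is a cycle with $\iota(z)=\partial u$ for some $u=\sum_j b_j[g_j]\in C^{\rm rig}_{n+1}$. The boundary identity expresses a prescribed combination of the faces $g_j\circ d_{1,1}$ as the already-dagger chain $z$; this is precisely the configuration addressed by clause~(3) of Proposition~\ref{sollevacocu}, applied to $\{g_j\}$. I would use it to produce a dagger approximation $\tilde u\in C^\dagger_{n+1}$ which fixes those $d_{1,1}$-faces \emph{exactly} rather than perturbing them, so that $\partial\tilde u=z$ holds on the nose in $C^\dagger_\bullet$, witnessing $[z]=0$ already on the dagger side.

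The main obstacle I expect is bookkeeping rather than any fresh geometric input: verifying that the manufactured cubical chains land in the normalized subcomplexes $C_\bullet$ (which is exactly what clause~(2) of Proposition~\ref{sollevacocu} was arranged to secure) and tracking the descending induction on cube-dimension carefully enough that clause~(3) is always available to terminate the process. The crux of the injectivity step — extracting a genuine dagger boundary of $z$ rather than a mere approximation — depends on this rigidity of clause~(3), and the interplay of the two clauses with the signs appearing in $\partial$ is the only non-formal part of the argument.
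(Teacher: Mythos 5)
Your use of Proposition~\ref{sollevacocu} as the sole geometric input is exactly right, but there is a genuine gap: you work throughout in the simple complex $C_\bullet=\bigcap_r\ker d^*_{r,0}$ and never reduce to the normalized complex $N_\bullet$, and in $C_\bullet$ the cycle and boundary conditions do not control the faces your construction needs. In the surjectivity step, a cycle $z\in C^{\rm rig}_n$ only satisfies $\sum_r(-1)^r d^*_{r,1}z=0$, not $d^*_{r,1}z=0$ for each $r$, so the correction terms $R_r$ in your identity $\partial w=(-1)^{n+1}(\iota(z^\dagger)-z)+\sum_r(-1)^rR_r$ do not vanish, and clause~(2) of Proposition~\ref{sollevacocu} only compares homotopy faces for the \emph{same} face index $r$, so the $R_r$ do not assemble into a prism operator applied to $\partial z$. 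Your proposed descending induction only computes $\partial R_r$ (it shows each $R_r$ has a boundary of the same shape one dimension lower); it never shows that $\sum_r(-1)^rR_r$ is itself a boundary, which is what $[z]=[\iota(z^\dagger)]$ requires. In the injectivity step the problem is sharper: in $C_\bullet$ the relation $\iota(z)=\partial u$ spreads $z$ over all the faces $g_j\circ d_{r,1}$, $r=1,\dots,n+1$, whereas clause~(3) of Proposition~\ref{sollevacocu} only rigidifies the $d_{1,1}$-face; the dagger chain $\tilde u$ you produce has the correct $d_{1,1}$-faces but its faces $d^*_{r,1}\tilde u$ for $r\geq 2$ are merely dagger approximations of those of $u$, so ``$\partial\tilde u=z$ on the nose'' does not follow. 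The configuration you describe (``a prescribed combination of the $d_{1,1}$-faces equals $z$, all other faces vanish'') is the normalized one, not the one your setup provides.

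The missing move is the one the paper makes at the outset: since $N_\bullet\hookrightarrow C_\bullet$ is a (split) quasi-isomorphism (Definition~\ref{cocu}, after \cite{ayoub-h2}), it suffices to prove the statement for the normalized complexes, where a cycle kills \emph{every} face $d_{r,\epsilon}$ and the differential is just $-d^*_{1,1}$. With that reduction your outline essentially becomes the paper's proof: for surjectivity, clause~(2) makes all your $R_r$ vanish identically and the prism identity is exactly the homotopy lemma the paper cites (\cite[Lemma~3.14]{vezz-fw}); for injectivity, the vanishing of the faces $d_{r,0}$ and $d_{r,1}$ ($r\geq2$) of $u$ transfers to $\tilde u$ by clause~(2), and clause~(3) (combined with clause~(2) to dispose of groups of maps whose common $d_{1,1}$-face cancels and hence need not individually be dagger) gives $d^*_{1,1}\tilde u=z$ exactly, whence $z$ is already a boundary on the dagger side. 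As written, however, the argument in the simple complex does not close.
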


	\begin{proof}
		We need to prove that the natural map
		\[
		\phi\colon N_\bullet\Lambda\Hom(X \times\square^\dagger,Y )\ra N_\bullet\Lambda\Hom(\hat{X}\times\square,\hat{Y})
		\]
		defines bijections on homology groups. 
		
		We start by proving  surjectivity. Suppose that $\beta\in\Lambda\Hom(\hat{X}\times\square^n,\hat{Y})$ defines a cycle in $N_n$ that is,  $\beta\circ d_{r,\epsilon}=0$ for $1\leq r\leq n$ and $\epsilon\in\{0,1\}$. This means that $\beta=\sum \lambda_k f_k$ with $\lambda_k\in\Lambda$, $f_k\in\Hom(\hat{X}\times\square^n,\hat{Y})$ and $\sum \lambda_k f_k\circ d_{r,\epsilon}=0$. This amounts to say that for every $k,r,\epsilon$ the sum $\sum\lambda_{k'}$ over the indices $k'$ such that $f_{k'}\circ d_{r,\epsilon}=f_{k}\circ d_{r,\epsilon}$ is zero. By Proposition \ref{sollevacocu}, we can find   maps $H_k\in\Hom(\hat{X}\times\square^n\times\B^1,\hat{Y})$ such that $i_0^*H=f_k$, $i_1^*H=\phi(\tilde{f}_{k})$ with $\tilde{f}_k\in\Hom(X \times\square^{n\dagger},Y )$ and $H_k\circ d_{r,\epsilon}=H_{k'}\circ d_{r,\epsilon}$ whenever ${f}_k\circ d_{r,\epsilon}={f}_{k'}\circ d_{r,\epsilon}$. 
		We denote by $H$ the cycle $\sum\lambda_k H_k\in\Lambda\Hom(\hat{X}\times\square^n\times\B^1,\hat{Y})$. 
		By \cite[Lemma 3.14]{vezz-fw}  we conclude that $i_1^*H$ and $i_0^*H$ define the same homology class, and therefore  $\beta$ defines the same class as $i_1^*H$ which is the image of a class in $\Lambda\Hom(X \times\square^{n\dagger},Y )$ as wanted. 
		
		We now turn to the injectivity.  Consider an element $\alpha\in\Lambda\Hom(X \times\square^{n\dagger},Y )$ such that $\alpha\circ d_{r,\epsilon}=0$ for all $r,\epsilon$ and suppose there exists an element $\beta=\sum\lambda_if_i\in\Lambda\Hom(\hat{X}\times\square^{n+1},\hat{Y})$ such that $\beta\circ d_{r,0}=0$ for $1\leq r\leq n+1$, $\beta\circ d_{r,1}=0$ for $2\leq r\leq n+1$ and $\beta\circ d_{1,1}=\phi(\alpha)$. Again, by Proposition \ref{sollevacocu}, we can find  maps $H_k\in\Hom(\hat{X}\times\square^{n+1}\times\B^1,\hat{Y})$ such that $H\colonequals\sum\lambda_k H_k$ satisfies $i_1^*H=\phi(\gamma)$ for some $\gamma\in\Lambda\Hom(X \times\square^{(n+1)\dagger},Y )$, $H\circ d_{r,0}=0$ for $1\leq r\leq n+1$, $H\circ d_{r,1}=0$ for $2\leq r\leq n+1$ and $H\circ d_{1,1}$ is constant on $\B^1$ and coincides with $\phi(\alpha)$. We conclude that $\gamma\in N_n$ and $d\gamma=\alpha$. In particular, $\alpha=0$ in the homology group, as wanted.
	\end{proof}

	We are now ready to prove the main result of this section.
	
	\begin{thm}\label{main}
		The  functors $ (\LL l^*,\RR l_*)$ define  triangulated, monoidal %
		equivalences
		$$
		\begin{aligned}
		\RigDA^{\dagger\eff}_{\et}(S ,\Lambda)&\stackrel{\sim}{\ra}{\RigDA^{\eff}_{\et}(\hat{S},\Lambda)}\\
		\RigDA^{\dagger}_{\et}(S ,\Lambda)&\stackrel{\sim}{\ra}{\RigDA_{\et}(\hat{S},\Lambda)}
		\end{aligned}
		$$
	\end{thm}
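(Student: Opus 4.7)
The plan is to establish the effective equivalence first and then deduce the stable one formally. For the effective case, I argue as follows. By Proposition \ref{genRigDA} and the proposition immediately following it, both $\RigDA^{\dagger\eff}_{\et}(S,\Lambda)$ and $\RigDA^{\eff}_{\et}(\hat{S},\Lambda)$ are compactly generated, the first by motives $\Lambda(X)$ of affinoid dagger varieties étale over some $\B^{m\dagger}\times V$ with $V\subset S$ an open affinoid, the second by analogous motives $\Lambda(\hat{X}')$ of rigid affinoids étale over $\B^{m}\times \hat{V}$. Every such $\hat{X}'$ admits a dagger structure by Corollary \ref{etaleoverball}, and $\LL l^{*}\Lambda(X)=\Lambda(\hat{X})$ by construction, so $\LL l^{*}$ carries the chosen set of compact generators on the dagger side bijectively (up to isomorphism) onto the chosen set of compact generators on the rigid side. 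Essential surjectivity as a triangulated category with small sums is then automatic once fully faithfulness on this set of generators is established.

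Fix generators $X$ and $Y$ of the prescribed form. By Proposition \ref{sing}(2), the complex $\Sing^{\B^{1\dagger}}C^{\et}\Lambda(Y)$ is an $(\et,\B^{1\dagger})$-fibrant replacement of $\Lambda(Y)$, and the analogous statement holds on the rigid side. Hence
\[
\Hom_{\RigDA^{\dagger\eff}}(\Lambda(X),\Lambda(Y)[i])\cong H^{-i}\bigl((\Sing^{\B^{1\dagger}}C^{\et}\Lambda(Y))(X)\bigr),
\]
with the analogous formula for $\Hom_{\RigDA^{\eff}}(\Lambda(\hat{X}),\Lambda(\hat{Y})[i])$, and the comparison is implemented by $\LL l^{*}$. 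The key input is Proposition \ref{qiso}, which provides a quasi-isomorphism
\[
(\Sing^{\B^{1\dagger}}\Lambda(Y))(X)\xrightarrow{\sim}(\Sing^{\B^{1}}\Lambda(\hat{Y}))(\hat{X})
\]
for any such pair $X,Y$. Reading this as the map of presheaves on $\RigSm^{\dagger}\!/S$ induced by the adjunction unit $\Lambda(Y)\to l_{*}\Lambda(\hat{Y})$, and using the identity $l_{*}\Sing^{\B^{1}}=\Sing^{\B^{1\dagger}}l_{*}$ from the proof of Proposition \ref{Dff2}, we see that $\Sing^{\B^{1\dagger}}\Lambda(Y)\to l_{*}\Sing^{\B^{1}}\Lambda(\hat{Y})$ is a sectionwise quasi-isomorphism on a set of generators of the étale topos of $\RigSm^{\dagger}\!/S$, hence an $\et$-weak equivalence.

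Applying the étale-local replacement $C^{\et}$ and invoking Corollary \ref{eqtop} (which implies that for any complex $\mcG$ on $\RigSm\!/\hat{S}$ one has $\HH^{*}_{\et}(X,l_{*}\mcG)\cong\HH^{*}_{\et}(\hat{X},\mcG)$), one obtains a quasi-isomorphism
\[
(\Sing^{\B^{1\dagger}}C^{\et}\Lambda(Y))(X)\xrightarrow{\sim}(\Sing^{\B^{1}}C^{\et}\Lambda(\hat{Y}))(\hat{X}).
\]
Taking $H^{-i}$ yields the required fully faithfulness of $\LL l^{*}$ on compact generators, and hence the effective equivalence. For the stable statement one observes that $l^{*}T^{\dagger}\cong T$, since $l(\mathbb{P}^{1\dagger}\times S)=\mathbb{P}^{1}\times\hat{S}$. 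The Quillen equivalence of Corollary \ref{Dff3} on the effective level then extends to the spectra by the standard machinery of \cite{hovey-sp}, giving the second equivalence.

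The main obstacle will be the second step: carefully combining Proposition \ref{qiso}, which provides only a pointwise quasi-isomorphism on a generating family \emph{before} étale sheafification, with the topos equivalence of Corollary \ref{eqtop}, to show that the comparison persists after applying $C^{\et}$. In other words, one must verify that the natural map $\Lambda(Y)\to l_{*}\Lambda(\hat{Y})$, despite failing to be a quasi-isomorphism of presheaves, becomes an $(\et,\B^{1\dagger})$-weak equivalence once stabilized by $\Sing^{\B^{1\dagger}}$, and that this can be detected on the compact generators of Proposition \ref{genRigDA}. This is essentially a bookkeeping exercise, but it is the conceptual bridge from the approximation result of Section \ref{approx} to the motivic equivalence.
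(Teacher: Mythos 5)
Your strategy is essentially the paper's (compact generators, Proposition \ref{qiso} as the key input, Hovey's theorem for the stable case), but there is a genuine gap at the central step. You apply Proposition \ref{qiso} to ``any such pair $X,Y$'' of compact generators over the general base $S$, yet that proposition (and Proposition \ref{sollevacocu} behind it) requires $S$ to be \emph{affinoid} and $Y$ to be \'etale over a poly-disc \emph{over $S$ itself}. By Proposition \ref{genRigDA}, when $S$ is not affinoid the generators are only \'etale over $\B^{m\dagger}\times V$ for affinoid opens $V\subset S$, possibly different opens for $X$ and for $Y$, so the quasi-isomorphism you invoke is simply not available, and the Hom-group comparison between generators is not established. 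This is exactly why the paper does not argue via Hom-sets between pairs of generators: it shows instead that the unit $\id\Rightarrow\RR l_*\LL l^*$ is invertible on each $\Lambda(Y)$ (and then concludes by \cite[Lemma 1.3.32]{ayoub-rig}), a statement about one object at a time which may be checked after applying the conservative family $\{\LL j_i^*\}$ attached to an affinoid cover $\{j_i\colon W_i\ra S\}$; since $j_i^*$ commutes with $\LL l^*$ and with $l_*$ (the latter because $j_{i\sharp}$ commutes with $l^*$), this legitimately reduces to an affinoid base with $Y$ \'etale over a poly-disc over it, where Proposition \ref{qiso} applies. Your proposal needs this reduction (or a strengthening of Proposition \ref{qiso}) to go through; without it the full faithfulness on generators is unproved.

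The secondary point, the $C^{\et}$ ``bookkeeping'' you defer, is avoidable rather than unavoidable: since $\RR l_*=l_*$ preserves all weak equivalences (Proposition \ref{Dff2}), one never has to compare \'etale-fibrant replacements. It suffices that $\Sing^{\B^{1\dagger}}\Lambda(Y)\ra l_*\Sing^{\B^{1}}\Lambda(\hat{Y})$ be a sectionwise quasi-isomorphism on affinoids, hence an $(\et,\B^{1\dagger})$-weak equivalence, because the source is $\B^{1\dagger}$-weakly equivalent to $\Lambda(Y)$ and the target to $l_*\Lambda(\hat{Y})\cong\RR l_*\LL l^*\Lambda(Y)$; this gives the invertibility of the unit directly. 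Your route through $C^{\et}$ can be completed (both $\Sing^{\B^{1\dagger}}C^{\et}\Lambda(Y)$ and $l_*\Sing^{\B^{1}}C^{\et}\Lambda(\hat{Y})$ are $(\et,\B^{1\dagger})$-local, and a weak equivalence between local objects is a sectionwise quasi-isomorphism), but as written it is only announced as an obstacle, not carried out. The stable deduction is fine and matches the paper, except that the effective-level Quillen equivalence is the content of Proposition \ref{Dff2}, not of Corollary \ref{Dff3}.
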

	
	\begin{proof}
		We start by proving the result on the effective categories.		We already showed in Remark \ref{genRigDArig} %
		that  the image by $\LL  l^*$ of the set of compact generators $\Lambda(Y )[i]$ as $Y $ varies in $\RigSm\Aff^\dagger\!/S$ and $i\in\Z$ (see Proposition \ref{genRigDA}) is a set of compact generators of $\RigDA^{\eff}_{\et}(\hat{S},\Lambda)$. By \cite[Lemma 1.3.32]{ayoub-rig} it then suffices to prove that the natural transformation $\id\Rightarrow\RR l_*\LL l^*$ is invertible. %

		By Proposition \ref{Dff2} the functor $\RR l_*$ is $l_*$. 
		Fix now a  dagger space $Y $ smooth over $S $. 
		We claim  that the motive $    l_*\LL  l^*\Lambda(Y )$ is isomorphic to $\Lambda(Y )$. By Proposition we can find a cover  $\{j_i\colon W_i \ra S \}$ by open affinoid subspaces of $S $ such that each $Y_i \colonequals Y \times_{S }W $ is \'etale over some dagger poly-disc $\B^{m\dagger}_{\hat{W}_i}$. By the dagger, effective version of \cite[Lemma 3.4]{ayoub-etale} 
		we obtain that the collection of functors $\{\LL j_i^*\}_i$ is conservative. In particular, we can prove that $\LL j_i^*\Lambda(Y )\cong \LL j_i^* l_*\LL  l^*\Lambda(Y )$. The functor $\LL j_i^*$ obviously commutes with $\LL l^*$. We claim that it also commutes with $l_*$. 
		We can  alternatively prove that $j_{i\sharp}$ commutes with $l^*$ (see Proposition \ref{6f}) and this is also straightforward. We are then left to prove that the canonical map 
		$$\LL j_i^*\Lambda(Y )\cong\Lambda(Y_i )\ra l_*\Lambda(\hat{Y}_i)\cong  l_*\LL  l^*\Lambda(Y _i)\cong \LL j_i^*l_*\LL  l^*\Lambda (Y )$$
		is invertible. 
		Therefore, we assume from now on that  the base dagger variety $S $ is affinoid, and that $Y $ is \'etale over a poly-disc. 	We will equivalently show that  $  l_*\Sing^{\B^{1}}\Lambda(\hat{Y})\cong \Sing^{\B^{1\dagger}}(Y )$. It is enough to show that for any affinoid dagger space   $X $ smooth over $S $ and with limit  $\hat{X}$, the canonical map
		$$(\Sing^{\B^{1\dagger}}\Lambda(Y ))(X )\ra (\Sing^{\B^{1}}\Lambda(\hat{Y}))(\hat{X})$$
		is a quasi-isomorphism, and this is true by means of Proposition \ref{qiso}.
		
		We finally conclude that the triangulated subcategory of $\RigDA^{\dagger\eff}_{\et}(S,\Lambda )$ of those objects $\mcF$ such that the canonical map  $\mcF\ra  l_*\LL  l^*\mcF$ is invertible contains $\Lambda(Y )[i]$ for any $Y $ smooth over $S $. By Proposition \ref{genRigDA} we deduce that this subcategory coincides with the whole $\RigDA^{\dagger\eff}_{\et}(S ,\Lambda)$ and therefore $  l_*\LL  l^*\cong\id$ as wanted. 
		
		We now turn to the assertion for the stable categories. From what we proved above we conclude that the Quillen adjunction of Proposition \ref{Dff2} is a Quillen equivalences. By means of \cite[Theorem 5.5]{hovey-sp} the same is true also for the adjunction of Corollary \ref{Dff3}, hence the claim. 
	\end{proof}

	\section{The Monsky-Washnitzer Realization Functor}
	
	It is possible to use the equivalence  obtained above to define some realization functors for algebraic motives, as well to give an explicit description of the motives representing   rigid cohomology or the overconvergent de Rham cohomology. In this respect, motives will be used as a convenient formalism allowing choices modulo homotopy, localizations and reductions to special cases. This language will provide extremely concise proofs of the functoriality and the finite-dimensionality for both the overconvergent de Rham cohomology and rigid cohomology.
	
	\begin{assu}
		From now on, we suppose that $K$ is a complete valued filed  of mixed characteristic $(0,p)$ with a valuation of rank $1$ (we do not suppose that the valuation is discrete). We recall that we denote by $K^\circ$ its ring of integers and by $k$ its residue field of characteristic $p$. 
	\end{assu}

	Fix a formal scheme $\mathcal{X}$ of topological finite type over $K^\circ$. We can adapt Definition \ref{motives} to the setting of algebraic varieties and define the category  $\DA^{\eff}_{\et}(\mathcal{X}_\sigma,\Lambda)$ of \'etale motives over $\mathcal{X}_\sigma$ (see e.g. \cite[Page 20]{ayoub-etale}). Analogously, we can define the category $\FormDA_{\et}^{\eff}(\mathcal{X},\Lambda)$  of \'etale motives of formal varieties over $\mathcal{X}$. If we let $\mathfrak{M}$ be the model category $\Ch(\Lambda\Mod)$, these constructions are denoted by $\SH^{\eff}_{\mathfrak{M}}(\mathcal{X}_\sigma)$ and $\FSH^{\eff}_\mathfrak{M}(\mathcal{X})$   in \cite[Definition 1.4.12]{ayoub-rig} respectively, with the only difference that we are considering the (finer) \'etale topology rather than the Nisnevich site. They are both monoidal categories with respect to the tensor product inherited by the direct product of varieties, and they also admit stable versions $\DA_{\et}(\mathcal{X}_\sigma,\Lambda)$ and $\FormDA_{\et}(\mathcal{X},\Lambda)$ obtained by inverting the Tate twists.  By  \cite[Corollaries 1.4.24 and 1.4.29]{ayoub-rig} the special fiber functor induces  (triangulated, monoidal)  equivalences:
	$$
	\begin{aligned}
	\LL(\cdot)_\sigma^*\colon \FormDA_{\et}^{\eff}(\mathcal{X},\Lambda)&\cong \DA_{\et}^{\eff}(\mathcal{X}_\sigma,\Lambda)\colon \RR(\cdot)_{\sigma*}\\
	\LL(\cdot)_\sigma^*\colon \FormDA_{\et}(\mathcal{X},\Lambda)&\cong \DA_{\et}(\mathcal{X}_\sigma,\Lambda)\colon \RR(\cdot)_{\sigma*}
	\end{aligned}
	$$
	On the other hand, the generic fiber functor induces  Quillen adjunctions:
	$$
	\begin{aligned}
	{\LL(\cdot)_\eta^*}\colon{\FormDA_{\et}^{\eff}(\mathcal{X},\Lambda)}\leftrightarrows{ \RigDA_{\et}^{\eff}(\mathcal{X}_\eta,\Lambda)}\colon{ \RR(\cdot)_{\eta*}}\\
	{\LL(\cdot)_\eta^*}\colon{\FormDA_{\et} (\mathcal{X},\Lambda)}\leftrightarrows{ \RigDA_{\et} (\mathcal{X}_\eta,\Lambda)}\colon{ \RR(\cdot)_{\eta*}}.
	\end{aligned}
	$$
	
	\begin{rmk}\label{comsums}
		The functor $(\cdot)_{\eta*}$ as well as the \'etale and $\B^1$-localizations commute with direct sums (see the proof of \cite[Proposition 4.18]{vezz-fw}). Therefore, the functor $\RR(\cdot)_{\eta*}$ also commutes with direct sums.
	\end{rmk}
	
	\begin{dfn}
		The \emph{effective Monsky-Washnitzer realization functor} is the monoidal triangulated functor obtained as the following composition {\small
		$$
		\MW^{\eff*}\colon\!\!\DA^{\eff}_{\et}(k,\Lambda)\xrightarrow[\sim]{\RR(\cdot)_{\sigma*}}\FormDA^{\eff}_{\et}(K^\circ,\Lambda)\xrightarrow{\LL(\cdot)_\eta^*}\RigDA^{\eff}_{\et}(K,\Lambda)\xrightarrow[\sim]{\RR l_*}\RigDA^{\dagger\eff}_{\et}(K,\Lambda).
		$$
	}
		It has a right adjoint $\MW_*^{\eff}$ induced by  $\RR(\cdot)_{\eta*}\colon \RigDA(K,\Lambda)^{\eff}_{\et}\ra\FormDA^{\eff}_{\et}(K^\circ,\Lambda)$. 
		
		This adjoint pair has also a stable version $(\MW^*,\MW_*)$ whose  left adjoint{\small
		$$
		\MW^{*}\colon\DA_{\et}(k,\Lambda)\xrightarrow[\sim]{\RR(\cdot)_{\sigma*}}\FormDA_{\et}(K^\circ,\Lambda)\xrightarrow{\LL(\cdot)_\eta^*}\RigDA_{\et}(K,\Lambda)\xrightarrow[\sim]{\RR l_*}\RigDA^\dagger_{\et}(K,\Lambda)
		$$}
		is called the \emph{stable Monsky-Washnitzer realization functor}.
	\end{dfn}
	
	\begin{rmk}
		Fix  a formal scheme $\mathcal{X}$ of topological finite type over $K^\circ$ with special fiber $\bar{\mcX}=\mcX_\sigma$ and such that its generic rigid fiber $\hat{X}=\mathcal{X}_\eta$ has a dagger structure $X $.  The previous definition can be generalized to the following functor (also admitting a right adjoint and a stable version):
		$$		
		\DA^{\eff}_{\et}(\bar{\mathcal{X}},\Lambda)\xrightarrow[\sim]{\RR(\cdot)_{\sigma*}}\FormDA^{\eff}_{\et}(\mathcal{X},\Lambda)\xrightarrow{\LL(\cdot)_\eta^*}\RigDA^{\eff}_{\et}(\hat{X},\Lambda)\xrightarrow[\sim]{\RR l_*}\RigDA^{\dagger\eff}_{\et}(X,\Lambda ).
		$$
	\end{rmk}
	
	We now remark that our construction overlaps with the classic definition of Monsky-Washnitzer.
	
	\begin{prop}\label{MWexp}
		Let $\mcX$ be a smooth formal scheme of topological finite type over $K^\circ$ with special fiber $\bar{\mcX}$ and generic fiber $\hat{X}$ admitting a dagger structure $X $. Then $\MW^{\eff*}\Lambda(\bar{\mcX})\cong\Lambda(X )$. %
	\end{prop}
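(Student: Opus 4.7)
The strategy is to evaluate the three derived functors constituting $\MW^{\eff*}$ in sequence on $\Lambda(\bar{\mcX})$, using at each step one of the equivalences already established in the excerpt. Since $\mcX$ is a given smooth formal model of $\hat{X}$, and $X$ is a given dagger structure on $\hat{X}$, the varieties $\bar{\mcX}, \mcX, \hat{X}, X$ form a coherent tower to which each functor can be applied tautologically.

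First, $\LL(\cdot)_\sigma^*$ is the left Quillen functor induced by the special fiber functor, and sends the representable $\Lambda(\mcX)$ to $\Lambda(\mcX_\sigma) = \Lambda(\bar{\mcX})$. Since Ayoub's theorem guarantees that $(\LL(\cdot)_\sigma^*, \RR(\cdot)_{\sigma*})$ is an adjoint equivalence, its unit is invertible, so $\RR(\cdot)_{\sigma*}\Lambda(\bar{\mcX}) \cong \Lambda(\mcX)$ in $\FormDA^{\eff}_{\et}(K^\circ,\Lambda)$. Next, $\LL(\cdot)_\eta^*$ is the derived left adjoint associated with the Kan extension of the generic fiber functor, so $\LL(\cdot)_\eta^*\Lambda(\mcX) \cong \Lambda(\mcX_\eta) = \Lambda(\hat{X})$ in $\RigDA^{\eff}_{\et}(K,\Lambda)$. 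Finally, by Theorem \ref{main}, $(\LL l^*, \RR l_*)$ is an equivalence with $\LL l^*\Lambda(X) \cong \Lambda(l(X)) = \Lambda(\hat{X})$; applying the quasi-inverse gives $\RR l_*\Lambda(\hat{X}) \cong \Lambda(X)$. Composing the three identifications yields $\MW^{\eff*}\Lambda(\bar{\mcX}) \cong \Lambda(X)$.

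There is no substantial obstacle: the proposition is really the observation that the three equivalences set up in the earlier sections have been formulated so that, when applied to the representable motives of the tower $\bar{\mcX}, \mcX, \hat{X}, X$ attached to a smooth formal model, each step lands on the next representable. The one minor subtlety is that in the first step one should not try to compute $\RR(\cdot)_{\sigma*}\Lambda(\bar{\mcX})$ directly; instead, one exploits the fact that $\Lambda(\bar{\mcX})$ is already in the essential image of $\LL(\cdot)_\sigma^*$, applied to the smooth formal model $\mcX$ itself, so the invertibility of the unit of the equivalence does all the work.
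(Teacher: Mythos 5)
Your argument is correct and is essentially identical to the paper's proof, which likewise reduces everything to the formulas $\LL(\cdot)_\sigma^*\Lambda(\mcX)\cong\Lambda(\bar{\mcX})$, $\LL(\cdot)_\eta^*\Lambda(\mcX)\cong\Lambda(\hat{X})$ and $\LL l^*\Lambda(X)\cong\Lambda(\hat{X})$, using the equivalences of Ayoub and of Theorem \ref{main} to pass back through the right adjoints. Nothing to add.
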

	
	\begin{proof}
		This follows from the definition of $\MW^{\eff*}$ and the formulas $\LL(\cdot)_\sigma(\Lambda(\mcX))\cong\Lambda(\bar{\mcX})$, $\LL(\cdot)_\eta^*(\Lambda(\mcX))\cong\Lambda(\hat{X})$ and $\LL l^*(\Lambda(X ))\cong\Lambda(\hat{X})$.
	\end{proof}
	
	It is immediate to see that these functors can be used to define a  cohomology theory for varieties $\bar{X}$ over $k$ satisfying \'etale descent and homotopy invariance, and such that it coincides with the de Rham cohomology of the generic fiber $\mcX_\eta$ whenever $\bar{X}$ admits a smooth formal model $\mcX$ over $K^\circ$.  We now describe this construction and we will later show (Proposition \ref{rep}) that this definition overlaps with the classical definition of rigid cohomology.
	
	\begin{dfn}
		We denote by $\Omega^{1\dagger}$ the presheaf on smooth  dagger spaces $\Omega^1_{\mcO^\dagger\!/K}$ and by $\Omega^{q\dagger}$ its $q$-th exterior power. It associates to a smooth dagger space $X$ the space of overconvergent differential $d$-forms $\Omega^{d\dagger}_{X/K}(X)$. When restricted to the analytic site of a dagger space $X$ it defines a coherent $\mcO^\dagger$-module. For any fixed map $\Lambda\ra K$,  the \emph{ overconvergent de Rham complex } is the object  $\Omega^\dagger$ in $\Ch\Psh(\RigSm^\dagger\!/K)$ concentrated in negative degrees with $\Omega^{\dagger}_{-q}=\Omega^{q\dagger}$ for $q\geq0$ and with the usual differential maps. It associates to a smooth dagger variety $X$ the complex $0\ra\mcO^\dagger_{X/K}(X)\ra\Omega^{1\dagger}_{X/K}(X)\ra\Omega^{2\dagger}_{X/K}(X)\ra\ldots$ (see  \cite[Definition 7.5.11]{fvdp}). We also denote with $\Omega^\dagger$ the associated motive in $\RigDA^{\dagger\eff}_{\et}(K,\Lambda)$.  Similarly, we denote by $\Omega$  de Rham complex in $\Ch\Psh(\RigSm/K)$ [resp. $\Ch\Psh(\Sm/K)$] as well as the associated motive in $\RigDA^{\eff}_{\et}(K,\Lambda)$ [resp. in $\DA^{\eff}_{\et}(K,\Lambda)$].
	\end{dfn}

	\begin{lemma}\label{omegaet}
		For any $q$ the restriction of $\Omega^{q\dagger}$ to the small \'etale site of a smooth dagger space $X $ coincides with the sheaf induced by the coherent $\mcO^\dagger_{\hat{X}}$-module $\Omega^{q\dagger}_{X/K}$.
	\end{lemma}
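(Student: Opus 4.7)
The proof strategy is to identify the restriction of the global presheaf $\Omega^{q\dagger}$ with the étale extension of the coherent $\mcO^\dagger_{\hat X}$-module $\Omega^{q\dagger}_{X/K}$ provided by Proposition \ref{cohocoh}(2). Concretely, on an affinoid étale $f\colon Y\to X$, the étale extension has sections
\[
(f^*\Omega^{q\dagger}_{X/K})(Y)\cong \mcO^\dagger(Y)\otimes_{\mcO^\dagger(X)}\Omega^{q\dagger}_{X/K}(X),
\]
by Proposition \ref{cohocoh}(1)--(2), whereas $\Omega^{q\dagger}(Y)=\Omega^{q\dagger}_{Y/K}(Y)$ by the definition of the presheaf $\Omega^{q\dagger}$. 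The task thus reduces to producing a natural isomorphism between these two, functorial in $Y$ over $X$.

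Since $X$ is smooth, $\Omega^{1\dagger}_{X/K}$ is a locally free $\mcO^\dagger_{\hat X}$-module and $\Omega^{q\dagger}_{X/K}\cong\bigwedge^q\Omega^{1\dagger}_{X/K}$; as exterior powers commute with flat base change, it suffices to treat the case $q=1$. I would then invoke the (continuous) fundamental right-exact sequence
\[
\mcO^\dagger(Y)\otimes_{\mcO^\dagger(X)}\Omega^{1\dagger}_{X/K}(X)\longrightarrow \Omega^{1\dagger}_{Y/K}(Y)\longrightarrow \Omega^{1\dagger}_{Y/X}(Y)\longrightarrow 0,
\]
combined with flatness of $\mcO^\dagger(Y)$ over $\mcO^\dagger(X)$ (which follows from the étale hypothesis via Remark \ref{fet} and Proposition \ref{dagcov1}), to reduce the whole statement to the vanishing $\Omega^{1\dagger}_{Y/X}(Y)=0$.

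For this vanishing, Corollary \ref{dagcov} allows me to work locally and assume that $f$ is either a rational embedding or a finite étale map. In the first case $\mcO^\dagger(Y)$ is a localization of $\mcO^\dagger(X)$ by Proposition \ref{eqrational}, and relative continuous differentials vanish for localizations. In the second case, Remark \ref{fet} yields $\mcO^\dagger(Y)\cong \hat S_1\otimes_{\hat R_1}\mcO^\dagger(X)$ for a finite étale extension $\hat R_1\to\hat S_1$, and the dagger tensor product coincides with the ordinary one (by \cite[Lemma 1.10]{gk-over}, used already in the proof of Proposition \ref{cohocoh}), so the vanishing reduces to the classical algebraic fact for finite étale extensions. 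The main technical point is to justify the fundamental exact sequence and the flatness claim in the \emph{continuous} dagger setting; these should follow from the description of a dagger algebra as a filtered colimit $\varinjlim \hat R_h$ of Tate algebras together with the known behavior of modules of continuous Kähler differentials under such colimits and under étale base change of affinoid algebras.
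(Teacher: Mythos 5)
Your overall strategy is the one the paper uses: reduce, via Proposition \ref{dagcov1} and a rational cover, to compositions of rational embeddings and finite \'etale maps of affinoid dagger spaces, and handle the finite \'etale piece through Remark \ref{fet} and the agreement of $\otimes^\dagger$ with $\otimes$ for finite maps; the paper's proof is just this, writing directly $\Omega^{q}(\mcO^\dagger(Y))\cong\Omega^{q}(\mcO^\dagger(X))\otimes_{\mcO^\dagger(X)}\mcO^\dagger(Y)$ for the finite \'etale ring map $\mcO^\dagger(X)\ra\mcO^\dagger(Y)$. The rational-embedding case needs essentially no argument, since by definition (and Proposition \ref{eqrational}) $\Omega^{q\dagger}$ restricted to the analytic site is already a coherent $\mcO^\dagger$-module, so compatibility with rational restriction is built in; your detour through the cotangent sequence and exterior powers is unnecessary for this.

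Two of your auxiliary justifications are misstated, though neither is fatal. First, for a rational subdomain $U\subset X$ the algebra $\mcO^\dagger(U)=\mcO^\dagger(X)\langle\underline{\tau}\rangle^\dagger/(g\tau_i-f_i)$ is \emph{not} a localization of $\mcO^\dagger(X)$, so ``relative continuous differentials vanish for localizations'' does not apply as written; either use the coherence of $\Omega^{q\dagger}$ on the rational site, or note that $g$ is invertible on $U$ (Corollary \ref{invdag}) and compute the relative differentials directly. Second, flatness together with the right-exact sequence and the vanishing of $\Omega^{1\dagger}_{Y/X}$ only gives \emph{surjectivity} of $\mcO^\dagger(Y)\otimes_{\mcO^\dagger(X)}\Omega^{1\dagger}_{X/K}(X)\ra\Omega^{1\dagger}_{Y/K}(Y)$; injectivity requires the locally split left-exactness coming from smoothness/\'etaleness of $f$ (or simply the \'etale base-change isomorphism for differentials, which for the finite \'etale case you invoke at the end anyway, and which is what the paper cites in one line). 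With these two points repaired, your argument coincides in substance with the paper's.
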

	
	\begin{proof}
		Fix an \'etale morphism $f\colon Y \ra X $. We want to prove that $\Omega^{q\dagger}$ is isomorphic to the sheaf induced by $\Omega^{q\dagger}_{X }$. Up to considering a rational cover, we can also assume that $f$ is a composition of rational embeddings and finite \'etale maps between affinoid dagger spaces. We are then left to consider the case in which $f$ is finite \'etale. 
		
		By  Remark \ref{fet} the morphism $ \mcO^\dagger(X )\ra  \mcO^\dagger(Y )$ is finite \'etale, hence $\Omega^{q\dagger}(Y )=\Omega^{q}(\mcO^\dagger(Y ))\cong\Omega^q(\mcO^\dagger(X ))\otimes_{\mcO^\dagger(X )}\mcO^\dagger(Y )\cong f^*\Omega^{q\dagger}(Y )$ as wanted.
	\end{proof}
	
	We recall that the analytification functor $X\mapsto X^{\an}$ from algebraic varieties over $K$ to rigid analytic varieties over $K$ induces the following adjunction:
	$$
	\adj{\LL\Rig^*}{\DA^{\eff}_{\et}(K,\Lambda)}{\RigDA^{\eff}_{\et}(K,\Lambda)}{\RR\Rig_*}.
	$$
	which also admits a stable version (see \cite[Page 54]{ayoub-rig}).
	
	We will use the term ``representing a cohomological theory" in the following sense.
	
	\begin{dfn}Let $H^*$ be a functor on smooth varieties to graded $\Lambda$-modules (e.g. a cohomology theory). 
		We say that an object $M$ in an effective category of motives \emph{represents} %
		$H^*$ on smooth varieties if there is a canonical isomorphism $\Hom(\Lambda(X)[-i],M)\cong H^i(X)$ for all smooth varieties $X$ and integers $i$. 
	\end{dfn}
	
	\begin{rmk}\label{rmkrep}
		For any motive $N$ the map $N\otimes \Lambda(I)\ra N$ is a $I$-weak equivalence whenever $I$ is the contractible object we are considering. Also, for any \'etale hypercover $\mcU_\bullet\ra X$ the map $\Lambda(\mcU_\bullet)\ra\Lambda(X)$ is an \'etale-weak equivalence (for the notations, see e.g. \cite[Proposition 3.10]{vezz-fw}). We deuce  that $\Hom(\Lambda(\mcU_\bullet)[i],M)\cong\Hom(\Lambda(X)[i],M)\cong\Hom(\Lambda(X\times I)[i],M)$ and therefore any cohomology theory represented by a motive has \'etale descent and is homotopy-invariant with respect to $I$. Vice-versa, if $H^*$ has \'etale descent, then it is representable by a motive $M$ if and only if $\Hom(\Lambda(X)[-i],M)\cong H^i(X)$ holds for all $i$ and all  $X$ varying in a fixed full subcategory of smooth varieties which  generates the \'etale topos (see Proposition \ref{loctop}).
	\end{rmk}
	
	\begin{rmk}
		As soon as the functor $H^*$ as an extra structure or satisfies extra axioms, the previous proposition can be reinforced. For example, one may impose an isomorphism between the two $\delta$-functor structures (if $H^*$ has one).
	\end{rmk}
	
	\begin{rmk}
		Any cohomology theory on smooth algebraic varieties represented by $M$ can be canonically generalized to arbitrary (not necessarily smooth) quasi-projective varieties $\Pi\colon Y\ra \Spec K$ by putting $H^i(Y)=\Hom(\Lambda(Y)[-i],M)$ where $\Lambda(Y)\colonequals\Pi_!\Pi^!\Lambda(K)$ is defined using the extraordinary direct and inverse image functors $(\Pi_!,\Pi^!)$ which are part of the six-operation formalism of algebraic motives (see \cite{ayoub-th1} and \cite{ayoub-th2}). 
	\end{rmk}
	
	We compare the cohomology theories which naturally arise by considering $\Omega^\dagger$ and the functors $\MW^*$, $\LL\Rig^*$ and $\RR l_*$ with some classical ones: the  overconvergent de Rham cohomology, which we will denote by $H_{\odR}^*$, algebraic de Rham cohomology, which we will denote by $H_{\dR}^*$,   and rigid cohomology, which we will denote by $H_{\rig}^*$. For their definitions, we refer to the survey in \cite[Chapter 7]{fvdp}.
	
	\begin{prop}\label{rep}
		Suppose $\Lambda\subset K$.
		\begin{enumerate}
			\item The complex $\Omega^\dagger$ is $(\et,\B^{1\dagger})$-local and represents  overconvergent rigid cohomology $H^*_{\odR}$ on smooth dagger  varieties over $K$. 
			\item The motive $\LL l^*\Omega^\dagger$ represents  overconvergent rigid cohomology $H^*_{\odR}$ on smooth rigid analytic varieties over $K$. 
			\item The motive $\MW^{\eff}_*\Omega^\dagger $ represents rigid cohomology $H^*_{\rig}$ on smooth varieties over $k$.
			\item The motive $\RR\Rig_*\LL l^*\Omega^\dagger$ represents the de Rham cohomology $H^*_{\dR}$ on smooth varieties over $K$. %
		\end{enumerate}
	\end{prop}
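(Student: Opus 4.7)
For part (1) there are two sub-claims: that $\Omega^\dagger$ is $(\et,\B^{1\dagger})$-local, and that its derived mapping spaces compute $H^*_\odR$. For \'etale locality, Lemma \ref{omegaet} shows that each $\Omega^{q\dagger}$ restricts on the small \'etale site of a smooth dagger space to the sheaf attached to a coherent $\mcO^\dagger$-module, and Proposition \ref{cohocoh}(3) then gives vanishing of its higher \'etale cohomology on any affinoid dagger space. Since the complex $\Omega^\dagger$ is bounded (by the dimension) on each variety, a Cartan--Eilenberg spectral sequence promotes this term-wise acyclicity to $\et$-locality of the full complex. For $\B^{1\dagger}$-locality I would invoke the overconvergent Poincar\'e lemma: in characteristic zero, formal integration of a power series in $K\langle T\rangle^\dagger$ strictly improves its radius of convergence, so the one-variable de Rham complex of $\mcO^\dagger(\B^{1\dagger})$ is quasi-isomorphic to $K$; a K\"unneth-type argument then gives $\Omega^\dagger(X)\xrightarrow{\sim}\Omega^\dagger(X\times\B^{1\dagger})$ for every affinoid dagger $X$. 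Once locality is established, Proposition \ref{cohocoh}(1) identifies $\Hom_{\RigDA^{\dagger\eff}_\et}(\Lambda(X)[-i],\Omega^\dagger)$ with $H^i$ of the global-sections complex of $\Omega^\dagger$ on every affinoid $X$, which by definition is $H^i_\odR(X)$; Proposition \ref{genRigDA} and Remark \ref{rmkrep} (together with the classical \'etale descent of overconvergent cohomology) extend this to arbitrary smooth dagger varieties.

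Part (2) is a direct consequence of Theorem \ref{main} and Corollary \ref{genRigDArig}. For any smooth affinoid rigid $\hat Y$ carrying a dagger structure $Y$ we have $\LL l^*\Lambda(Y)\cong\Lambda(\hat Y)$, and the equivalence $(\LL l^*,\RR l_*)$ gives
\[
\Hom_{\RigDA^{\eff}_\et}(\Lambda(\hat Y)[-i],\LL l^*\Omega^\dagger)\cong\Hom_{\RigDA^{\dagger\eff}_\et}(\Lambda(Y)[-i],\Omega^\dagger)\cong H^i_\odR(Y)=H^i_\odR(\hat Y),
\]
the last equality reflecting the standard fact that $H^*_\odR$ depends only on the underlying rigid variety. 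Since such $\hat Y$ generate $\RigDA^\eff_\et(K,\Lambda)$ by Corollary \ref{genRigDArig}, Remark \ref{rmkrep} applies. Part (3) follows by exactly the same recipe, using instead the adjunction $(\MW^{\eff *},\MW^\eff_*)$: for a smooth affine $\bar X$ over $k$ admitting a smooth formal lift $\mcX$ whose generic fibre carries a dagger structure $X$ (which happens \'etale-locally on $\Sm/k$), Proposition \ref{MWexp} yields $\MW^{\eff *}\Lambda(\bar X)\cong\Lambda(X)$, hence
\[
\Hom_{\DA^\eff_\et}(\Lambda(\bar X)[-i],\MW^\eff_*\Omega^\dagger)\cong\Hom(\Lambda(X)[-i],\Omega^\dagger)\cong H^i_\odR(X)=H^i_\rig(\bar X),
\]
the final equality being the original Monsky--Washnitzer definition of rigid cohomology. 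Such $\bar X$ form a generating class for $\DA^\eff_\et(k,\Lambda)$ and $H^*_\rig$ has \'etale descent, so Remark \ref{rmkrep} closes the case.

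For part (4) I would chain the analytification adjunction with part (2): since the analytification $Y^\an$ of a smooth algebraic $Y/K$ is a rigid variety without boundary, it admits a dagger structure, and
\[
\Hom_{\DA^\eff_\et}(\Lambda(Y)[-i],\RR\Rig_*\LL l^*\Omega^\dagger)\cong\Hom_{\RigDA^\eff_\et}(\Lambda(Y^\an)[-i],\LL l^*\Omega^\dagger)\cong H^i_\odR(Y^\an).
\]
The proof then finishes by invoking the classical algebraic-analytic comparison $H^i_\odR(Y^\an)\cong H^i_\dR(Y)$ for smooth algebraic $Y$, which combines rigid GAGA in the proper case with the work of Baldassarri--Chiarellotto and Gro\ss e-Kl\"onne for the open case (reduction to a smooth compactification with normal crossings boundary). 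The two genuinely non-formal inputs are thus the overconvergent Poincar\'e lemma in part (1) and this last algebraic-analytic comparison in part (4); everything else is a formal manipulation of the adjunctions and the equivalence provided by Theorem \ref{main}, and I expect the algebraic-analytic comparison to be the main delicate point to cite precisely, since the overconvergent Poincar\'e lemma is by now a well-documented computation in the literature on dagger spaces.
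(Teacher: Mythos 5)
Your proposal follows essentially the same route as the paper in all four parts: \'etale locality of $\Omega^\dagger$ via Lemma \ref{omegaet} and Proposition \ref{cohocoh} plus a spectral-sequence argument, $\B^{1\dagger}$-locality reduced to classical homotopy invariance of overconvergent de Rham cohomology, and then parts (2)--(4) obtained by pushing the isomorphism through Theorem \ref{main}, the adjunctions $(\MW^{\eff*},\MW^{\eff}_*)$ and $(\LL\Rig^*,\RR\Rig_*)$, the generators of Corollary \ref{genRigDArig} (resp.\ liftable smooth affine $k$-varieties), \'etale descent of rigid cohomology, and the comparison $H^i_{\odR}(X^{\an})\cong H^i_{\dR}(X)$ of Gro\ss e-Kl\"onne; this is exactly the paper's argument.

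Two points in your part (1) deserve correction. First, your micro-justification of the dagger Poincar\'e lemma is backwards: formal integration sends $\sum a_n\tau^n$ to $\sum \frac{a_n}{n+1}\tau^{n+1}$, and since $|n+1|\leq 1$ in mixed characteristic the coefficients are \emph{divided} by small numbers, so integration can only shrink the radius of convergence; the actual point is that it shrinks it by an arbitrarily small amount, so overconvergence (some radius $>1$) is preserved, not improved. The conclusion you want is the classical homotopy invariance $H^i_{\odR}(X\times\B^{1\dagger})\cong H^i_{\odR}(X)$, which the paper cites from Monsky--Washnitzer and Gro\ss e-Kl\"onne rather than re-deriving. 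Second, those classical references are stated for a discretely valued base, while here $K$ is only assumed complete of rank $1$; the paper is explicit about this and covers the general case either by Berkovich's result or by remarking that the Monsky--Washnitzer argument applied with $\pi=p$ over $K^\circ$ does not use discreteness. Your write-up silently assumes the discrete case, which is a small but genuine gap given the paper's standing hypotheses. The rest of your argument (including citing Gro\ss e-Kl\"onne's comparison for part (4), where the paper uses \cite[Theorem 2.3]{gk-dR}) matches the paper's proof.
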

	
	\begin{proof}
		Fix an affinoid smooth dagger space $X $ with limit $\hat{X}$. Each  $\Omega^{q\dagger}$ is a sheaf of coherent $\mcO^\dagger$-modules, hence it is acyclic for the \'etale topology by Proposition \ref{cohocoh} and Lemma \ref{omegaet}.
		From a spectral sequence argument, we conclude that $H_i(\Gamma(X ,\Omega^{\dagger}))=\HH^i_{\et}(X ,\Omega^\dagger)$ which shows that $\Omega^\dagger$ is $\et$-local and that $H_i\Gamma(X ,\Omega^\dagger)=H^i_{\odR}(X )$. %
		
		We now prove that $\Omega^\dagger$  is $\B^{1\dagger}$-local. From the computations above, this amounts to prove that $H^i_{\odR}(X \times\B^{1\dagger})\cong H^i_{\odR}(X )$ which is classical (see \cite[Theorem 5.4]{mw-fc1} or \cite[Theorem 4.12]{gk-over} and \cite[Example 1.8]{gk-dR}) in case $K$ has a discrete valuation, and follows from \cite[Corollary 5.5.2]{berk-int} in the general case, since any smooth affinoid space has a model over a discrete-valuation complete sub-field of $K$.
		
		We observe that for any smooth rigid affinoid space $\hat{X}$ with a chosen associated dagger structure $X $ by Theorem \ref{main} we see that:
		$$
		\begin{aligned}
		\RigDA^{\eff}_{\et}(K,\Lambda)(\Lambda(\hat{X})[-i],\LL l^*\Omega^\dagger)&\cong\RigDA^{\dagger\eff}_{\et}(K,\Lambda)(\RR j_*\Lambda(\hat{X})[-i],\RR l_*\LL l^*\Omega^\dagger)\\
		&\cong\RigDA^{\dagger\eff}_{\et}(K,\Lambda)(\Lambda(X )[-i],\Omega^\dagger)\cong H^i_{\odR}(\hat{X}) 
		\end{aligned} 
		$$ 
		with the composite isomorphism canonical on $\hat{X}$. This is enough to show that $\LL l^*\Omega^\dagger$ represents the overconvergent de Rham cohomology on rigid varieties over $K$ (see Remark \ref{rmkrep}). 
		
		Fix now a smooth scheme $\bar{X}$ over $k$ having a smooth formal model $\mcX$. %
		We have  by Theorem \ref{main} 
		$$
		\begin{aligned}
		\DA^{\eff}_{\et}(k,\Lambda)(\Lambda(\bar{X})[-i],\MW^{\eff}_*\Omega^\dagger)&\cong 
		\RigDA^{\dagger\eff}_{\et}(K,\Lambda)(\LL(\cdot)_\eta^*\RR(\cdot)_{\sigma*}\Lambda(\bar{X})[-i],\LL l^*\Omega^\dagger)\\
		&\cong H^i_{\odR}(\mcX_\eta)\cong H^i_{\rig}(\bar{X})
		\end{aligned}
		$$
		and the composite isomorphism is canonical on $\bar{X}$. Since rigid cohomology satisfies \'etale descent \cite{chiar-tsu}, %
		this implies that it is represented by  $\MW^{\eff}_*\Omega^\dagger $ (see Remark \ref{rmkrep}). 
		
		Let now $X$ be a smooth algebraic variety over $K$. 
		By \cite[Theorem 2.3]{gk-dR} we obtain a canonical sequence of isomorphisms
		$$
		\begin{aligned}
		\DA^{\eff}_{\et}(K,\Lambda)(\Lambda(X)[-i],\RR\Rig_*\LL l^*\Omega^\dagger)&\cong\RigDA^{\eff}_{\et}(K,\Lambda)(\Lambda(X^{\an})[-i],\LL l^*\Omega^\dagger)\\
		&\cong H^i_{\odR}(X^{\an})\cong H^i_{\dR}(X)
		\end{aligned}
		$$
		which proves the last statement. 
	\end{proof}

	\begin{rmk}
		The previous proof can easily be made independent on \cite[Corollary 5.5.2]{berk-int}. Indeed, the proof of \cite[Theorem 5.4]{mw-fc1} applied to $K^\circ$ and to $\pi=p$ does not use the fact that the $p$-adic norm is discrete.
	\end{rmk}
	
	\begin{rmk}
		As a whole, we have then provided new formulas computing rigid and  overconvergent de Rham cohomologies by using canonical functors on the categories of motives. The usual formulas, involving colimits over the possible choices of lifting and dagger structures (see \cite{besser}) are encoded in the description of the functors $\RR(\cdot)_{\sigma*}$ and $\RR l_*$ respectively.
	\end{rmk}
	
	\begin{rmk}
		Various authors have already given examples of complexes of presheaves representing rigid cohomology (see \cite{besser}, \cite{dm} and \cite{mr}). By means of the previous proposition,  one can show that such complexes are isomorphic to $\MW^{\eff}_*\Omega^\dagger $ in $\DA^{\eff}_{\et}(k,\Lambda)$.
	\end{rmk}
	
	By what proved above, the following definition is  well posed.
	
	\begin{dfn}\label{cohthys}
		Let $\Lambda$ be $K$ and $i$ be in $\Z$.
		\begin{enumerate}
			\item For any $M$  in $\RigDA_{\et}(K,\Lambda)$ we denote $$H^i_{\odR}(M)\colonequals\Hom(\RR l_*M[-i],\Omega^\dagger).$$
			\item For any $N$    in $\DA_{\et}(k,\Lambda)$ we denote $$H^i_{\rig}(N)\colonequals\Hom(\MW^{*}N[-i],\Omega^\dagger)\cong H^i_{\odR}((\LL(\cdot)_\eta^*\circ\RR(\cdot)_{\sigma*})(N)).$$
		\end{enumerate}
	\end{dfn}
	
	\begin{rmk}
		The above definition of rigid cohomology is obviously functorial. Whenever $k$ is a finite field, we then obtain automatically an action of Frobenius on each cohomology group $H^i_{\rig}(N)$ since  relative Frobenius maps are invertible in $\DA_{\et}(k,\Q)$ (this category is equivalent to the one \emph{with transfers}, see e.g. \cite[Section 16.2]{cd}).
	\end{rmk}

	We conclude by pointing out a straightforward consequence of our constructions, the comparison theorems of Gro\ss e-Kl\"onne \cite{gk-over}, \cite{gk-dR} and a theorem of Ayoub \cite[Theorem 2.5.35]{ayoub-rig}. It can be summarized by saying that the finite dimensionality of  the overconvergent de Rham, Monsky-Washnitzer and rigid cohomology can be formally deduced by the finite-dimensionality of the ``classic" de Rham cohomology for projective algebraic varieties in characteristic zero.

	\begin{dfn}
		Suppose $\Q\subset\Lambda$. We denote by $\DA_{\et,\gm}(K,\Lambda)$ [resp. $\RigDA_{\et,\gm}(K,\Lambda)$] the full triangulated subcategory of $\DA_{\et}(K,\Lambda)$ [resp. $\RigDA_{\et}(K,\Lambda)$] formed by compact objects (see Definition \ref{cpt}).
	\end{dfn}
	
	\begin{rmk}
		The category $\DA_{\et,\gm}(K,\Lambda)$ is denoted by $\DA_{\et}^{\ct}(K,\Lambda)$ in \cite{ayoub-th1} and \cite{ayoub-th2} where $\ct$ stands for "constructible" and $\gm$ for "geometric". The equivalence of all these notions follows from \cite[Proposition 2.1.24]{ayoub-th1} (see also \cite[Theorem 1.4.40]{ayoub-rig}).
	\end{rmk}
	
	\begin{exm}\label{excpt}
		For any quasi-projective variety of finite type $\Pi\colon X\ra\Spec k$  the motive $\Lambda(X)=\Pi_!\Pi^!\Lambda(K)$ is compact (see \cite[Scholium 2.2.34]{ayoub-th1}). For any smooth quasi-compact rigid analytic variety $X$ over $K$ the motive $\Lambda(X)$ is compact (see \cite[Proposition 1.2.34]{ayoub-rig}).
	\end{exm}
	
	We can now recall the theorem of Ayoub on a generating set for rigid analytic motives with rational coefficients.

	\begin{thm}[{\cite[Theorem 2.5.35]{ayoub-rig},\cite[Definition 14.1]{mvw}}]\label{ayoubgen1}
		Suppose  $\Q\subset\Lambda$. The category $\DA_{\et}^{\eff}(K,\Lambda)$ [resp. $\RigDA_{\et}^{\eff}(K,\Lambda)$] is compactly generated (as a triangulated category with small sums) by the motives $\Lambda(X)$ where $X$ runs among [analytifications of] smooth projective varieties over $K$. %
	\end{thm}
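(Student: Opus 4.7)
The plan is to treat the algebraic and rigid cases separately, with the algebraic case serving as a blueprint. Throughout, we use that the effective category $\DA_{\et}^{\eff}(K,\Lambda)$ is, by construction, compactly generated by the motives of smooth varieties $X/K$ (and analogously for the rigid case by smooth rigid varieties), so the task is to reduce this \emph{a priori} set of generators to smooth projective ones (resp.\ their analytifications).

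For the algebraic case, I would first reduce to smooth quasi-projective varieties using the fact that any smooth $K$-variety is Zariski-locally affine and that \'etale-local generators suffice (as in Proposition~\ref{loctop}). Then I would argue by induction on the dimension $d$. Given $X$ smooth quasi-projective of dimension $d$, Hironaka's resolution of singularities provides an open immersion $X \hookrightarrow \bar{X}$ with $\bar{X}$ smooth projective and $D \colonequals \bar{X}\setminus X$ a strict normal crossings divisor. The Gysin/localization distinguished triangle expresses $\Lambda(X)$ as an iterated extension of $\Lambda(\bar{X})$ and of Tate twists of motives of the closed strata of $D$, each of which is smooth projective of dimension $<d$. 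Using $\Q\subset\Lambda$ ensures the Tate object is invertible in the effective category after passing to the stable one, but for the effective statement one invokes the cancellation-free formulation via Nisnevich/\'etale excision; de Jong alterations combined with $\Q$-coefficients give an alternative route bypassing Hironaka if desired. Induction on $d$ closes the argument.

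For the rigid case, the argument is substantially more involved, and this is where the main obstacle lies. One starts, as in Proposition~\ref{genRigDA}, with the fact that motives of smooth affinoid rigid varieties form a set of compact generators. The strategy, following Ayoub, is: (i) use Raynaud's theorem to replace each smooth affinoid by the generic fiber of an admissible formal $K^\circ$-scheme; (ii) use alterations on formal schemes (de Jong or Temkin, depending on whether the residue characteristic is zero or positive) to produce, after finite \'etale base change and \'etale-local refinement, a proper semistable formal model $\mathfrak{X}$ whose special fiber $\mathfrak{X}_\sigma$ is a strict normal crossings union of smooth projective varieties over $k$; (iii) via a motivic nearby-cycles / weight-spectral-sequence argument, together with the equivalence $\RR(\cdot)_{\sigma*}\colon \FormDA_{\et}^{\eff}(\mathfrak{X},\Lambda)\cong\DA_{\et}^{\eff}(\mathfrak{X}_\sigma,\Lambda)$ recalled in the previous section, express the motive of $\mathfrak{X}_\eta$ as an iterated extension built from motives of the strata of $\mathfrak{X}_\sigma$, which are smooth projective over $k$. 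Finally, one lifts each such stratum to a smooth projective scheme over $K$ (using smoothness of the formal strata and formal GAGA, or by reducing to the case where the residue field admits such a lift), whose analytification then provides the required generator in $\RigDA_{\et}^{\eff}(K,\Lambda)$.

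The main obstacle is step (iii) together with the lifting in the final step: controlling the motivic weight filtration associated with a semistable formal model, and ensuring that the resulting generators can be chosen among analytifications of smooth projective $K$-varieties rather than merely of smooth projective $k$-varieties. This is precisely the technical heart of \cite[Theorem~2.5.35]{ayoub-rig}, where a careful induction on the dimension of the strata, combined with the six-functor formalism and the equivalences between formal, algebraic and rigid motives, is required. Once this is in place, compactness of each generator $\Lambda(X^{\an})$ is guaranteed by Example~\ref{excpt}, and generation follows by the same Zariski/\'etale-local detection principle as in the algebraic case.
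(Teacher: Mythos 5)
There is a genuine mismatch here, both with what the paper actually does and internally within your own argument. In the paper this statement is not proved at all: it is recalled as Ayoub's theorem (\cite[Theorem 2.5.35]{ayoub-rig}, together with \cite[Definition 14.1]{mvw} for the algebraic case), and the only mathematical content the paper adds is in Remark \ref{remayoubgen}, namely that Ayoub's original statement concerns motives \emph{with transfers}, and that one passes to the transfer-free categories $\DA_{\et}$ and $\RigDA_{\et}$ used here via the equivalences of \cite{vezz-DADM} (see also \cite[Appendix B]{ayoub-etale}), which is where the hypothesis $\Q\subset\Lambda$ enters. Your proposal, by contrast, tries to sketch a proof of Ayoub's theorem itself, yet its decisive step (your step (iii), the motivic weight/nearby-cycles argument for a semistable formal model) is explicitly deferred to the very theorem being proved --- you write that it ``is precisely the technical heart of \cite[Theorem~2.5.35]{ayoub-rig}.'' As a standalone argument this is circular, and at the same time you omit the one step the paper does supply, the removal of transfers.

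Beyond that, two steps of your sketch would not go through as stated. First, in the rigid case your final step lifts each smooth projective stratum of the special fiber $\mathfrak{X}_\sigma$ from $k$ to a smooth projective scheme over $K$; such liftings do not exist in general (smooth projective varieties over a field of characteristic $p$ need not lift to characteristic $0$), and Ayoub's induction does not require them --- it produces generators that are analytifications of smooth projective $K$-varieties by a different mechanism, not by lifting special-fiber strata one at a time. Second, in the effective algebraic case the Gysin/localization triangles produce Tate twists $\Lambda(Z)(c)[2c]$, and your appeal to ``cancellation-free formulations'' or to passing to the stable category does not settle the effective statement; the standard repair is to note that for $Z$ smooth projective the twisted motive $\Lambda(Z)(c)[2c]$ is a direct summand of $\Lambda(Z\times\mathbb{P}^{c})$, hence already lies in the triangulated subcategory with sums and summands generated by smooth projective motives. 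If you want a write-up consistent with the paper, the correct shape is: quote Ayoub's theorem (with transfers), then invoke the monoidal triangulated equivalences between motives with and without transfers for $\Q$-coefficients to transport the generating family, checking that these equivalences send $\Lambda_{\tr}(X)$ to $\Lambda(X)$ and preserve compactness and small sums.
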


	\begin{rmk}\label{remayoubgen}
		The original version of the previous result considers the categories of motives \emph{with transfers} (defined in \cite{mvw} and \cite{ayoub-rig}). We can state it also for motives without transfers by means of the equivalences proved in\cite{vezz-DADM} (see also  \cite[Appendix B]{ayoub-etale}). The proof of the statement above is highly non-trivial, and uses the whole equipment of resolution of singularities, induction on dimension and localization.
	\end{rmk}
	
	\begin{cor} \label{ayoubgen}
		Suppose $\Q\subset\Lambda$. The category $\DA_{\et,\gm}(K,\Lambda)$ [resp. $\RigDA_{\et,\gm}(K,\Lambda)$] coincides with the  triangulated subcategory of $\DA_{\et }(K,\Lambda)$ [resp. $\RigDA_{\et}(K,\Lambda)$] closed under direct summands generated by the motives $\Lambda(X)(d)$ where $X$ runs among [analytifications of] smooth projective varieties over $K$ and $d$ runs in $\Z$. All its objects are strongly dualizable.
	\end{cor}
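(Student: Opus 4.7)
The plan is to reduce the statement to Theorem \ref{ayoubgen1} together with two standard facts: a compactly generated triangulated category has its full subcategory of compact objects equal to the thick subcategory generated by any set of compact generators, and strong dualizability is preserved by all the operations needed.

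First, I would upgrade Theorem \ref{ayoubgen1} to the stable setting. The Tate object $T = \Lambda(1)[1]$ is invertible in $\DA_{\et}(K,\Lambda)$ and in $\RigDA_{\et}(K,\Lambda)$, and the suspension functor $\LL\Sus_0$ commutes with small sums and sends compact generators to compact generators; inverting $T$ only forces us to enlarge the generating family by Tate twists. Thus $\DA_{\et}(K,\Lambda)$ (resp.\ $\RigDA_{\et}(K,\Lambda)$) is compactly generated, as a triangulated category with small sums, by the motives $\Lambda(X)(d)$ with $X$ smooth projective over $K$ (resp.\ analytifications thereof) and $d\in\Z$. The objects $\Lambda(X)(d)$ are compact by Example \ref{excpt} and the invertibility of $T$.

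Next, I would invoke Neeman's theorem on compactly generated triangulated categories: if $\catT$ is compactly generated by a set $\mathcal{G}$ of compact objects, then the subcategory $\catT^c$ of all compact objects is exactly the thick subcategory (i.e.\ triangulated subcategory closed under direct summands) generated by $\mathcal{G}$. One direction is automatic since compact objects are closed under shifts, cones and summands; the non-trivial direction is a standard argument using Brown representability and the fact that a compact object $X$ whose image in $\catT/\langle\mathcal{G}\rangle_{\mathrm{thick}}$ is zero must already be a summand of an iterated extension of objects in $\mathcal{G}$. Applying this to $\mathcal{G} = \{\Lambda(X)(d)\}$ gives the first assertion both algebraically and rigid-analytically.

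Finally, for strong dualizability: when $X$ is smooth projective of dimension $d$ over $K$, the motive $\Lambda(X)$ in $\DA_{\et}(K,\Lambda)$ is strongly dualizable, with dual $\Lambda(X)(-d)[-2d]$, via the relative purity and Poincaré duality built into Ayoub's six-functor formalism (applied to $\Pi\colon X\to \Spec K$, using $\Pi_!\Pi^!\cong\Pi_*\Pi^*\otimes(-d)[-2d]$ for $\Pi$ smooth proper). The same holds for analytifications by the analogous formalism in $\RigDA_{\et}$. The full subcategory of strongly dualizable objects of any closed symmetric monoidal triangulated category is closed under shifts, Tate twists (which are by invertible objects), cones, and direct summands. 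Hence it contains the thick subcategory generated by the $\Lambda(X)(d)$, which by the previous paragraph is the whole of $\DA_{\et,\gm}(K,\Lambda)$ (resp.\ $\RigDA_{\et,\gm}(K,\Lambda)$).

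The main obstacle is the invocation of the six-functor formalism to produce the duals of smooth projective motives; everything else is formal category theory applied to the already-proved generation statement of Theorem \ref{ayoubgen1}.
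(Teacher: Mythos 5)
Your proposal is correct and follows the same skeleton as the paper: pass from the effective generation statement of Theorem \ref{ayoubgen1} to the stable category by adjoining Tate twists, then identify the compact objects with the thick subcategory generated by the $\Lambda(X)(d)$ via the Neeman--Ravenel type result (the paper cites \cite[Proposition 2.1.24]{ayoub-th1} for exactly this), and finally deduce strong dualizability from dualizability of the generators, since strongly dualizable objects form a thick subcategory. The one place where you diverge is the dualizability of the \emph{analytified} generators: you appeal to ``the analogous formalism in $\RigDA_{\et}$,'' i.e.\ a relative purity/Poincar\'e duality statement for smooth proper morphisms of rigid analytic varieties, which is a substantially heavier input and is not among the tools the paper's references provide. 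The paper avoids this entirely: it cites \cite[Lemma 1.3.29]{ayoub-rig} for strong dualizability of $\Lambda(X)(d)$ only on the \emph{algebraic} side, and then observes that $\LL\Rig^*$ is monoidal, so that the rigid generators $\Lambda(X^{\an})(d)$, being images of strongly dualizable objects under a monoidal functor, are automatically strongly dualizable. You should replace your appeal to a rigid-analytic six-functor duality by this one-line transfer through $\LL\Rig^*$; with that substitution your argument coincides with the paper's proof.
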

	
	\begin{proof}
		From the previous theorem, we deduce that the motives $\Lambda(X)(d)$ with $d\in\Z$ and with $X$ [analytification of] a smooth projective variety are generators of the stable category $\DA_{\et }(K,\Lambda)$ [resp. $\RigDA_{\et}(K,\Lambda)$]. The first statement follows then from   \cite[Proposition 2.1.24]{ayoub-th1} (sometimes referred to as the theorem of Neeman and Ravenel, see \cite{neeman-rav}). 
		Since $\LL\Rig^*$ is monoidal and the motive $\Lambda(X)(d)$ is strongly dualizable in $\DA_{\et,\gm}(K,\Lambda)$ for any smooth projective variety $X$ (see \cite[Lemma 1.3.29]{ayoub-rig}), we also deduce  the final claim.
	\end{proof}

	\begin{cor}\label{findim}
		Suppose $\Lambda=K$ and let  $M$ be in $\RigDA_{\et,\gm}(K,\Lambda)$. Then $H^i_{\odR}(M)$ is finite dimensional for all $i$ and  equal to $0$ for $|i|\gg 0$. In particular, if $N$ is in $\DA_{\et,\gm}(k,\Lambda)$ then $H^i_{\rig}(N)$ is finite dimensional for all $i$ and equal to $0$ for $|i|\gg 0$.
	\end{cor}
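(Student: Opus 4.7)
The plan is to identify the motives with well-behaved overconvergent de Rham cohomology as a thick subcategory of $\RigDA_{\et,\gm}(K,K)$, and then to reduce to a set of generators via Ayoub's generation theorem.

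Let $\mathcal{C}\subset\RigDA_{\et,\gm}(K,K)$ denote the full subcategory consisting of motives $M$ for which $\dim_K H^i_{\odR}(M)<\infty$ for every $i\in\Z$ and $H^i_{\odR}(M)=0$ for $|i|$ sufficiently large. Since $H^*_{\odR}$ is a cohomological functor and bounded finite-dimensional graded $K$-vector spaces form a thick subclass, $\mathcal{C}$ is a triangulated subcategory closed under direct summands. By Corollary \ref{ayoubgen} it is then enough to check that $\Lambda(X^{\an})(d)\in\mathcal{C}$ for every smooth projective $K$-variety $X$ and every $d\in\Z$.

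For $d=0$, Proposition \ref{rep}(2) yields $H^i_{\odR}(\Lambda(X^{\an}))\cong H^i_{\odR}(X^{\an})$; since $X^{\an}$ is smooth proper, every form is trivially overconvergent, so this agrees with the classical algebraic de Rham cohomology $H^i_{\dR}(X)$ (via the comparison invoked in Proposition \ref{rep}(4) using \cite[Theorem 2.3]{gk-dR}), which is finite-dimensional and concentrated in $[0,2\dim X]$. For $d\geq 0$, the stable decomposition $\Lambda(\mathbb{P}^{1,\an})\simeq\mathbf{1}\oplus\mathbf{1}(1)[2]$ realizes $\Lambda(X^{\an})(d)[2d]$ as a direct summand of $\Lambda((X\times(\mathbb{P}^1)^d)^{\an})$, which lies in $\mathcal{C}$ by the $d=0$ case applied to the smooth projective variety $X\times(\mathbb{P}^1)^d$. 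For $d<0$, I would combine strong dualizability of every compact motive (Corollary \ref{ayoubgen}) with the motivic Poincar\'e duality $\Lambda(X^{\an})^\vee\simeq\Lambda(X^{\an})(-e)[-2e]$ (with $e=\dim X$) and the Tate periodicity of the representing motive $\Omega^\dagger$ in the stable category (namely $\Omega^\dagger\simeq\Omega^\dagger(-1)$, a consequence of the computation of $H^*_{\odR}(\mathbb{P}^{1,\an})$ through the cofiber sequence defining $T^\dagger$, together with the orientation given by the first Chern class) to rewrite $H^i_{\odR}(\Lambda(X^{\an})(d))$ as a cohomology group of a motive with nonnegative Tate twist already covered.

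The second statement follows immediately: by Definition \ref{cohthys} we have $H^i_{\rig}(N)=H^i_{\odR}((\LL(\cdot)^*_\eta\circ\RR(\cdot)_{\sigma*})(N))$, and it suffices to see that this composite sends compact objects to compact objects. The functor $\RR(\cdot)_{\sigma*}$ is an equivalence, hence preserves compactness, and $\LL(\cdot)^*_\eta$ preserves compactness because its right adjoint commutes with direct sums (Remark \ref{comsums}); so this composite maps $\DA_{\et,\gm}(k,K)$ into $\RigDA_{\et,\gm}(K,K)$, whence the first part applies. The subtle step of the whole argument is the control of negative Tate twists via Tate periodicity of $\Omega^\dagger$, which is not explicitly recorded in the preceding sections and would deserve a short dedicated verification.
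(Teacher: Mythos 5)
Your proposal follows essentially the same route as the paper: the motives with bounded, finite-dimensional overconvergent de Rham cohomology form a thick triangulated subcategory, one reduces via Corollary \ref{ayoubgen} to Tate twists of analytifications of smooth projective varieties, identifies their cohomology with algebraic de Rham cohomology through \cite[Theorem 2.26]{gk-over} and \cite[Theorem 2.3]{gk-dR}, and the rigid-cohomology statement is deduced exactly as in the paper from the facts that $\RR(\cdot)_{\sigma*}$ is an equivalence and $\LL(\cdot)_\eta^*$ has a sum-preserving right adjoint (Remark \ref{comsums}). The only divergence is the bookkeeping of the twists $(d)$: the paper recasts the claim as the assertion that the covariant triangulated functor $M\mapsto\Hom_\bullet(M^\vee,\LL l^*\Omega^\dagger)$ lands in compact objects of $\catD(\Lambda)$ and then verifies it on the untwisted generators $\Lambda(X^{\an})$ only, leaving the effect of twisting implicit, whereas you handle $d\geq 0$ by the $(\mathbb{P}^1)^d$-summand trick and propose Poincar\'e duality together with Tate periodicity of the representing spectrum for $d<0$, flagging the latter as unverified. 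That flag is fair, but it does not point to a defect specific to your argument: some statement to the effect that tensoring with $\Lambda(1)[2]$ only shifts overconvergent de Rham cohomology (equivalently, orientability and periodicity of the de Rham spectrum, which follows from the computation of $H^*_{\odR}(\mathbb{P}^{1,\an})$ and is automatic in the mixed Weil formalism of \cite{cd-mw}) is needed to treat nonzero twists in the stable category, and the paper's own proof is no more explicit on this point than yours. So your step for $d<0$ is sketched but sound, and otherwise your argument is correct and coincides with the paper's.
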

	
	\begin{proof}
		In order to prove the statement, it suffices to prove that the triangulated functor $M\mapsto\Hom_\bullet(M^\vee,\LL l^*\Omega^\dagger)$ from $\RigDA_{\et,\gm}(K,\Lambda)$ to $\catD(\Lambda)$ takes values in the triangulated subcategory of compact objects  in $\catD(\Lambda)$. By what showed above, it suffices to show that for any smooth projective variety $X$ the complex $\Hom_\bullet(\Lambda(X^{\an}),\LL l^*\Omega^\dagger)$ is compact. But this complex is, by Theorem \ref{main} and \cite[Theorem 2.26]{gk-over} canonically isomorphic to $\Omega(X^{\an})$  which is in turn quasi-isomorphic,  by \cite[Theorem 2.3]{gk-dR} to the   complex $\Omega(X)$ which is manifestly compact. 
		
		For the last assertion, by Definition \ref{cohthys} it suffices to remark that both the functor $\LL(\cdot)_\eta^*$ and the functor $\RR(\cdot)_{\sigma*}$ preserve compact objects: the former has a sum-preserving right adjoint (see Remark \ref{comsums}) and the latter is a triangulated equivalence.
	\end{proof}
	
	\begin{rmk}
		When we apply the previous corollary to the motives $M=\Lambda(X)$ for a quasi-projective variety $X$ over $k$ of finite type [resp. a smooth quasi-compact rigid analytic variety $X$ over $K$] (see Example \ref{excpt}) we obtain the finite dimensionality and the boundedness of the rigid [resp. overconvergent de Rham]   cohomology groups of $X$.
	\end{rmk}
	
	\begin{rmk}
		The proof consists in a big ``formal"  step, which reduces the assertion to the study  of $\Omega(X)$ for a smooth projective variety $X$ over $K$. We can therefore conclude that the finite dimensionality of the overconvergent and rigid cohomologies follows formally from the finite dimensionality of the cohomology of $\Omega(X)$ aka  the Betti cohomology of $X(\C)$ (fixing a map  of abstract fields $K\ra\C$ to an algebraically closed complete archimedean field $\C$). %
	\end{rmk}
	
	\begin{rmk}
		We  point out that we do not require that the valuation of $K$ is discrete, and that the previous proof easily generalizes to arbitrary cohomology theories represented in $\RigDA(K,\Q)$ which are finite dimensional on analytifications of projective smooth algebraic varieties over $K$.
	\end{rmk}
	
	Similarly, we obtain an alternative proof of a result of Gabber \cite[Corollary 5.5.2]{berk-int}. We now indicate with $\Omega_K^\dagger$ the complex $\Omega^\dagger$ to emphasize its dependence on the base field $K$.
	
	\begin{cor}
		Let $L/K$ be an extension of complete valued fields and suppose $\Lambda=L$. Let $f$ be the induced map $\Spa L\ra\Spa K$. The canonical map in $\RigDA^{\dagger\eff}_{\et}(K,\Lambda)$\[
		\Omega^\dagger_K\otimes_KL\ra \RR f_*\Omega^\dagger_{L}
		\]
		is an isomorphism.
	\end{cor}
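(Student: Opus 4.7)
The plan is to test the map on a generating family of compact objects and reduce to the classical flat base change for algebraic de Rham cohomology of smooth projective varieties. The key inputs will be Theorem \ref{main}, Ayoub's generation Theorem \ref{ayoubgen1}, and the representability of overconvergent de Rham cohomology (Proposition \ref{rep}).

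First, I would observe that by combining Theorem \ref{main} with Theorem \ref{ayoubgen1}, the category $\RigDA^{\dagger\eff}_{\et}(K,L)$ is compactly generated (as a triangulated category with small sums) by motives $\Lambda(X^\dagger)$, where $X$ ranges over smooth projective algebraic varieties over $K$ and $X^\dagger$ denotes a dagger structure on the analytification $X^{\an}$ (which exists since $X^{\an}$ is proper, hence without boundary). It then suffices to show that the map of the statement becomes an isomorphism after applying $\Hom(\Lambda(X^\dagger)[-i],-)$ for all such $X$ and all $i\in\Z$.

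For the right-hand side, the adjunction $\LL f^*\dashv\RR f_*$ together with the identification $\LL f^*\Lambda(X^\dagger)\cong\Lambda(X^\dagger_L)$ gives
\[
\Hom\bigl(\Lambda(X^\dagger)[-i],\RR f_*\Omega^\dagger_L\bigr)\cong\Hom\bigl(\Lambda(X^\dagger_L)[-i],\Omega^\dagger_L\bigr)\cong H^i_{\odR}(X^\dagger_L)\cong H^i_{\dR}(X_L),
\]
where the last step combines Proposition \ref{rep}(1) applied over $L$ with the classical comparison between overconvergent and algebraic de Rham cohomology for the analytification of a smooth projective variety (cf.\ \cite[Theorem 2.3]{gk-dR} and Proposition \ref{rep}(4)). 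For the left-hand side, compactness of $\Lambda(X^\dagger)$ together with the fact that $L$ is a filtered colimit of its finite-dimensional $K$-subspaces allows $\Hom(\Lambda(X^\dagger)[-i],-)$ to commute with the scalar extension $-\otimes_K L$, so the group becomes $H^i_{\odR}(X^\dagger)\otimes_K L\cong H^i_{\dR}(X)\otimes_K L$ by the same comparison.

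To finish, I would invoke the classical flat base change isomorphism $H^i_{\dR}(X)\otimes_K L\cong H^i_{\dR}(X_L)$ for $X$ smooth and projective over $K$, which follows from flat base change for coherent cohomology applied to each term of the (degenerating) Hodge-to-de Rham spectral sequence. The main obstacle I expect is the bookkeeping required to verify that the composition of canonical isomorphisms produced by the above chain actually transports the morphism constructed in the statement into this classical base change map; this amounts to tracing naturality through Theorem \ref{main}, the adjunction $\LL f^*\dashv\RR f_*$, and the scalar extension functor on coefficients, which is routine but not entirely transparent and contributes no additional geometric input beyond the comparison and generation theorems already used.
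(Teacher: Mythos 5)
Your proposal is correct and follows essentially the same route as the paper: testing the map against the compact generators coming from smooth projective varieties over $K$ (via Theorem \ref{main} and Theorem \ref{ayoubgen1}), identifying both sides with algebraic de Rham cohomology through Proposition \ref{rep} and the comparison theorems, and concluding by the classical base change isomorphism $H^i_{\dR}(X)\otimes_K L\cong H^i_{\dR}(X_L)$. The compactness and naturality points you flag are left implicit in the paper's (very terse) proof, so your write-up is if anything slightly more careful.
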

	
	\begin{proof}
		We can prove that the cone of the map $C$ is zero, which by Theorem \ref{ayoubgen1} amounts to prove that $\Hom(\RR l_*\Lambda(X^{\an})[-i],C)=0$ for all smooth projective varieties $X/K$ and all $i$. This follows from the following isomorphisms {\small\[\Hom(\RR l_*\Lambda(X^{\an})[-i],\Omega^\dagger_K\otimes_KL)\cong H^{i}_{\dR}(X)\otimes_KL\cong H^i_{\dR}(X_{L})\cong\Hom(\RR l_*\Lambda(X^{\an})[-i],\RR f_*\Omega^\dagger_{L}).\]}
	\end{proof}
	
	In the same spirit, 
	inspired by the work of Cisinski-D\'eglise \cite{cd-mw}, we can also obtain the  K\"unneth formula, generalizing \cite[Section 9.4]{gk-fin} where the discrete-valuation case is considered.  
	
	\begin{cor}Suppose $\Lambda=K$. 
		For any two  motives $M, N$ in $\RigDA_{\et,\gm}(K,\Lambda)$ we have
		\[
		H^n_{\odR}(M\otimes N)\cong\bigoplus_{p+q=n}H^p_{\odR}(M)\otimes H^q_{\odR}(N)
		\]
	\end{cor}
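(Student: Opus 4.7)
The plan is to replace $\Omega^\dagger$ by its rigid-analytic counterpart and reduce, via Ayoub's generator theorem (Corollary \ref{ayoubgen}), to the classical K\"unneth formula for algebraic de Rham cohomology. Setting $\tilde{\Omega}\colonequals\LL l^*\Omega^\dagger\in\RigDA_{\et}(K,K)$, the monoidal equivalence of Theorem \ref{main} lets me rewrite
\[
H^n_{\odR}(M)=\Hom_{\RigDA^\dagger}(\RR l_*M[-n],\Omega^\dagger)\cong\Hom_{\RigDA}(M[-n],\tilde{\Omega}),
\]
so the claim becomes $\Hom((M\otimes N)[-n],\tilde{\Omega})\cong\bigoplus_{p+q=n}\Hom(M[-p],\tilde{\Omega})\otimes_K\Hom(N[-q],\tilde{\Omega})$ for $M,N\in\RigDA_{\et,\gm}(K,K)$, which are strongly dualizable by Corollary \ref{ayoubgen}.

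The wedge product of differential forms promotes $\Omega^\dagger$ to a commutative algebra object, so $\tilde{\Omega}$ inherits a multiplication $\mu\colon\tilde{\Omega}\otimes\tilde{\Omega}\to\tilde{\Omega}$; combining $\mu$ with $(M\otimes N)^\vee\cong M^\vee\otimes N^\vee$ (available for dualizable $M,N$) yields a canonical K\"unneth morphism from the right-hand side to the left-hand side, and the task is to show it is an isomorphism. For this I would argue by a two-variable d\'evissage. Both sides, viewed as functors of $M$ with $N$ fixed (and symmetrically), are contravariant triangulated functors that commute with finite direct sums and direct summands (since $\otimes_K$ is exact over a field). By Corollary \ref{ayoubgen} the category $\RigDA_{\et,\gm}(K,K)$ is the pseudo-abelian triangulated subcategory generated by the motives $\Lambda(X^{\an})(d)$ with $X$ smooth projective over $K$ and $d\in\Z$; since the K\"unneth statement is bilinear and compatible with shifts and Tate twists, it suffices to check the case $M=\Lambda(X^{\an})$, $N=\Lambda(Y^{\an})$ for $X,Y$ smooth projective.

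In this base case, $M\otimes N\cong\Lambda((X\times Y)^{\an})$ since analytification is monoidal, and Proposition \ref{rep} identifies the three groups $H^*_{\odR}$ involved with the algebraic de Rham cohomologies of $X$, $Y$ and $X\times Y$. The classical K\"unneth isomorphism for smooth projective varieties over a characteristic zero field then concludes. The main obstacle I anticipate is verifying that the canonical K\"unneth map constructed motivically from $\mu$ agrees on this base case with the classical de Rham K\"unneth isomorphism; this compatibility should follow from the observation that the cup product on $H^*_{\dR}(X)$ is induced by the wedge product on $\Omega^\bullet_X$, which is precisely the structure encoded by $\mu$ via Proposition \ref{rep}.
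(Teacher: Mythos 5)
Your proposal is correct and follows essentially the same route as the paper: construct the K\"unneth map from the wedge-product multiplication on $\Omega^\dagger$ (viewed as a natural transformation of bi-functors on $\RigDA_{\et,\gm}(K,K)$), reduce via Corollary \ref{ayoubgen} to the generators $\Lambda(X^{\an})$ with $X$ smooth projective, and conclude by the classical K\"unneth formula for algebraic de Rham cohomology. The only cosmetic difference is that you route the construction of the comparison map through strong dualizability, whereas the paper builds it directly as $\Hom_\bullet(-,\LL l^*\Omega^\dagger)\otimes\Hom_\bullet(-,\LL l^*\Omega^\dagger)\Rightarrow\Hom_\bullet(-\otimes-,\LL l^*\Omega^\dagger)$, which needs no duality.
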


	\begin{proof}
		It suffices to prove that the cohomological realization functor $M\mapsto\Hom_\bullet(M,\LL l^*\Omega^\dagger)$ is monoidal. The wedge product induces a multiplication of complexes of presheaves  $\Omega^{\dagger}\otimes\Omega^{\dagger}\ra\Omega^{\dagger}$ and therefore it induces a multiplication also on $\Omega^{\dagger}$ as a motive. We then obtain a composed natural transformation of bi-functors $$\Hom_\bullet(-,\LL l^*\Omega^\dagger)\otimes\Hom_\bullet(-,\LL l^*\Omega^\dagger)\Rightarrow\Hom_\bullet(-\otimes-,\LL l^*\Omega^\dagger\otimes\LL l^*\Omega^\dagger)\Rightarrow\Hom_\bullet(-\otimes-,\LL l^*\Omega^\dagger)$$ and we can prove it is invertible by checking this  on the set of generators of $\RigDA_{\et,\gm}(K,\Lambda)$ formed by analytifications of smooth  projective algebraic varieties. The result then follows from the usual K\"unneth formula for algebraic de Rham cohomology \cite{kunneth}.
	\end{proof}

	\appendix
	
	\section{Dagger Spaces and Inverse Limits of Adic Spaces}

	In this section, we study the connection between the language of dagger algebras  with the theory of adic spaces developed by Huber \cite{huber2}.  Our aim is to show that a dagger algebra is a presentation of the compactification of $\Spa(A,A^\circ)$ as an inverse limit of strict inclusions. We do not claim much originality here, as the results are straightforward  reformulations of  \cite{berkovich}, \cite{gk-over} and \cite{huber}.

	We start by recalling the notion of compactification of Huber. In order to state it, it is crucial to use the language of adic spaces, which allows more flexibility for the choice of the ring of integral functions associated with a Banach algebra. 
	
	In this Appendix, $K$ is any complete non-archimedean field with respect to a valuation of rank $1$ having a pseudo-uniformizer $\pi$ and ring of valuation $K^\circ$. 
	We recall that all our rigid analytic varieties over the base field $K$ (and hence morphisms between them) are separated and taut, and that we denote by $\Spa R$ the rigid analytic space $\Spa(R,R^\circ)$ whenever $R$ is a Banach $K$-algebra.

	\begin{dfn}[{\cite[Lemma 1.3.10]{huber}}]
		A map $f\colon X\ra Y$ of quasi-separated adic spaces over $K$ locally (topologically) of finite type (see \cite[Paragraph 1.1.13 and Definition 1.2.1]{huber}) is \emph{partially} \emph{proper} if for any valuation ring $L^+$ in a field $L$ and any commutative diagram
		$$
		\xymatrix{
			\Spa(L,L^\circ)\ar[r]\ar[d]	& X\ar[d]^f\\
			\Spa(L,L^+)\ar[r]^-{y}	& Y
		}
		$$
		there exists a  unique lift $\Spa(L,L^+)\ra X$ of $y$.
	\end{dfn}
	
	\begin{exm}
		The space $\B^1$ is not partially proper over $K$. Consider the total order in $\R_{>0}\times\delta^\Z$ induced by putting $1<\delta<\R_{>1}$ and let $||\cdot||$ be the Gauss norm on $K[\tau]$. Consider the valuation ring $K(\tau)^+$ in $K(\tau)$ induced by the following valuation, taking values in $\R_{>0}\times\delta^\Z\cup\{0\}$: 
		$$
		|\cdot|_{\infty}\colon f=\sum a_i \tau^i\mapsto ||f||\cdot\delta^{\max\{i\colon |a_i|=||f||\}}.
		$$
		The associated valuation of rank $1$ is the Gauss norm on $K(\tau)$ which defines a point in $\B^1$. Nonetheless, there is no lift of $\Spa(K(\tau),K(\tau)^+)\ra\Spa K$ to $\B^1$ since $|\tau|_{\infty}=\delta>1$.
	\end{exm}
	
	\begin{dfn}[{\cite[Theorem 5.1.5]{huber}}]
		The \emph{universal compactification} of a map   of rigid analytic varieties $f\colon X\ra S$ is a factorization $X\stackrel{j}{\ra} X^{\cp}_f\stackrel{f'}{\ra} S$ of adic spaces such that $j$ is locally closed, $f'$ is partially proper and such that for any other factorization $X\stackrel{h}{\ra} Y\stackrel{g}{\ra} S$ with $g$ partially proper, there exists a unique map $i\colon X^{\cp}_f\ra Y$ making the following diagram commute:
		$$
		\xymatrix{
			& X^{\cp}_f\ar[dr]^{f'}\ar[dd]^{i}& \\
			X\ar[ur]^j\ar[dr]^{h} && S\\
			& Y\ar[ur]^{g}&
		}
		$$
		The universal compactification of $X\ra\Spa K$ will be simply called the \emph{compactification} of $X$ and denoted by $X^{\cp}$.
	\end{dfn}
	
	\begin{exm}
		The universal compactification of a map of affinoid rigid analytic varieties $\Spa S\ra\Spa R$ induced by a map $\phi\colon R\ra S$ is given by the affinoid (yet not rigid analytic!) space $\Spa(S,S^+)$ where $S^+$ is the integral closure in $S$ of the ring $\phi(R^\circ)+S^{\circ\circ}$. In particular, the compactification of $\Spa R$ is the space $\Spa(R,R^+)$ where $R^+$ is the minimal choice among rings of integral elements in $R$ over $K$, namely the integral closure of $K^\circ+R^{\circ\circ}$ in $R$. It contains $\Spa R$ as an open dense subset.
	\end{exm}
	
	\begin{dfn}
		Let $X\subset Y$ be an open immersion of rigid analytic varieties over a variety $S$. We write $X\Subset_S Y$ if the inclusion factors over the adic compactification of $X$ over $S$ (see \cite[Theorem 5.1.5]{huber}). In case $S=\Spa K$ we simply write $X\Subset Y$. 
	\end{dfn}
	
	\begin{rmk}
		Let $Y\ra X $ be an open immersion of rigid varieties over $S$. Then $Y\Subset_S X$ if and only if the compactification of $Y$over $S$ coincides with the compactification of $Y$ over $X$. 
	\end{rmk}

	\begin{dfn}\label{interiors}
		Let $f\colon X\ra S$ be a morphism of rigid analytic varieties over a field $K$.
		The \emph{interior} $\Int(X/S)$ [resp. the \emph{border} $\del(X/S)$] of $f$ is [the complementary of]  the union of its open supsbaces $U$ such that $U\Subset_S X$. If $S=\Spa K$ we simply write $\Int(X)$ [resp.  $\del(X)$].
	\end{dfn}
	
	We recall that the Berkovich space $X^{\Berk}$ associated with a rigid analytic (taut) variety $X$ is the universal Hausdorff quotient of the topological space underlying $X$ (see \cite[Theorem 2.24]{scholze}). In particular, there is a continuous quotient morphism $\Berk\colon X\ra X^{\Berk}$. Our notations coincide with the one of \cite{ayoub-rig} by means of the following  interpretation in terms of Tate and Berkovich spaces.

	\begin{prop}\label{berkint}
		Let $X\subset Y$ be an open immersion of rigid analytic varieties over a variety $S$.  Then $X\Subset_SY$ if and only if  $X^{\Berk}$ lies in the Berkovich interior of $Y^{\Berk}$ over $S^{\Berk}$. 
	\end{prop}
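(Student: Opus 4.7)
The plan is to reduce to the affinoid case and translate both sides into the same strict spectral inequality. Both the property $X\Subset_SY$ and the condition $X^{\Berk}\subset\Int(Y^{\Berk}/S^{\Berk})$ are local on $Y$ and on $S$, so I may assume $Y=\Spa B$ and $S=\Spa A$ are affinoid. Choose a presentation $A\langle T_1,\ldots,T_n\rangle\twoheadrightarrow B$. Since both conditions are hereditary when passing to rational subdomains of $X$, I may further assume $X$ is affinoid, say $X=\Spa B'$ with $B\to B'$ the map cutting out $X$.

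On the Berkovich side, Berkovich's definition of the relative interior (\cite[\S 2.5, \S 3.1]{berkovich}), transferred through the canonical continuous quotient $\Berk\colon Y\to Y^{\Berk}$, characterizes $\Int(Y^{\Berk}/S^{\Berk})$ as the set of $y$ for which $|T_i(y)|<1$ for all $i$, in some (equivalently, any) such presentation; hence the right-hand side of the proposition is the statement that $|T_i(x)|<1$ for every rank-$1$ continuous valuation $x$ of $B'$. On the Huber side, by the explicit description of the universal compactification of an affinoid morphism (\cite[\S 5.1]{huber}), $X^{cp/S}$ is the affinoid adic space $\Spa(B',C)$, where $C$ is the integral closure inside $B'$ of the subring generated by the images of $A^{\circ}$ and of $B'^{\circ\circ}$. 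The factorization $X\hookrightarrow X^{cp/S}\hookrightarrow Y$ demanded by $X\Subset_SY$ exists precisely when the images of the $T_i$'s lie in $C$; granting that these images are already power-bounded (they come from $A\langle T\rangle$), this membership reduces to the strict inequality $|T_i(x)|<1$ for every continuous valuation $x$ of $B'$, of arbitrary rank.

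The two formulations then coincide because every higher-rank continuous valuation on a $K$-affinoid is a horizontal specialization of a rank-$1$ continuous valuation having the same Berkovich image, and a strict inequality $|f|<1$ satisfied at the rank-$1$ generization persists under such specializations. Combined with the quasi-compactness of $X^{\Berk}$, which promotes pointwise strict inequalities to uniform ones and thereby recovers the power-bounded refinement needed for membership in $C$, this yields the desired equivalence. I expect the main obstacle to lie in the careful bookkeeping of the explicit shape of $X^{cp/S}$ in the \emph{relative} setting over $S$—the absolute case (over $\Spa K$) is standard, but over a base $S=\Spa A$ one must correctly combine the contribution of $A^\circ$ with the topological nilpotents of $B'$, and verify that ``membership in the integral closure $C$'' really does simplify, for power-bounded elements, to the uniform strict inequality required by the Berkovich characterisation.
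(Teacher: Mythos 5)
There is a genuine gap, and it sits exactly at the two ``strict inequality'' reformulations on which your whole reduction rests. On the Berkovich side, the criterion for the relative interior is that $y\in\Int(Y^{\Berk}/S^{\Berk})$ if and only if \emph{there exists some} epimorphism $A\{r_1^{-1}T_1,\ldots,r_n^{-1}T_n\}\twoheadrightarrow B$ with $|T_i(y)|<r_i$: the presentation is allowed to depend on the point, and the parenthetical ``equivalently, any'' in your proposal is false. Concretely, take $S=\Spa K$, $Y=\B^1=\Spa K\langle t\rangle$ with the presentation $T\mapsto t$, and $X=\{|t-1|\le|\pi|\}$. Every point of $X$ lies in the interior of $Y^{\Berk}$ (its reduction generates the residue field of $K$, hence is integral over it), and indeed $X\Subset Y$; yet $|T(x)|=|t(x)|=1$ at every point of $X$, so your fixed-presentation strict inequality fails even though both sides of the proposition hold. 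The Huber-side reformulation fails in the same way: the image of $T$ in $\mcO(X)$ is $1+\pi s$ (with $s$ the coordinate of $X$), which does lie in the ring of integral elements $C$ of the compactification of $X$ over $S$, but is not topologically nilpotent; the condition ``$|T_i(x)|<1$ for every continuous valuation of $B'$, of arbitrary rank'' characterizes precisely $B'^{\circ\circ}$, which is strictly smaller than $C$. So the step ``membership in $C$ reduces to the strict inequality'' is wrong, and the final compactness/specialization argument is built on these two incorrect reductions (the part of your argument identifying $X\Subset_SY$ with ``the images of the $T_i$ lie in $C$'' is fine, as is the localization to the affinoid case).

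The repair is to drop the coordinates and strict inequalities altogether and use Berkovich's \emph{integrality} form of the criterion, which is what the paper does: $X^{\Berk}$ lies in the interior of $Y^{\Berk}$ over $S^{\Berk}$ if and only if the image of $\mcO(Y)^\circ$ in $\mcO(X)^\circ/\mcO(X)^{\circ\circ}$ is integral over the image of $\mcO(S)^\circ$, i.e.\ if and only if $\mcO(Y)^\circ$ lands in the integral closure of $\mcO(S)^\circ+\mcO(X)^{\circ\circ}$ in $\mcO(X)$. This last ring is literally Huber's ring of integral elements of the universal compactification of $X$ over $S$, so the factorization defining $X\Subset_SY$ exists exactly when this integrality holds; no choice of presentation, no strictness, and no compactness promotion are needed.
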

	
	\begin{proof}
		By \cite[Proposition 2.5.17]{berkovich}	we can assume that all varieties are affinoid $X=\Spa(B,B^\circ)$, $Y=\Spa(A,A^\circ)$, $S=\Spa(C,C^\circ)$. By \cite[Proposition 2.5.2(d) and Proposition 2.5.9]{berkovich} $X$ lies in $\Int(Y/S)$ if and only if the image of $A^\circ$  in $B^\circ/B^{\circ\circ}$ is integral over the image of $C^\circ$. This amounts to say that $A^\circ$ is mapped in the  integral closure $B^+$ of  $C^\circ+B^{\circ\circ}$ in $B$ which is precisely the ring of integers of the compactification $\Spa(B,B^+)$ of $X$ over $S$.
	\end{proof}
	
	We then immediately obtain the following  result.
	\begin{cor}
		Let $f\colon X\ra S$ be a morphism of rigid analytic varieties over a field $K$. The space $\Int(X/S) $ is the inverse image via $X\ra X^{\Berk}$ of the Berkovich interior of $X^{\Berk}$ over $S^{\Berk}$.
	\end{cor}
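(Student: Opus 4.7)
The plan is to derive this pointwise statement directly from Proposition \ref{berkint}, using the surjectivity of the quotient map $\Berk\colon X\ra X^{\Berk}$ together with the openness of the Berkovich interior (which for brevity I will denote by $I\subset X^{\Berk}$).

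I would first dispatch the inclusion $\Int(X/S)\subset \Berk^{-1}(I)$, which is immediate from the definitions. Given $x\in\Int(X/S)$, unwind Definition \ref{interiors} to produce an open subspace $U\subset X$ containing $x$ with $U\Subset_S X$; Proposition \ref{berkint} then yields $U^{\Berk}\subset I$, and since $\Berk(x)\in U^{\Berk}$ we conclude $x\in\Berk^{-1}(I)$.

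The reverse inclusion needs slightly more care. Suppose $\Berk(x)\in I$. The Berkovich interior is by construction a union of relatively compact Berkovich affinoid domains, hence open in $X^{\Berk}$. I would therefore select a Berkovich affinoid open neighborhood $V$ of $\Berk(x)$ with $V\subset I$, and set $U\colonequals\Berk^{-1}(V)$. This $U$ is an open subspace of $X$ containing $x$, whose Berkovich quotient is identified with $V$ via $\Berk$, so Proposition \ref{berkint}, applied in the converse direction, promotes $U^{\Berk}=V\subset I$ to $U\Subset_S X$, placing $x$ in $U\subset\Int(X/S)$.

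The only mildly technical point, and the one I would expect to require the most care in a full writeup, is the compatibility $U^{\Berk}=V$ for the pullback $U=\Berk^{-1}(V)$, together with the fact that pulling back a Berkovich affinoid open under $\Berk$ yields a genuine open subvariety of the adic space $X$ rather than just a subset of its underlying set. Both follow from the quotient-map property of $\Berk\colon X\ra X^{\Berk}$ and the standard dictionary between Huber-affinoid and Berkovich-affinoid subdomains; modulo these routine compatibilities, the corollary is a pointwise restatement of Proposition \ref{berkint}.
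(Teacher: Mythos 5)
Your proof is correct and is essentially the argument the paper has in mind: the corollary is stated there as an immediate consequence of Proposition \ref{berkint}, and your two-inclusion unwinding (using the openness of the Berkovich interior and the quotient property of $X\ra X^{\Berk}$) is exactly the intended reasoning. One cosmetic simplification: Berkovich affinoid domains are compact rather than open, so no affinoid $V$ exists or is needed — taking $U=\Berk^{-1}(I)$ (or the preimage of any open $V\subset I$) suffices, since to invoke Proposition \ref{berkint} you only need that the image of $U^{\Berk}$ in $X^{\Berk}$ lands in $I$, which is automatic, rather than the full identification $U^{\Berk}=V$.
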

	
	We also recall the following fundamental formula of Berkovich.
	
	\begin{dfn}
		A rigid analytic variety $X$ is \emph{good} if for any point $x\in X$ there exists an open subaffinoid of $X$ containing the closure of $\{x\}$ in $X$ (see \cite[Proposition 8.3.7]{huber}).
	\end{dfn}
	
	\begin{cor}[{\cite[Proposition 2.5.8(iii)]{berkovich}}]\label{berkform}
		Let $f\colon X\ra Y$ and $g\colon Y\ra S$ be two morphisms of {good} rigid analytic varieties over a field $K$. It holds
		$$\Int(X/S)=\Int(X/Y)\cap f^{-1}\Int(Y/S) .$$
	\end{cor}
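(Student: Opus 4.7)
The plan is to reduce the formula to the corresponding statement in Berkovich's book via the dictionary between adic interiors and Berkovich interiors established in Proposition \ref{berkint} and the corollary following it.

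First, I would rewrite each of the three interiors appearing in the formula as preimages of the associated Berkovich interiors via the canonical continuous maps $\Berk\colon X\to X^{\Berk}$, $\Berk\colon Y\to Y^{\Berk}$, and $\Berk\colon S\to S^{\Berk}$. For the absolute interiors this is exactly the content of the corollary after Proposition \ref{berkint}; for $\Int(X/Y)$ the same argument applies verbatim, replacing the base $\Spa K$ with $Y$. Goodness of all three varieties ensures that the Berkovich quotient is well-behaved and that Berkovich's interior can be applied.

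Next, I would invoke the fact that the Berkovich-space construction is functorial, so that $f$ and $g$ induce continuous maps $f^{\Berk}\colon X^{\Berk}\to Y^{\Berk}$ and $g^{\Berk}\colon Y^{\Berk}\to S^{\Berk}$ which commute (up to the natural transformation $\Berk$) with $f$ and $g$. Applying \cite[Proposition~2.5.8(iii)]{berkovich} to $f^{\Berk}$ and $g^{\Berk}$ then yields the identity
$$\Int(X^{\Berk}/S^{\Berk})=\Int(X^{\Berk}/Y^{\Berk})\cap (f^{\Berk})^{-1}\Int(Y^{\Berk}/S^{\Berk})$$
between subsets of $X^{\Berk}$.

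Finally, I would pull this equality back along $\Berk\colon X\to X^{\Berk}$. The left-hand side becomes $\Int(X/S)$ by the corollary after Proposition \ref{berkint}; the first factor on the right becomes $\Int(X/Y)$ for the same reason; and for the second factor, the commutativity $\Berk\circ f=f^{\Berk}\circ\Berk$ identifies the preimage of $(f^{\Berk})^{-1}\Int(Y^{\Berk}/S^{\Berk})$ with $f^{-1}$ applied to the preimage in $Y$ of $\Int(Y^{\Berk}/S^{\Berk})$, which is $f^{-1}\Int(Y/S)$.

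The main potential obstacle is purely bookkeeping: making sure that the good-ness hypothesis in our sense matches the good-ness hypothesis Berkovich requires for Proposition~2.5.8(iii), and that the adic and Berkovich notions of $\Int(\cdot/\cdot)$ are compatible not only in the absolute case (treated in the corollary after Proposition \ref{berkint}) but also in the relative case, which amounts to repeating that corollary with an arbitrary good base. Both verifications are routine, so the proof is essentially a dictionary translation rather than a new argument.
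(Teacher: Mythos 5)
Your proposal is correct and is exactly the argument the paper intends: Corollary \ref{berkform} is stated as an immediate consequence of Berkovich's Proposition 2.5.8(iii) via the dictionary of Proposition \ref{berkint} and the corollary following it (which is already stated for an arbitrary base $S$, so your relative-case worry is already covered). Your translation through $\Berk\colon X\to X^{\Berk}$ and functoriality is precisely the intended, essentially tautological, reduction.
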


	\begin{rmk}
		We recall that a morphism $X\ra Y$ of rigid analytic spaces is \emph{partially proper} if its border is empty, and it is \emph{proper} if it is partially proper and quasi-compact. Thanks to the properties above, these definitions coincide with Berkovich's and with Huber's. The results of \cite{temkin1} and \cite{temkin2} also show that the notion of properness  coincides with Kiehl's (see also \cite[Remark 1.3.19]{huber} for the discrete-valuation case).
	\end{rmk}

	\begin{prop}[{\cite[Proposition 2.1.16]{ayoub-rig}}]\label{cofinal}
		Let $X=U(f_i/g)$ be a rational subvariety of the affinoid space $X_1$. The sequence $X_h=U(\pi f_i^h/g^h) $ of rational subspaces of $X_1$ satisfies $X\Subset_{X_1}X_{h+1}\Subset_{X_1}X_h$ and is coinitial with respect to $\Subset_{X_1}$ among rational subspaces greater than $X$.  
	\end{prop}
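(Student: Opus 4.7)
The approach is to translate both assertions, via Proposition~\ref{berkint}, into statements about the Berkovich interior inside the compact Hausdorff space $X_1^{\Berk}$, where one may argue directly with real-valued valuations.

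For the chain $X\Subset_{X_1}X_{h+1}\Subset_{X_1}X_h$ I would first check the corresponding statements on Berkovich interiors. Each rational subspace $X_m=U(\pi f_i^m/g^m)$ is defined on $X_1^{\Berk}$ by the closed inequalities $|\pi||f_i|^m\leq|g|^m$, and its Berkovich interior inside $X_1^{\Berk}$ is precisely the locus where these hold strictly. On $X^{\Berk}$ one has $|f_i|\leq|g|$, and since $f_i$ and $g$ generate the unit ideal on $X_1$ necessarily $|g(x)|>0$; consequently
\[|\pi||f_i(x)|^{h+1}\leq|\pi||g(x)|^{h+1}<|g(x)|^{h+1},\]
placing $X^{\Berk}$ in the Berkovich interior of $X_{h+1}^{\Berk}$ over $X_1^{\Berk}$. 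An analogous computation, obtained by raising the defining inequalities of $X_{h+1}$ to the power $h/(h+1)$ and noting $|\pi|^{1/(h+1)}<1$, shows that $X_{h+1}^{\Berk}$ lies in the Berkovich interior of $X_h^{\Berk}$ over $X_1^{\Berk}$. Proposition~\ref{berkint} then converts these into the desired strict inclusions.

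For the coinitiality, let $Y=U(\phi_j/\psi)$ be a rational subspace of $X_1$ with $X\Subset_{X_1}Y$. First I would show $\bigcap_h X_h^{\Berk}=X^{\Berk}$: for a Berkovich (hence real-valued) point $x\in\bigcap_h X_h^{\Berk}$ one has $|g(x)|>0$ (else every $|f_i(x)|=0$, contradicting the unit-ideal assumption), and the inequalities $|\pi|^{1/h}|f_i(x)|\leq|g(x)|$ hold for every $h$; passing to the limit in $\R$, where $|\pi|^{1/h}\to1$, yields $x\in X^{\Berk}$. By Proposition~\ref{berkint} the hypothesis $X\Subset_{X_1}Y$ exhibits the Berkovich interior $U\colonequals \mathrm{Int}(Y^{\Berk}/X_1^{\Berk})$ as an open neighborhood of $X^{\Berk}$ contained in $Y^{\Berk}$. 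The decreasing sequence of compact sets $(X_1^{\Berk}\setminus U)\cap X_h^{\Berk}$ then has empty intersection, and Cantor's intersection theorem (applied in the same spirit as in the proof of Proposition~\ref{roots3}) produces an index $h$ with $X_h^{\Berk}\subset U\subset Y^{\Berk}$. Since rational subdomains of $X_1$ are pulled back from their Berkovich images under $\Berk\colon X_1\to X_1^{\Berk}$, this finally gives $X_h\subset Y$.

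The main subtlety I anticipate concerns the identification, in the first step, of the Berkovich interior of a rational subdomain with the locus of strict inequalities; this must be verified by unwinding the characterization used in the proof of Proposition~\ref{berkint} (via \cite[Proposition~2.5.2]{berkovich}) in the presence of the denominator $g$, and by recording that $|g|$ is bounded away from zero on the relevant compact piece of $X_1^{\Berk}$. Once this is granted, both steps reduce to elementary inequality manipulations and the compactness of the Berkovich space, and no new ingredient beyond the tools already developed in the appendix is required.
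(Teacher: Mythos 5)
The paper itself does not prove this proposition: it is quoted verbatim from \cite[Proposition 2.1.16]{ayoub-rig}. So your argument is necessarily independent of the ``paper's proof''; its architecture --- translating $\Subset_{X_1}$ into Berkovich interiors via Proposition~\ref{berkint}, elementary manipulation of real-valued inequalities, and a Cantor-type compactness argument exactly in the spirit of Proposition~\ref{roots3} --- is sound and fits the toolkit of the Appendix. Two points in the write-up are nevertheless wrong as stated, one harmless and one that needs repair. The harmless one: the Berkovich interior of a rational subdomain is \emph{not} in general ``precisely'' the strict-inequality locus (for $U(t/1)$ inside $\Spa K\langle t\rangle$ the interior is everything while the strict locus is not); but you only use the containment of the strict locus in $\Int(X_m^{\Berk}/X_1^{\Berk})$, which does hold, since the strict locus is open and contained in $X_m^{\Berk}$ and the relative interior of an affinoid domain coincides with its topological interior (\cite{berkovich}).

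The genuine flaw is the last step: the claim that rational subdomains of $X_1$ are preimages of their Berkovich images under the separation map $X_1\ra X_1^{\Berk}$ is false. For instance, for $U(\pi/t)\subset\B^1$ the rank-two point whose valuation gives $t$ an absolute value infinitesimally smaller than $|\pi|$ does not lie in $U(\pi/t)$, although its maximal rank-one generization satisfies $|t|=|\pi|$ and hence lies in the Berkovich image; loci cut out by non-strict inequalities are not stable under the relevant specializations, so ``pulled back'' fails. The implication you actually need --- that $X_h^{\Berk}\subset Y^{\Berk}$ forces $X_h\subset Y$ (and likewise the inclusions $X\subset X_{h+1}\subset X_h$, which you use tacitly before invoking Proposition~\ref{berkint}) --- is still true, but for a different reason: classical rigid points are rank-one, hence governed by the Berkovich containment, and two quasi-compact open subsets of an affinoid adic space with the same classical points coincide (density of rigid points in the constructible topology, equivalently Huber's identification of quasi-compact opens of the adic space with quasi-compact admissible opens of the rigid space, see \cite{huber}). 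Alternatively, the chain $X\subset X_{h+1}\subset X_h$ can be checked directly on adic points by a short case distinction on whether $|f_i(x)|\leq|g(x)|$, avoiding Berkovich spaces for that part entirely. With one of these substitutions for the incorrect pull-back claim, your proof goes through.
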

	
	We remark that the category of adic spaces doesn't have all inverse limits. Nonetheless, there is a notion of \emph{being similar to the inverse limit} for an object $X$ having maps towards a directed system $\{X_{i+1}{\ra} X_{i}\}$ given by Huber (see \cite[Section 2.4]{huber}) and denoted by $X\sim\varprojlim X_h$. In this case, the \'etale topos of $X$ is the filtered limit topos of those associated with $X_i$ (\cite[Proposition 2.4.4]{huber}).
	
	\begin{prop}\label{limit}
		Let $X=\Spa \hat{R}$ be a rational subspace of $X_1=\Spa \hat{R}_1$ and let $X_h=\Spa \hat{R}_h$ be a   sequence of rational affinoids in $X_1$ totally ordered with respect to $\Subset_{X_1}$ and coinitial among the opens $W$ with $X\Subset_{X_1}W$. 
		Let $R$ [resp. $R^{+}$] be $\varinjlim \hat{R}_h$ [resp. $\varinjlim \hat{R}_h^\circ$]. If we endow $R$ with the topology induced by $\hat{R}$ 
		then $\Spa(R,R^{+})$ is an affinoid adic space with global sections equal to $\hat{R}$ and
		\[
		\Spa(R,R^{+})\sim
		\varprojlim_h\Spa \hat{R}_h.\] 
		Moreover if $\Spa(S,S^+)$ is an affinoid adic space over $K$ with $S^+ $ bounded is $S$ then 
		\[\Hom(\Spa(S,S^+),\Spa(R,R^{+}))\cong\varprojlim_h\Hom(\Spa(S,S^+),\Spa \hat{R}_h).\]
	\end{prop}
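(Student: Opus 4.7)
The plan is to (a) verify that $(R, R^+)$ is a Huber pair with completion $(\hat R, \overline{R^+})$, (b) deduce that $\Spa(R, R^+)$ is an affinoid adic space with global sections $\hat R$, (c) check Huber's criterion for the similarity with $\varprojlim X_h$, and (d) use the resulting inverse-limit description to obtain the universal mapping property for $(S, S^+)$ with $S^+$ bounded.

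The heart of step (a) is to show that $R^+$ is open in $R$ for the subspace topology from $\hat R$. My approach would be to prove the inclusion $\pi \hat R^\circ \cap R \subset R^+$. Given $g$ in that intersection, set $f = g/\pi$, which lies in $\hat R^\circ \cap R$ since $\pi^{-1} \in K \subset R$. Pick $h_0$ with $f \in \hat R_{h_0}$; then in $X_{h_0}$ one has $\hat X \subset U(f/1) \Subset_{X_{h_0}} U(\pi f/1)$ by Proposition \ref{cofinal}, so the coinitiality hypothesis on the sequence $\{X_h\}$ (with respect to $\Subset_{X_1}$-neighbourhoods of $\hat X$) yields some $X_h \subset U(\pi f/1)$, i.e.\ $g = \pi f \in \hat R_h^\circ \subset R^+$. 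This estimate also shows that $\{\pi^n R^+\}_n$ is a fundamental system of neighbourhoods of zero in $R$. Power-boundedness of $R^+$ is immediate from $R^+ \subset \hat R^\circ \cap R$, and integral closedness follows by shifting indices: an integral equation of some $f \in R$ over $R^+$ fits into a single $\hat R_h$, where $\hat R_h^\circ$ is integrally closed, forcing $f \in \hat R_h^\circ \subset R^+$.

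For step (b), density of each $\hat R_h$ in $\hat R$ gives density of $R$, so the completion of $(R, R^+)$ is $(\hat R, \overline{R^+})$; Huber's identification then yields that $\Spa(R, R^+) = \Spa(\hat R, \overline{R^+})$ is an affinoid adic space with global sections $\hat R$, since $\hat R$ is topologically of finite type over $K$. For step (c), the density condition in \cite[Definition 2.4.2]{huber} is immediate from $R = \varinjlim \hat R_h$ dense in $\hat R$. For the homeomorphism $|\Spa(R, R^+)| \to \varprojlim |X_h|$ I would identify points on both sides with continuous valuations: a compatible system of continuous valuations on the $(\hat R_h, \hat R_h^\circ)$ descends to a valuation on $R$ bounded by $1$ on $R^+$, which extends uniquely by continuity to $\hat R$ bounded by $1$ on $\overline{R^+}$; conversely any point of $\Spa(\hat R, \overline{R^+})$ restricts to the $\hat R_h$. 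The topological identification then follows because each rational subset of $\Spa(R, R^+)$ is cut out by finitely many functions, which already live in some $\hat R_h$.

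Finally for step (d), a compatible family $\varphi_h \colon (\hat R_h, \hat R_h^\circ) \to (S, S^+)$ assembles into a ring map $\varphi \colon R \to S$ taking $R^+$ into $S^+$, and the inclusion $\varphi(\pi^n R^+) \subset \pi^n S^+$ for all $n$ establishes continuity since $\{\pi^n S^+\}$ is a neighbourhood basis of zero in $S$ by boundedness of $S^+$, and $\{\pi^n R^+\}$ is one in $R$ by step (a). Conversely a morphism $(R, R^+) \to (S, S^+)$ restricts through the continuous inclusions $\hat R_h \hookrightarrow R$ to produce the required compatible family. The main obstacle throughout is the discrepancy between the colimit topology on $R = \varinjlim \hat R_h$ and the subspace topology from $\hat R$; the openness estimate in step (a) is precisely the bridge that aligns them and underpins both the adic space structure and the universal property.
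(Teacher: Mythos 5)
Most of your argument follows the paper's own route: the chain $\pi(\hat{R}^\circ\cap R)\subset R^{+}\subset\hat{R}^\circ\cap R$, obtained from $U(f/1)\Subset U(\pi f/1)$ plus coinitiality, is exactly the paper's key step (there it is recycled from the proof of Proposition \ref{daggerballrep}), openness/integral closedness and the identification of the completion are handled the same way, and your direct assembly of compatible families for the Hom statement is a harmless variant of the paper's appeal to the fact that morphisms of affinoid adic spaces with bounded rings of integral elements are determined by the $K^\circ$-algebra maps on those rings.

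There is, however, a genuine gap in your step (b): you justify the density of $R$ in $\hat{R}$ --- which is precisely the assertion ``global sections equal to $\hat{R}$'', since the completion of $R$ for the subspace topology is its closure in $\hat{R}$ --- by the claim that each $\hat{R}_h$ is dense in $\hat{R}$. That claim is false under the stated hypotheses: restriction to a rational subdomain need not have dense image (for the circle $X=U(1/T)$ inside $X_1=\B^1$ the map $K\langle T\rangle\ra K\langle T,T^{-1}\rangle$ is not dense, as $||T^{-1}-g||\geq 1$ for every $g\in K\langle T\rangle$), and nothing in the hypotheses prevents $X_1$ itself from being a member of the family, since $X_h\Subset_{X_1}X_1$ always holds (cf.\ Remark \ref{dagislim2}, where $X_1$ is indeed a member in the dagger presentations). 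What is true is only that the colimit $R=\varinjlim\hat{R}_h$ is dense, and this is exactly where the paper's one explicit computation takes place: by Proposition \ref{cofinal} one may replace the given family by the mutually coinitial family $X_h=U(\pi f_i^h/g^h)$, and then the polynomials in the $\upsilon_i=f_i/g$ with coefficients in $\hat{R}_1$ are dense in $\hat{R}=\hat{R}_1\langle\underline{\upsilon}\rangle/(g\upsilon_i-f_i)$ while already lying in the image of the sections on a strict neighbourhood of $X$, hence in $R$. You need to insert this (or an equivalent) argument. A second, minor point: in step (a) you only establish $U(f/1)\Subset_{X_{h_0}}U(\pi f/1)$, whereas the coinitiality hypothesis concerns $\Subset_{X_1}$; to apply it you should either shift indices (as the paper implicitly does) or combine $X\Subset_{X_1}X_{h_0}$ with the composition formula for interiors (Corollary \ref{berkform}) to upgrade to $X\Subset_{X_1}U(\pi f/1)$.
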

	
	\begin{proof}
		By Proposition \ref{cofinal} we can assume that $X=U(f_i/g)$ and $X_h=U(\pi(f_i^h/g^h)) $ as subspaces of $X_1$. 
		We first observe that $R$ is dense in $\hat{R}$. Indeed, elements in $\hat{R}_1\langle \upsilon\rangle/(g\upsilon-f)$ having a polynomial as representative are dense, so that in particular the image of $\hat{R}_1\langle \pi\upsilon\rangle$ which is included in $R$   is also dense.
		
		We now claim that the  ring $R^{+}$ is open and bounded in $R$. Since $\hat{R}$ is reduced, then $\hat{R}^\circ$ is bounded and the topology on $R$ is induced by the ring of definition $R\cap \hat{R}^\circ$. 
		We already proved in Proposition \ref{daggerballrep} the chain of inclusions
		\[
		\pi(R\cap \hat{R}^\circ)\subset R^{+}\subset R\cap \hat{R}^\circ
		\]
		and therefore our claim. 
		
		Since $R^{+}$ is open and integrally closed in $R$ the pair $(R,R^{+})$ is an affinoid pair. By what we proved above and \cite[Lemma 7.5.3]{fvdp} its completion coincides with the pair  $(\hat{R},\hat{R}^+)$ where $\hat{R}^+$ is the $\pi$-adic completion of $R^{+}$. As $\Spa \hat{R}$ is adic (that is,  the structure presheaf is a sheaf) then also  $\Spa(\hat{R},\hat{R}^+)=\Spa(R,R^{+})$ is adic (see \cite[Theorem 2.2]{huber2}). 
		
		Maps of affinoid spaces $\Spa(T,T^+)\ra\Spa(S,S^+)$ over $K$ for which $S^+$ and $T^+$ are bounded are uniquely determined by the maps of abstract $K^\circ$-algebras $S^+\ra T^+$. The last isomorphism follows then from our definitions and \cite[Proposition 2.1(i)]{huber2}. If we apply it to spectra of valued fields $\Spa(L,L^+)$ we deduce in particular $|\Spa(R,R^{+})|\cong\varprojlim_h|\Spa \hat{R}_h|$ and therefore $\Spa(R,R^{+})\sim\varprojlim_h\Spa \hat{R}_h$.
	\end{proof}

	We warn the reader that the completion of $R^{+}$ may vary among the rings of integral elements of $R$, as the next examples show. 
	
	\begin{exm}\label{badex}
		If we take $X=X_h=X_1$  
		we obtain $\Spa(R,R^{+})=\Spa(\hat{R},\hat{R}^\circ)$.
	\end{exm}
	
	\begin{exm}
		Suppose that $X_2\Subset X_1$. By Proposition \ref{berkform} we deduce $X_{h+1}\Subset X_h$ 
		and therefore $\mcO^\circ(X_h)\subset\overline{K^\circ+\mcO^{\circ\circ}(X_{h+1})}$ so that $R^+$ is contained in the algebraic closure of $K^\circ+\hat{R}^{\circ\circ}$ in $\hat{R}$. We conclude that $\Spa(R,R^+)$ is the compactification of $X$ over $K$.  
	\end{exm}
	
	We now make specific examples of this last situation.
	
	\begin{exm}\label{daggerball}
		Consider the rational inclusion $X=\B^n=\Spa K\langle\underline{\tau}\rangle\ra\Spa K\langle\pi\underline{\tau}\rangle=X_1\cong\B^n$ and let $X_h$ be the rational space $U(\pi^{1/h}\underline{\tau})\colonequals U\left((\pi\underline{\tau})^{h}/\pi^{h-1}\right) $ of $X_1$. The sequence $X_{h+1}\Subset_{X_1}X_h$ is coinitial among opens $W$ such that $X\Subset_{X_1}W$ and $X_{h+1}\Subset X_h$. 
		Moreover, $R$ coincides with $K\langle\underline{\tau}\rangle^\dagger$.
	\end{exm}
	
	We remark that Proposition \ref{limit} applied to the example \ref{daggerball} generalizes the claim at the end of \cite[Example 7.58]{wedhorn}. 
	This last example can be extended to the following situation. 
	
	\begin{exm}\label{daggerrat}
		Consider a rational inclusion $X=X_1(f_1,\ldots,f_m/g)\Subset X_1$ of affinoid rigid spaces.  By the proofs of \cite[Proposition 2.5.2, Proposition 2.5.9]{berkovich} we can suppose that there are presentations $$\mcO(X_1)=K\langle \rho_1^{-1}\tau_1,\ldots,\rho_n^{-n}\tau_n\rangle/I$$ 
		$$\mcO(X)=K\langle \pi^{-1}\rho_1^{-1}\tau_1,\ldots,\pi^{-1}\rho_n^{-1}\tau_n,\upsilon_1,\ldots,\upsilon_m\rangle/((\upsilon_i g-f_i)+I)$$ 
		with $\rho_i\in\sqrt{K^\times}$. 
		We then define $X_h$ to be the rational subset of $X_1$ with 
		$$\mcO(X_h)=K\langle \pi^{\frac{1}{h}-1}\rho_1^{-1}\tau_1,\ldots,\pi^{\frac{1}{h}-1}\rho_n^{-1}\tau_n,\pi^{\frac{1}{h}}\upsilon_1,\ldots,\pi^{\frac{1}{h}}\upsilon_m\rangle/((\upsilon_i g-f_i)+I).$$ 
		The sequence $X_{h+1}\Subset X_h$  is coinitial among opens $W$ such that $X\Subset_{X_1}W$. 
		We also obtain 
		$$R=K\langle (\pi\rho_1)^{-1}\tau_1,\ldots,(\pi\rho_n)^{-1}\tau_n,\upsilon_1,\ldots,\upsilon_m\rangle^\dagger\!/((\upsilon_i g-f_i)+I)$$ which is a dagger algebra. 
	\end{exm}

	We are then inclined to make the following definition.

	\begin{dfn}
		Fix an affinoid rigid space $X$. A  \emph{presentation of a  dagger structure on $X$} is a pro-affinoid variety $\varprojlim X_h$ where $X$ and all $X_h$ are rational subspaces of $X_1$, such that 
		$X\Subset X_{h+1}\Subset X_h$  
		and the system is coinitial among rational subsets of $X_1$ containing $X$ in their interior.    A \emph{morphism} of presentations between $\varprojlim X_h$ and $\varprojlim Y_k$ is a morphism of pro-objects, that is,  an element of $\varprojlim_k\varinjlim_h\Hom(X_h,Y_k)$.  
	\end{dfn}
	
	\begin{exm}
		The dagger poly-disc $\B^{n\dagger}$ has the presentation $\varprojlim X_h$ described in  Example \ref{daggerball}. 
	\end{exm}
	
	\begin{rmk}
		The system $X_h$ of \ref{badex} is not a presentation of a dagger structure on $X$ since $X_{h+1}$ is not contained in the interior of$X_h$.
	\end{rmk}

	We summarize the previous discussion in the following proposition, drawing the link between the definition of dagger algebras and the language of (weak) inverse limits of adic spaces due to Huber.
	
	\begin{prop}\label{moraffdag}
		Let $\hat{X}=\Spa(\hat{R},\hat{R}^\circ)$,  be an affinoid space and let $\varprojlim {X}_h$ be a presentation of a dagger structure on $\hat{X}$.
		\begin{enumerate}
			\item $\varinjlim \mcO( {X}_h)$ is a  dagger algebra $R$ dense in $\hat{R}$. 
			\item\label{moraffdag1} The functor $\varprojlim {X}_h\mapsto \Spa^\dagger R$ induces an equivalence of categories between  dagger affinoid spaces $\Aff^\dagger$ as introduced in Definition \ref{defdag} and their presentations.
			\item  $\hat{X}^{\cp}=\Spa(\hat{R},\hat{R}^+)\cong\Spa(R,R^{+})\sim\varprojlim \hat{X}_h$ where $\hat{R}^+$ is the integral closure of ${K^\circ+\hat{R}^{\circ\circ}}$ in $\hat{R}$ and $R^+$ is $\varinjlim\hat{R}_h^\circ$.
			\item If $\Spa(T,T^+)$ is an affinoid adic space with $T^+ $ bounded then 
			\[\Hom(\Spa(T,T^+),\Spa(R,R^{+}))\cong\varprojlim_h\Hom(\Spa(T,T^+),\Spa \hat{R}_h).\]
		\end{enumerate}
	\end{prop}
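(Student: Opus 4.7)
The plan is to verify each of the four assertions in order, leveraging as much as possible the technical work already done in Proposition \ref{limit} and the examples surrounding it.

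First I would dispatch assertion (1). Because the system $\{X_h\}$ is coinitial among rational opens of $X_1$ containing $X$ in their interior, I can replace it by any other such coinitial system without changing $\varinjlim\mcO(X_h)$. Choosing the specific cofinal sequence constructed in Example \ref{daggerrat} (combined with Proposition \ref{cofinal} to see it is indeed coinitial), the colimit is manifestly a dagger algebra of the form $K\langle(\pi\rho_1)^{-1}\tau_1,\ldots,(\pi\rho_n)^{-1}\tau_n,\upsilon_1,\ldots,\upsilon_m\rangle^\dagger/J$. Density of $R$ in $\hat{R}$ is already shown in the first paragraph of the proof of Proposition \ref{limit}.

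Next I would deduce assertions (3) and (4) essentially from Proposition \ref{limit}. The content of Proposition \ref{limit} already gives $\Spa(R,R^{+})\sim\varprojlim \hat{X}_h$ together with the $\Hom$-formula of (4), so what remains is the identification $\Spa(\hat{R},\hat{R}^+)\cong\Spa(R,R^{+})$, i.e.\ that the closure of $R^+$ in $\hat{R}$ equals the integral closure $\hat{R}^+$ of $K^\circ+\hat{R}^{\circ\circ}$. One inclusion uses the hypothesis $X\Subset X_{h+1}\Subset X_h$: applying Corollary \ref{berkform} to the composition $X\hookrightarrow X_{h+1}\hookrightarrow X_h\to\Spa K$ shows $X_{h+1}\Subset X_h$ holds absolutely (not merely inside $X_1$), so that $\mcO^\circ(X_h)$ lies in the integral closure of $K^\circ+\mcO^{\circ\circ}(X_{h+1})$ in $\mcO(X_{h+1})$; taking $\varinjlim$ gives $R^+\subset\hat{R}^+$. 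The reverse inclusion follows from the universal property of the compactification: the tilde-limit $\Spa(R,R^+)$ is partially proper over $K$ (since each transition $X_{h+1}\Subset X_h$ is a strict inclusion, by the valuative criterion \cite[Lemma 1.3.10]{huber} any valuative lift factors uniquely through the limit), so the canonical map $\hat{X}\to\Spa(R,R^+)$ must factor through $\hat{X}^{\cp}$, forcing $\hat{R}^+\subset\overline{R^+}$.

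Finally I would prove (2). For essential surjectivity, given a dagger algebra $R\cong K\langle\underline{\tau}\rangle^\dagger/I$ with completion $\hat{R}$, fix any rational presentation $\Spa\hat{R}=X_1(f_1,\ldots,f_m/g)$ inside an ambient polydisc $\Spa K\langle\rho^{-1}\underline{\tau}\rangle$ and form the cofinal system $X_h$ of Example \ref{daggerrat}; by (1) this is a presentation whose associated dagger algebra is canonically isomorphic to $R$. For fully faithfulness, a morphism of presentations $\varprojlim X'_k\to\varprojlim X_h$ is by definition an element of $\varprojlim_h\varinjlim_k\Hom(X'_k,X_h)$, which by (4) applied to $(T,T^+)=(\hat{R}',R'^{+})$ coincides with $\Hom(\Spa(R',R'^{+}),\Spa(R,R^{+}))$, that is, with continuous $K$-algebra maps $\hat{R}\to\hat{R}'$ sending $R^+$ into $R'^{+}$; using (3) and the remark after Definition \ref{defdag}, these are precisely morphisms of dagger algebras $R\to R'$.

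The main obstacle will be the identification of the closure of $R^{+}$ with $\hat{R}^+$ in step two, because the boundedness and openness of $R^+$ in $R$ (already recorded in Proposition \ref{limit}) do not by themselves pin down which ring of integral elements one obtains, and both the strictness $X_{h+1}\Subset X_h$ and the partial properness of the tilde-limit are required in tandem.
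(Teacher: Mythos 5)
Your handling of (1), of the first inclusion in (3), of (4), and (modulo some imprecision about the ambient space, which is an affinoid rather than a polydisc) of essential surjectivity follows the paper's own route through Example \ref{daggerrat}, the example following Example \ref{badex}, and Proposition \ref{limit}. The full-faithfulness step in (2), however, contains a genuine error. You identify $\varprojlim_h\varinjlim_k\Hom(X'_k,X_h)$ with $\Hom(\Spa(R',R'^{+}),\Spa(R,R^{+}))$ ``by (4)''. But (4) only computes maps \emph{into} the weak limit from a \emph{fixed} affinoid with bounded plus-ring; it does not give $\varinjlim_k\Hom(X'_k,X_h)\cong\Hom(\Spa(R',R'^{+}),X_h)$, and that identification is false: a morphism $\Spa(R',R'^{+})\to X_h$ corresponds to a continuous map $\hat{R}_h\to\hat{R}'$ carrying $\hat{R}_h^{\circ}$ into the \emph{completion} of $R'^{+}$, and the coordinates may land on power-bounded elements of $\hat{R}'$ that are not overconvergent, so such a map need not factor through any $X'_k$. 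Worse, since the completion of $R^{+}$ is the minimal ring of integral elements (the integral closure of $K^{\circ}+\hat{R}^{\circ\circ}$), the condition you extract at the end is automatic at the level of adic spaces, so $\Hom(\Spa(R',R'^{+}),\Spa(R,R^{+}))$ is nothing but $\Hom(\hat{X}',\hat{X})$; this is precisely the warning in the Remark immediately following the proposition, and your chain of identifications would make dagger morphisms coincide with arbitrary morphisms of the completions, contradicting the faithful-but-not-full Remark after Definition \ref{defdag}. The paper instead deduces full faithfulness from Proposition \ref{daggerballrep}: a morphism of presentations is pinned down by the images of the coordinates, which must lie in $\hat{R}'^{\circ}\cap R'$, and this overconvergence constraint is exactly what your argument loses.

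A secondary point concerns (3): your appeal to the universal property of the compactification is oriented the wrong way. Granting that $\Spa(R,R^{+})$ is partially proper over $K$, the universal property produces a map $\hat{X}^{\cp}\to\Spa(R,R^{+})$ under $\hat{X}$, which on plus-rings gives $R^{+}\subset\hat{R}^{+}$ --- the inclusion you had already obtained from Corollary \ref{berkform} --- and not $\hat{R}^{+}\subset\overline{R^{+}}$. The missing inclusion is in fact formal and needs no properness: the completion of $R^{+}$ is an open, integrally closed subring of $\hat{R}$ containing $K^{\circ}$, hence contains every topologically nilpotent element of $\hat{R}$ and therefore the integral closure of $K^{\circ}+\hat{R}^{\circ\circ}$. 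Together with the example following Example \ref{badex}, this is all that is required, and it is how the paper reduces (3) and (4) to Proposition \ref{limit}.
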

	
	\begin{proof}
		The first claim follows from Example \ref{daggerrat}. The fully faithfulness of the functor in the second claim follows from Remark \ref{daggerballrep}. Its essential surjectivity is immediate: fix a dagger algebra $R$ and an integer $H$ such that  $R=K\langle\underline{\tau}\rangle^\dagger/(a_i)$ with $a_i\in K\langle \pi^{1/H}\underline{\tau}\rangle$. Then $R$ is the image of the sequence $X_{h+1}\Subset X_h$ with $X_h=\Spa K\langle \pi^{1/(H+h)}\underline{\tau}\rangle /(a_i)$. The last two claims follow from Proposition \ref{limit}.
	\end{proof}

	\begin{rmk}
		Even though $\hat{X}^{\cp}$ is a (weak) inverse limit of the spaces ${X}_h$ as adic space, morphisms between two presentations of dagger spaces $X $ and $Y$   do not coincide in general  with morphisms between $\hat{X}^{\cp}$ and $\hat{Y}^{\cp}$ as the latter coincide with morphisms from $\hat{X}$ to $\hat{Y}$.
	\end{rmk}
	
	We can also promote the equivalence of categories between dagger spaces and their presentations to an equivalence of topoi, using the following definitions.

	\begin{dfn}
		Let $\mathbf{P}$ be a property of morphisms of rigid spaces.
		We say that a morphism of pro-rigid spaces $\phi \colon X \ra Y  $ has the property $\mathbf{P}$ if  $X \cong\varprojlim X_h$, $Y^\dagger\cong\varprojlim Y_h $ and $\phi=\varprojlim \phi_h$ with  $\phi_h\colon X_h\ra Y_h$ having the property $\mathbf{P}$.  
		We say that a collection of open morphisms of pro-rigid spaces $\{\phi_i\colon \varprojlim_h U_{ih}\ra X \}_{i\in I}$ is a \emph{ cover} if $X\Subset \bigcup_i\im(U_{ih})$ for all $h$.
	\end{dfn}

	\begin{rmk}
		In particular, we have defined open immersions, smooth and \'etale morphisms of presentations of affinoid dagger spaces. In that case, as the morphisms $\hat{X}\subset X_h$ are open immersions (hence \'etale) we deduce that if a morphism $X \ra Y $ is an open immersion [resp. smooth resp. \'etale] then the associated morphism $\hat{X}\ra\hat{Y}$ also is.
	\end{rmk}

	\begin{rmk}
		The topology induced by  covers of open immersions is  generated by  covers of rational embeddings.
	\end{rmk}
	
	\begin{prop}\label{dagcovpre}
		Any  family of \'etale maps of presentations of affinoid dagger spaces $\{\varprojlim_h U_{ih}\ra\varprojlim_h X_h\}_{i\in I}$  inducing a covering of $\hat{X}$   is  a covering.
	\end{prop}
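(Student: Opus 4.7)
My plan is to verify the covering condition through a three-step argument: a finiteness reduction, the construction of an open neighborhood $V_h$ at each level, and a Berkovich-topological step to produce the required relative compactness.

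First, by quasi-compactness of $\hat{X}$ and the assumption that $\{\hat{U}_i\to \hat{X}\}_{i\in I}$ is an \'etale cover, I would pass to a finite subcover and assume $I=\{1,\ldots,n\}$. At each level $h$, the \'etale morphism $\phi_{ih}\colon U_{ih}\to X_h$ is open, so $V_h:=\bigcup_{i=1}^{n}\phi_{ih}(U_{ih})$ is an open subspace of $X_h$; the covering hypothesis at the limit forces $\hat{X}\subseteq V_h$, since each point of $\hat{X}$ lifts to some $\hat{U}_i\subseteq U_{ih}$.

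The main step is to upgrade the set-theoretic inclusion $\hat{X}\subseteq V_h$ to the relative compactness $\hat{X}\Subset V_h$ that the covering definition demands. My plan is to pass to the Berkovich space via the continuous quotient map $\Berk\colon X_h\to X_h^{\Berk}$. Since $X_h$ is affinoid and taut, $X_h^{\Berk}$ is compact Hausdorff, and both $\hat{X}$ and $V_h$ are saturated under $\Berk$, as they are unions of rational subspaces which depend only on rank-one valuations. Thus $\hat{X}^{\Berk}$ is a compact subset of $X_h^{\Berk}$ contained in the open subset $V_h^{\Berk}$, and by normality of compact Hausdorff spaces there exists an open $W^{\Berk}$ with $\hat{X}^{\Berk}\subseteq W^{\Berk}\subseteq\overline{W^{\Berk}}\subseteq V_h^{\Berk}$. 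Pulling back via continuity of $\Berk$ yields an open $W\subseteq X_h$ with $\hat{X}\subseteq W$ and $\overline{W}\subseteq\Berk^{-1}(\overline{W^{\Berk}})\subseteq V_h$, so the topological closure of $\hat{X}$ in $X_h$ lies in $V_h$, giving $\hat{X}\Subset V_h$ (equivalently, by Proposition \ref{berkint}, $\hat{X}^{\Berk}$ lies in the Berkovich interior of $V_h^{\Berk}$).

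The main obstacle is precisely this last step: the hypothesis provides only a pointwise surjectivity at the limit, whereas $\Subset$ requires control on the topological closure at every finite level $h$. The Berkovich approach circumvents this difficulty by reducing the problem to normality of compact Hausdorff spaces, exploiting the saturation of rational opens under the rank-one reduction map and the characterization of $\Subset$ through Berkovich interiors supplied by Proposition \ref{berkint}.
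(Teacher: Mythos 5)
There is a genuine gap, and it is visible already from the shape of your argument: you never use the hypothesis that the sources $\varprojlim_h U_{ih}$ are \emph{presentations} of dagger structures, i.e.\ that $\hat{U}_i\Subset U_{ih}$. Without that hypothesis the statement is false: the constant system $U_h=\hat{X}$, mapped into $X_h$ by the open immersion, induces a covering of $\hat{X}$ set-theoretically, yet it is not a covering because $\hat{X}\not\Subset\hat{X}$ (an affinoid is never contained in its own interior over $K$). So any proof that only uses set-theoretic surjectivity at the limit, as yours does, must be proving too much. Concretely, the flaw is in the ``upgrade'' step. First, rational subsets and images of \'etale maps are \emph{not} saturated for the quotient $\Berk\colon X_h\ra X_h^{\Berk}$: a condition $|f|\leq|g|$ is not determined by the maximal (rank-one) generization of a point (e.g.\ the rank-two point of $X_1$ just outside $\B^1$ has the Gauss point as generization), so $V_h^{\Berk}$ need not be open in $X_h^{\Berk}$ and $\Berk^{-1}(\overline{W^{\Berk}})\subseteq V_h$ fails. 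Second, even granting the topology, the normality argument is vacuous ($\hat{X}^{\Berk}$ is compact, hence already closed in the Hausdorff space $X_h^{\Berk}$) and, more importantly, containment of the closure of $\hat{X}^{\Berk}$ in $V_h^{\Berk}$ is not the condition $\hat{X}\Subset V_h$: by Proposition \ref{berkint} the latter means $\hat{X}^{\Berk}\subseteq\Int(V_h^{\Berk}/\mathcal{M}(K))$, the Berkovich \emph{relative} interior over the base, which is a boundedness condition intrinsic to $V_h$ and not the topological interior of $V_h^{\Berk}$ inside $X_h^{\Berk}$. Topological containment in an ambient space says nothing about it (again $\B^1\subseteq\B^1$ is the basic counterexample).

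The missing idea is the one the paper's proof isolates: use $\hat{U}_i\Subset U_{ih}$ and show that \'etale maps preserve this relation, namely if $U\Subset U_1$ and $f\colon U_1\ra X_1$ is \'etale then $f(U)\Subset f(U_1)$. This is proved by reducing, via \cite[Proposition 2.5.17]{berkovich}, to the cases of an open immersion (clear) and of a finite \'etale map, where \cite[Proposition 2.5.8(iii), Corollary 2.5.13(i)]{berkovich} give $\Int(U_1)=f^{-1}\Int(X_1)$, hence $U^{\Berk}\subseteq\Int(U_1)$ forces $f(U)\Subset f(U_1)$. Applying this to each $\phi_{ih}$ and taking a finite subfamily (your quasi-compactness reduction is fine and can be kept) yields $\hat{X}\Subset\bigcup_i\im(U_{ih})$ for all $h$. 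If you want to salvage a Berkovich-interior formulation, you must combine a relative statement over $X_h$ with the presentation property $\hat{X}\Subset X_h$ via Corollary \ref{berkform}; pure point-set topology of the quotient cannot produce the relation $\Subset$.
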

	
	\begin{proof}
		It suffices to show that if $U\Subset U_1$ and $f\colon U_1\ra X_1$ is an \'etale map, then $f(U)\Subset f(U_1)$. By \cite[Proposition 2.5.17]{berkovich} we can consider only the case in which $f$ is an open immersion, which is clear, and the case in which $f$ is  finite \'etale, which we now examine.
		
		Since $U_1\ra f(U_1)$ is finite, we deduce from \cite[Proposition 2.5.8(iii) and Corollary 2.5.13(i)]{berkovich} that $\Int(U_1)=f^{-1}\Int(X_1)$. Therefore since $U\Subset U_1$ we deduce $f(U)\Subset f(U_1)$ as wanted.
	\end{proof}

	\begin{cor}\label{eqtop2}
		Let $X $ be an affinoid dagger variety with a presentation $\varprojlim X_h$ and limit $\hat{X}$. 
		The maps of the small rational and \'etale sites $\hat{X}\ra X \ra\varprojlim X_h$ induces  equivalences on the associated topoi.
	\end{cor}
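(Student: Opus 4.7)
The plan is to decompose the chain $\hat X \to X \to \varprojlim X_h$ and verify each arrow separately; since composition preserves equivalences of topoi, this suffices. The first arrow $\hat X \to X$ is handled immediately by Corollary \ref{eqtop}, which already gives the equivalence of topoi for both the rational and the \'etale topologies. All the new work concerns the second arrow $X \to \varprojlim X_h$, and my strategy is to combine the category-level equivalence of Proposition \ref{moraffdag}(\ref{moraffdag1}) with the topological compatibility of Proposition \ref{dagcovpre}, then invoke the criterion of \cite[Appendix A]{huber} exactly as in the proof of Corollary \ref{eqtop}.

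I will first upgrade the equivalence of Proposition \ref{moraffdag}(\ref{moraffdag1}) to the level of the small rational and \'etale sites. For the rational site this uses Proposition \ref{eqrational} together with Example \ref{daggerrat}: a rational open of $X$ produces, after intersection with each $X_h$, a compatible presentation of a rational open over $\varprojlim X_h$, and conversely. For the \'etale site I will invoke the spreading-out argument used in the proof of Proposition \ref{dagcov1}(2) (based on \cite[Lemma 7.5]{scholze}) together with Remark \ref{fet}: any \'etale dagger map $U \to X$ arises, up to shifting indices, from a compatible system of \'etale maps $U_h \to X_h$ whose pro-limit recovers $U$.

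It then remains to match the two Grothendieck topologies. A cover on the presentation side is, by definition, a family $\{U_i \to \varprojlim X_h\}$ with $X \Subset \bigcup_i \im(U_{ih})$ for every $h$. In one direction, Proposition \ref{dagcovpre} shows that such a family, reinterpreted as a family of \'etale maps of dagger spaces, is an \'etale cover of $X$. For the converse, given an \'etale cover of $X$ (which by definition induces a surjection onto $\hat X$) I will first pick a common index $h_0$ at which all the \'etale components are defined; from $\hat X \Subset_{X_1} X_h$ and the coinitiality of the $X_h$ under $\Subset$ among rational neighborhoods of $\hat X$ in $X_1$ (Remark \ref{dagislim2}, Proposition \ref{cofinal}), I will then deduce that the $\Subset$-covering condition propagates to every level $h$. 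With the sites and topologies identified, the Huber criterion \cite[Appendix A]{huber} applied as in Corollary \ref{eqtop} yields the desired equivalence of topoi.

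The main obstacle I anticipate is this last topology-matching step, and specifically the converse direction: the $\Subset$ condition is strictly stronger than mere surjectivity, so one has to combine the fact that $\hat X$ lies in the interior of each $X_h$ with the cofinality of the presentation to propagate surjection onto $\hat X$ into $\Subset$-coverings at every level. Everything else in the argument, in particular the compatibility of the rational and \'etale site identifications with the formation of limits, reduces to careful bookkeeping from the results assembled earlier in the appendix.
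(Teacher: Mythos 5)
Your overall route is the paper's: its proof of this corollary is nothing more than an appeal to the criterion of \cite[Appendix A]{huber} after the preparatory results of the appendix, and your decomposition through Corollary \ref{eqtop} together with the identification of objects of the sites via Proposition \ref{moraffdag}(\ref{moraffdag1}), Proposition \ref{dagcov1} and Remark \ref{fet} is exactly what that one-line proof presupposes. The flaw is in your treatment of the covers, where you have swapped the two directions of Proposition \ref{dagcovpre}. That proposition does not assert that a presentation-level cover (the condition $\hat{X}\Subset\bigcup_i\im(U_{ih})$ for all $h$) gives an \'etale cover of the dagger space; it asserts precisely the opposite: a family of \'etale maps of presentations which merely induces a covering of $\hat{X}$ already satisfies the $\Subset$-condition at every level. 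So the ``converse'' you set out to prove by hand is literally the statement of Proposition \ref{dagcovpre}, while the direction you attribute to it is not what it says (and is in any case the less delicate half of the comparison, handled through the pro-limit description of Proposition \ref{limit}).

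This matters because your substitute argument for that converse does not work as sketched. Knowing $\hat{X}\Subset_{X_1}X_h$ and the coinitiality of the system $\{X_h\}$ only provides strict neighborhoods of $\hat{X}$ inside the base; the condition to be verified concerns the images of the \'etale maps $U_{ih}\ra X_h$. From the surjectivity of $\bigsqcup\hat{U}_i\ra\hat{X}$ you must deduce $\im(\hat{U}_i)\Subset\im(U_{ih})$, i.e.\ that an \'etale map carries the strict inclusion $\hat{U}_i\Subset U_{ih}$ to a strict inclusion of images, and this is not a formal consequence of coinitiality. It is exactly what the proof of Proposition \ref{dagcovpre} establishes, by reducing to open immersions and finite \'etale maps and invoking the Berkovich interior results (Proposition \ref{berkint}, Corollary \ref{berkform}, giving $\Int(U_1)=f^{-1}\Int(X_1)$ in the finite \'etale case). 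The repair is immediate---cite Proposition \ref{dagcovpre} for this direction instead of re-deriving it---but as written that step of your plan would fail.
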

	
	\begin{proof}
		It suffices to use the criteria of \cite[Appendix A]{huber}.
	\end{proof}
	
	\begin{rmk}
		The content of the previous proposition may seem to clash with the result of Huber \cite[Proposition 2.4.4]{huber} giving an equivalence between the \'etale topos of $\hat{X}^{\cp}$ and the direct limit topos $\varprojlim  \Sh_{\et}(X_h)$. The point is that the \'etale site $\varinjlim X_{h,\et}$ giving rise to the direct limit topos $\varprojlim \Sh_{\et}(X_h)$ is not equivalent to the \'etale site of the presentation. As a hint of this fact, consider for example the constant system $U_h=\hat{X}$ which is not a presentation of a dagger structure and hence does not  define an open of $\hat{X}  $. 
	\end{rmk}
	
	In order to be consistent with the ``pro-objects" approach, we also introduce dagger spaces with a functorial perspective. We recall that the rational topology on dagger affinoid spaces is sub-canonical, that is,  the presheaf  $\mcF_X$ over $\Aff^\dagger$ represented by an affinoid dagger variety $X$ is a sheaf (this follows from Proposition \ref{eqrational}). By abuse of notation, we sometimes denote it by $X$.
	
	\begin{dfn}
		A morphism $\mcF\ra\mcG$ of sheaves of sets over $\Aff^\dagger$ with the rational topology \emph{has the property $\mathbf{P}$ }if for any morphism $X \ra \mcG$ from a representable one, the pull-back is representable and the morphism $\mcF\times_{\mcG}X \ra X $ has the property $\mathbf{P} $. A collection of morphisms $\{\mcF_i\ra\mcG\}$ is a \emph{cover} if $\bigsqcup\mcF_i\ra\mcG$ is an epimorphism of sheaves. A [\emph{smooth}] \emph{functorial dagger space} is a sheaf $\mcF$ over $\Aff^\dagger$ with a cover of open immersions $\{U_i\ra\mcF\}$ where each $U_i$ is represented by a [smooth] affine dagger space.
	\end{dfn}

	\begin{rmk}
		The category of functorial dagger spaces has fiber products, generalizing fiber products of affinoid dagger spaces.
	\end{rmk}

	\begin{rmk}
		\'Etale and open covers define a topology on functorial dagger spaces. 
	\end{rmk}

	The functor $l\colon \Aff^\dagger\ra\Aff$ induced by $X \mapsto \hat{X}$ is continuous. If we embed the category of affinoid varieties in  the category of locally ringed spaces $\LRS$, we therefore obtain a left adjoint Kan extension functor:
	$$|\cdot|\colon\Sh(\Aff^\dagger)\rightleftarrows\LRS$$
	such that $|X |=(|\hat{X}|,\mcO_{\hat{X}})$ for any $X\in \Aff^\dagger$ (we denote by $X$ also the sheaf  represented by $X$).
	
	\begin{prop}
		Let $\mcF$ be a [smooth] functorial dagger space with an open cover by dagger affinoid spaces $\{U_i \ra\mcF\}$. 
		The ringed space $|\mcF|$ is a [smooth] rigid analytic space covered by $\hat{U}_i$ and endowed with an extra sheaf $\mcO^\dagger$ such that $\mcO^\dagger|_{\hat{U}_i}$ is the sheaf $\mcO^\dagger_{U_i }$ introduced in Definition \ref{Odag}.
	\end{prop}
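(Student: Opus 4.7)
The plan is to reduce the statement to the affinoid case already handled, using the open cover, and then invoke the canonicity of the dagger structure on rational opens (Proposition \ref{eqrational}) to obtain a coherent gluing.

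First, I would unpack $|\mcF|$. Since $|\cdot|$ is the left Kan extension of the completion functor $l\colon\Aff^\dagger\ra\LRS$ along the Yoneda embedding, and $\mcF$ is presented by the cover $\{U_i\to\mcF\}$, the object $|\mcF|$ is computed as the colimit in $\LRS$ of the diagram formed by the $\hat{U}_i$ together with the fiber products $U_i\times_{\mcF}U_j$. By the defining property of open immersions of functorial dagger spaces, each $U_i\times_{\mcF}U_j$ is representable by a dagger affinoid $U_{ij}$ that is an open subspace of both $U_i$ and $U_j$. The remark following the definition of properties of pro-rigid morphisms gives that each $\hat{U}_{ij}\hookrightarrow\hat{U}_i$ is an open immersion of rigid analytic spaces, and the cocycle conditions for the gluing of $|\mcF|$ as a rigid analytic space are inherited from those already present in the sheaf $\mcF$ on $\Aff^\dagger$, using that the functor $\Spa^\dagger R\mapsto\Spa\hat{R}$ is faithful.

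Next, I would build the extra sheaf $\mcO^\dagger$ by gluing. On each chart $\hat{U}_i$ we have the dagger structure sheaf $\mcO^\dagger_{U_i}$ of Definition \ref{Odag}. The crucial point is the canonicity clause in Proposition \ref{eqrational}: on any rational open $V$ of $\hat{U}_i$, the induced dagger structure does not depend on the presentation $U(f_k/g)$ chosen for $V$. When $\hat{U}_{ij}$ is viewed as a rational open of both $\hat{U}_i$ and $\hat{U}_j$, the two restrictions $\mcO^\dagger_{U_i}|_{\hat{U}_{ij}}$ and $\mcO^\dagger_{U_j}|_{\hat{U}_{ij}}$ therefore both equal the canonical dagger structure $\mcO^\dagger_{U_{ij}}$. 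This yields a well-defined sheaf $\mcO^\dagger$ on $|\mcF|$ whose restriction to each $\hat{U}_i$ is $\mcO^\dagger_{U_i}$, which is precisely the structure required by Definition \ref{dagsp}. Smoothness is local on the cover, hence passes from the $U_i$ (which are smooth dagger affinoids by hypothesis) to $|\mcF|$ via the completions $\hat{U}_i$.

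The step I expect to be slightly delicate is verifying that the triple overlap cocycle for $\mcO^\dagger$ actually holds, not merely the double-overlap agreement. However, this too reduces to Proposition \ref{eqrational}: on the triple intersection $\hat{U}_{ijk}$, each of the three restrictions coincides with the single canonical dagger structure $\mcO^\dagger_{U_{ijk}}$ carried by the rational open, and the cocycle condition becomes the identity. Everything else in the argument, in particular the recognition that the colimit of locally ringed spaces computed by the Kan extension agrees with the rigid analytic gluing, is formal once one knows that the rational topology on $\Aff^\dagger$ is subcanonical (as noted just before the definition of functorial dagger spaces).
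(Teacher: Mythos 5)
Your argument is correct and follows essentially the same route as the paper: present $\mcF$ as a coequalizer of representable sheaves along open immersions, use that the left Kan extension $|\cdot|$ preserves colimits to glue the $\hat{U}_i$ in $\LRS$, and then define $\mcO^\dagger$ chartwise. The extra care you take with the double and triple overlaps via the canonicity clause of Proposition \ref{eqrational} is precisely the detail the paper leaves implicit in its closing sentence, so there is nothing to object to.
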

	
	\begin{proof}
		By \cite[IV.7.3 and A.1.1]{mamo} we can write a coequalizing diagram of sheaves
		$$
		\bigsqcup (U_i \times_{\mcF}U_j )\rightrightarrows\bigsqcup U_i \ra\mcF\ra0
		$$
		whose arrows on the left are open immersions of representable sheaves. By applying the left adjoint functor $|\cdot|$ 
		we deduce that $|\mcF|$ is obtained by gluing the analytic spaces $\hat{U}_i$ over open immersions and therefore is an analytic space. The sheaf $\mcO^\dagger$ can be  defined using the formula in the statement.
	\end{proof}
	
	\begin{rmk}\label{covcov}
		In particular, we proved that a morphism of affine dagger spaces $U \ra X $ is open or rational if an only if the associated map of representable sheaves is, and that a collection $\{U_i \ra X \}$ of morphisms of affine dagger spaces is an open cover if and only if the induced collection of morphisms of sheaves is.
	\end{rmk}

	We conclude that the functor $|\cdot|$ factors over the category of dagger spaces defined by Gro\ss e-Kl\"onne \cite{gk-over}.
	
	\begin{cor}
		The category of functorial dagger spaces is equivalent to the category of dagger spaces of Definition \ref{dagsp}, and this equivalence preserves rational/open immersions and covers.
	\end{cor}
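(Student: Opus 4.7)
The plan is to build an inverse functor $\Phi$ to $|\cdot|$ by sending a Gro\ss e-Kl\"onne dagger space $X=(\hat X,\mcO^\dagger)$ to its functor of points on $\Aff^\dagger$, namely $\Phi(X)(V)=\Hom(V,X)$ where morphisms are taken in the locally-ringed-space sense of Definition \ref{dagsp}. I would then verify that $\Phi(X)$ is a functorial dagger space, that $\Phi\circ|\cdot|$ and $|\cdot|\circ\Phi$ are naturally isomorphic to the identities, and finally read off the preservation of rational/open immersions and covers.

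First I would check that $\Phi(X)$ is a sheaf on $\Aff^\dagger$ for the rational topology, which reduces to the rational sheaf property of $\mcO^\dagger$ in Proposition \ref{eqrational}. Then, fixing an affinoid cover $\{U_i\to X\}$ witnessing the dagger structure of $X$, I would show that the induced sheaf morphisms $\Phi(U_i)\hookrightarrow\Phi(X)$ are open immersions and that $\bigsqcup\Phi(U_i)\to\Phi(X)$ is an epimorphism. For a test morphism $V\to X$, the preimage of $\hat U_i$ is a rational open $\hat W_i\subset\hat V$, which inherits a canonical dagger structure $W_i$ from $V$ by Proposition \ref{eqrational}; this $W_i\hookrightarrow V$ represents $\Phi(U_i)\times_{\Phi(X)}V$, the $\hat W_i$ cover $\hat V$, and by Remark \ref{covcov} the family $\{W_i\to V\}$ is then a cover in $\Aff^\dagger$.

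For the two compositions, $|\Phi(X)|\cong X$ follows from the preceding proposition, which glues the $\hat U_i$'s along the fibered products $U_i\times_X U_j$ and so reproduces $(\hat X,\mcO^\dagger)$ as given in Definition \ref{dagsp}. The unit $\mcF\to\Phi(|\mcF|)$ is provided by the adjunction making $|\cdot|$ a left Kan extension; Proposition \ref{moraffdag}(\ref{moraffdag1}) shows it is an isomorphism on representables, and this isomorphism glues along the defining cover of $\mcF$. The final assertion of the corollary is then a direct consequence of Remark \ref{covcov}, which identifies open/rational immersions and covers of affinoid dagger spaces with those of their associated representable sheaves, combined with the locality of these notions.

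The main obstacle I foresee is the open-immersion claim in step two: one has to verify that $\Phi(U_i)\times_{\Phi(X)}V$ is genuinely representable by a rational open of $V$, and not merely a subsheaf, which forces a careful translation between Gro\ss e-Kl\"onne's locally-ringed-space definition of morphisms and the sheaf-theoretic one via pullback of the structure sheaf $\mcO^\dagger$. Everything else amounts to formal gluing once Proposition \ref{moraffdag} and the preceding proposition are in hand.
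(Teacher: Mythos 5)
Your proposal is correct and follows essentially the same route as the paper: the inverse is the functor of points $X\mapsto\mcF_X$ on $\Aff^\dagger$, the covering $\{U_i\to\mcF_X\}$ is checked to be an epimorphism of open immersions by pulling back to representables and invoking Remark \ref{covcov}, and the quasi-inverse property and the preservation statement are read off from the preceding proposition and Remark \ref{covcov}, exactly as in the paper. The only slip is that the preimage of $\hat U_i$ in a test affinoid $\hat V$ is in general only a quasi-compact open (a finite union of rational subsets) rather than a single rational one, but refining by a rational cover fixes this, and the paper's own proof glosses over the same point by simply appealing to right-adjointness and the Yoneda lemma.
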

	
	\begin{proof}
		If $X$ is a dagger space with a dagger affinoid covering $\{U_i\ra X\}$ we can consider the sheaf $\mcF_X$ it represents on the category of affinoid dagger spaces. Since the functor $X\mapsto\mcF_X$ is a right adjoint, it preserves intersections. We then conclude by Yoneda Lemma and Remark \ref{covcov} that $\{\bigsqcup U_i\ra \mcF_X\}$ is an epimorphism of open immersions so that $\mcF_X$ is a functorial dagger space. It is immediate to see that $X\mapsto\mcF_X$ and $\mcF\mapsto|\mcF| $ are quasi-inverse functors. The second part of the statement follows from Remark \ref{covcov}.
	\end{proof}
	
		\section*{Acknowledgements}
	This paper was been written during my stay at the IRMAR of Rennes, funded by the Lebesgue Center of Mathematics. I am grateful to the hosting organizations for their support. 
	I  thank Jean-Yves Etesse for pointing out some relevant results in the literature. 
	I also  express my gratitude to Joseph Ayoub, Fr\'ed\'eric D\'eglise, Elmar Gro\ss e-Kl\"onne, Bernard Le Stum and an anonymous referee for their precious remarks on earlier versions of this work.

 \end{document}